\documentclass[11pt]{amsart}
\usepackage{geometry}     
\usepackage[parfill]{parskip}    
\usepackage{graphicx}
\usepackage{amssymb}
\usepackage{epstopdf}
\usepackage{xcolor}
\usepackage{fullpage}
\usepackage{tikz-cd}
\usepackage{mathrsfs}
\usepackage[all, 2cell]{xy}
\usepackage{hyperref}
\usepackage{marginnote}

\theoremstyle{plain}
\newtheorem{theorem}{Theorem}[section]
\newtheorem*{theorem*}{Theorem}

\newtheorem{lemma}[theorem]{Lemma}
\newtheorem{corollary}[theorem]{Corollary}
\newtheorem{proposition}[theorem]{Proposition}

\newtheorem{remark}[theorem]{Remark}

\theoremstyle{definition}
\newtheorem{example}[theorem]{Example}

\newcommand{\B}{\mathcal{B}}
\newcommand{\C}{\mathcal{C}}
\newcommand{\D}{\mathrm{D}}
\newcommand{\E}{\mathcal{E}}
\newcommand{\F}{\mathcal{F}}
\newcommand{\G}{\mathcal{G}}

\newcommand{\M}{\mathcal{M}}
\newcommand{\N}{\mathcal{N}}
\renewcommand{\L}{\mathcal{L}}

\newcommand{\R}{\mathcal{R}}
\newcommand{\T}{\mathcal{T}}
\newcommand{\U}{\mathcal{U}}
\newcommand{\V}{\mathcal{V}}

\newcommand{\X}{\mathcal{X}}
\newcommand{\Y}{\mathcal{Y}}
\newcommand{\Z}{\mathcal{Z}}

\renewcommand{\t}{\mathsf{t}}
\newcommand{\f}{\mathsf{f}}

\newcommand{\fp}[1]{{\mathsf{fp}}({#1})}
\renewcommand{\c}{\mathrm{c}}

\newcommand{\Mod}[1]{\mathrm{Mod}#1}

\renewcommand{\mod}[1]{\mathrm{mod}#1}

\newcommand{\Cogen}[1]{\mathrm{Cogen}(#1)}

\newcommand{\Der}[1]{\mathrm{D}(#1)}

\newcommand{\K}[1]{\mathrm{K}^{[0,1]}(\mathrm{Inj}#1)}

\newcommand{\Prod}[1]{\mathrm{Prod}(#1)}
\newcommand{\add}[1]{\mathrm{add}(#1)}
\newcommand{\Kb}[1]{\mathrm{K}^{\mathrm{b}}(\mathrm{Inj}#1)}

\newcommand{\filt}[1]{\mathrm{filt}(#1)}

\newcommand{\y}{\mathrm{y}}

\newcommand{\Hom}[1]{\mathrm{Hom}_{#1}}

\newcommand{\Ext}[1]{\mathrm{Ext}_{#1}}
\newcommand{\End}[1]{\mathrm{End}_{#1}}

\newcommand{\tstr}[1]{\mathbb{T}_{#1}}
\newcommand{\ststr}[1]{\mathbb{D}_{#1}}
\newcommand{\heart}[1]{\mathcal{H}_{#1}}
\renewcommand{\H}[1]{\mathrm{H}^0_{#1}}

\newcommand{\Zg}[1]{\mathbf{Zg}(#1)}
\newcommand{\open}[1]{\mathscr{O}(#1)}

\newcommand{\ZgInt}[1]{\mathbf{Zg}^{[0,1]}(#1)}

\newcommand{\Sp}[1]{\mathbf{Sp}(#1)}
\newcommand{\tors}[1]{\mathbf{tors}(#1)}
\newcommand{\ftors}[1]{\mathbf{f}\mathbf{-tors}(#1)}
\newcommand{\Tors}[1]{\mathbf{f}\mathbf{-Tors}(#1)}
\newcommand{\Cosilt}[1]{\mathbf{2}\mathbf{-Cosilt}(#1)}
\newcommand{\CosiltZg}[1]{\mathbf{MaxRigid}(#1)}

\newcommand{\ClRigid}[1]{\mathbf{ClRigid}(#1)}

\newcommand{\Ann}[1]{\mathrm{Ann}(#1)}
\newcommand{\brick}[1]{\mathbf{brick}(#1)}

\newcommand{\Tpair}[2]{(\mathcal{#1}, \mathcal{#2 })}
\newcommand{\tpair}[2]{(\mathbf{#1}, \mathbf{#2 })}
\newcommand{\Ri}{{\mathbf{Grain}_{\rm cmi}}}
\newcommand{\srigid}{{grain}}

\title{Torsion pairs via cosilting mutation}
\author{Lidia Angeleri H\"ugel}
\address{Dipartimento di Informatica - Settore di Matematica, Universit\`a degli Studi di Verona, Strada le Grazie 15 - Ca' Vignal, I-37134 Verona, Italy} 
\email{lidia.angeleri@univr.it}
\begin{document}
\begin{abstract} For a left artinian ring $A$, we study the lattice $\tors{A}$ of torsion pairs   in the category  of finitely generated $A$-modules  by considering an isomorphic lattice formed by certain closed sets in a topological space associated to $A$, the Ziegler spectrum of the unbounded derived category of $\Mod{A}$. Torsion pairs in $\tors{A}$ turn out to be adjacent if and only if the associated closed sets are related by an operation which is induced by  mutation of cosilting complexes. We describe this operation from several perspectives and present a number of applications in the case when $A$ is a finite dimensional algebra.
\end{abstract}

\def\theequation{\Alph{equation}}
\maketitle
\setcounter{tocdepth}{1}

\section{Introduction}
The torsion pairs in the category $\mod{A}$ of finite dimensional modules over a finite dimensional algebra $A$ form a complete lattice $\tors{A}$ which encodes essential information on $A$. Another important measure for the complexity of the category $\mod{A}$ is given by the set  $\brick{A}$ of isomorphism classes of finite dimensional bricks, i.e.~modules whose endomorphism ring is a skew-field.  

A fundamental  tool for studying $\tors{A}$ and   $\brick{A}$ and their interrelationship  is provided by silting theory. It was shown by Adachi, Iyama and Reiten in \cite{AdachiIyamaReiten:14} that the poset $\ftors{A}$ formed by the  functorially finite torsion pairs is isomorphic to the poset of compact 2-term silting complexes, and torsion pairs are adjacent in $\ftors{A}$ if and only if the associated silting complexes are related by   the operation of silting mutation introduced by Aihara and Iyama in \cite{AiharaIyama}. Moreover,  compact 2-term silting complexes can be represented in the module category by  pairs $(M,P)$  formed by a $\tau$-rigid module $M$ and a projective module $P$.  The $\tau$-rigid modules also provide a link to the collection of bricks:
a result by Demonet, Iyama and Jasso \cite{DemonetIyamaJasso:19} establishes a bijection between  indecomposable $\tau$-rigid modules and   bricks $B$ having the property that the smallest torsion class in $\mod{A}$ containing  $B$ is functorially finite. 

The aim of these notes is to  lift these finiteness conditions and describe the whole lattice $\tors{A}$ and the entire collection $\brick{A}$  in terms of large silting theory. It is more convenient, however, to work with the dual concept of a cosilting complex, since we can then take advantage of the fact that cosilting complexes are pure-injective and work in the Ziegler spectrum, a topological space associated to $A$. 

We will establish a one-one-correspondence between torsion pairs in $\mod{A}$, equivalence classes of 2-term cosilting complexes,  and certain closed  sets    in the Ziegler spectrum of the derived category of  $\Mod{A}$, called \textit{maximal rigid sets}.    
   Torsion pairs are adjacent in $\tors{A}$ if and only if the associated cosilting complexes are related by the operation of (irreducible) cosilting mutation studied in \cite{ALSV}, which extends the silting mutation from \cite{AiharaIyama}.
 Note that, under the correspondence above, functorially finite torsion pairs correspond to maximal rigid  sets formed by finitely many 2-term complexes of finite dimensional injective modules.
    
Let us look at cosilting mutation from a different perspective. A well-known  result of Happel, Reiten and Smal\o\ allows us to attach a t-structure to each torsion pair. The hearts of these t-structures are locally coherent Grothendieck categories with simple objects,  and, roughly speaking, two torsion pairs $\tpair{u}{v}$ and $\tpair{t}{f}$ are adjacent in $\tors{A}$ if and only if the associated hearts $\heart{\mathfrak{u}}$ and $\heart{\mathfrak{t}}$ have a  simple object in common. To be precise, there is a brick $B$ in $\mod{A}$ which gives rise to two simple objects, one in each heart. This is precisely the brick labeling of the Hasse quiver of $\tors{A}$ studied in \cite{DemonetIyamaReadingReitenThomas:23,BarnardCarrollZhu:19}. 

The injective objects in each heart are closely related with the points in the associated maximal rigid set. The points corresponding to injective envelopes of simple objects are called \textit{neg-isolated}.
Cosilting mutation can then be interpreted within the lattice $\CosiltZg{A}$ of maximal rigid sets as an operation that exchanges  neg-isolated points. More precisely,   $\tpair{u}{v}$ and $\tpair{t}{f}$ are adjacent in $\tors{A}$ if and only if the associated maximal rigid sets $\N_{\mathfrak{u}}$ and $\N_{\mathfrak{t}}$ are obtained from each other by exchanging the neg-isolated points that correspond to the injective envelopes of the simple objects arising from $B$ in each heart.

This approach can be extended to a more general situation, where the brick $B$ is replaced by a semibrick and mutation consists in exchanging collections of neg-isolated points. The notion of a \textit{wide interval}  in the lattice $\tors{A}$ introduced by Asai and Pfeifer in \cite{AP} plays a central role in this context. We will connect wide intervals with certain closed rigid sets in the Ziegler spectrum, and we will use the Ziegler topology to determine the neg-isolated points that are mutable. Indeed,  not every torsion pair in $\tors{A}$ has a neighbour, and not every point in a maximal rigid set can be mutated. This is an essential difference between large cosilting theory and $\tau$-tilting theory.

The results outlined above hold true over any left artinian ring $A$. We present three main applications for a finite dimensional algebra. The first establishes a bijection between $\brick{A}$ and certain rigid modules, extending the bijection   from  \cite{DemonetIyamaJasso:19} mentioned above. The second provides new   characterisations of brick-finiteness. The third focuses on the semistable torsion pairs studied by Asai and Iyama in \cite{Asai:21,AsaiIyama}.

The notes are  based on the papers \cite{ALSV,Sentieri:22,ALS1,ALS2} and are organised as follows. In Sections 2 and 3 we collect some preliminaries on torsion pairs and cosilting theory. The correspondence between torsion pairs and maximal rigid sets is explained in Section 4. Section 5 investigates the heart associated to a torsion pair in $\mod{A}$ and lays the foundation for Section 6, where we introduce the neg-isolated points of a maximal rigid set. Section 7 discusses the operation of mutation. The connection between  wide intervals and closed rigid sets is the topic of Section 8. Finally, Section 9 is devoted to applications.

{\bf Notation.}  Throughout the paper, $A$ will be a ring and  all subcategories will be strict and full. We write 
  $ \Mod{A} $  for the category of all left $A-$modules, 
 $ \mod{A} $  for the subcategory of  finitely presented $A-$modules, $ \Der{A} $ for the unbounded derived category of $ \Mod{A}$, and $\Kb{A}$ for the full subcategory of $\Der{A}$ consisting of  complexes of injective modules that are non-zero in only finitely many degrees.

We regard modules as stalk complexes in $ \Der{A} $ concentrated in degree zero. 
Given a class of objects $ \X$ in $\Der{A}$ (or in $\Mod{A}$),  we denote by
  $\Prod{\X} $  the subcategory of $ \Der{A} $ (or of $\Mod{A}$, respectively) formed by the objects isomorphic to direct summands of  products of objects in $ \X $. Moreover, if $I \subseteq \mathbb{Z}$, we write
 $\X^{\perp_I}$ for the subcategory of $ \Der{A} $ (or of $\Mod{A}$, respectively) consisting of the objects $ Y $ with $ \Hom{\Der{A}}(X, Y[i]) = 0 $ (respectively, $\Ext{A}^i (X,Y)=0$) for all $ X \in \mathcal{X} $ and all $i\in I$. We use the notation $>0$ for the interval $\{i\in\mathbb{Z}\mid i>0\}$ and similarly for $\leq 0$.
  The category $ {}^{\perp_I}{\X} $ is defined analogously.  
Finally, for a module $C\in\Mod{A}$ we denote by $\Cogen C$ the subcategory of $\Mod{A}$ consisting of all modules isomorphic to subobjects of products of objects in $C$.

\section{Torsion pairs}\label{tp}

Two subcategories $\T$ and $\F$ closed under direct summands of an abelian or triangulated category $ \mathcal{A} $ form  a \textbf{torsion pair} $\Tpair{T}{F}$ if
$ \Hom{\mathcal{A}}(\T, \F) = 0 $,
and every object $ A \in \mathcal{A} $ is an extension of an object $ T \in \mathcal{T}$ by an object $F \in \mathcal{F} $.
When $ \mathcal{A} $ is abelian, we  call $ \mathcal{T} $ the \textbf{torsion class} and $ \mathcal{F} $ the \textbf{torsionfree class} of the torsion pair. They determine each other as $\mathcal{F}=\mathcal{T}^{\perp_0}$ and 
$\mathcal{T}={}^{\perp_0}\mathcal{F}$. 

 Every class of $A$-modules $\mathcal{M}$  generates a torsion pair $\Tpair{\mathbf  T(\mathcal{M})}{\rm {\mathcal{M}^{\perp_0}}}$ and cogenerates a torsion pair $\Tpair{\rm{}^{\perp_0}\mathcal{M}}{\mathbf F(\mathcal{M})}$ in $\Mod{A}$.
 When $A$ is a left coherent ring  and $\mathcal{M}\subseteq\mod{A}$,   we also have  the torsion class $\tilde{\mathbf T}(\mathcal{M})$  generated by $\mathcal{M}$ in $\mod{A}$, and the torsionfree class
$\tilde{\mathbf F}(\mathcal{M})$    cogenerated by $\mathcal{M}$ in $\mod{A}$.

Torsion classes and torsionfree classes in module categories are determined by closure properties. In particular, over a left noetherian ring $A$, a full subcategory of 
 $\mod{A}$ is a  torsion class if and only if it is closed quotients and extensions. Ordering torsion pairs in $\mod{A}$ by inclusion of torsion classes then gives rise to a complete lattice that we denote by $\tors{A}$. Given two torsion pairs $\tpair{u}{v}$ and $\tpair{t}{f}$ in $\tors{A}$, we set 
 $$\tpair{u}{v}\le \tpair{t}{f}\:\text{ if }\: \mathbf{u}\subseteq \mathbf{t},$$
and  a family $(\tpair{t_i}{f_i})_{i\in I}$ in $\tors{A}$ has meet $(\bigcap_{i\in I} \mathbf{t}_i, \tilde{\mathbf F}(\bigcup_{i\in I} \mathbf{f}_i))$ and  join $(\tilde{\mathbf T}(\bigcup_{i\in I} \mathbf{t}_i), \bigcap_{i\in I} \mathbf{f}_i)$.

 Moreover, when $A$ is left noetherian, every torsion pair $\Tpair{T}{F}$ in $\Mod{A}$ restricts to a torsion pair in $\tors{A}$. Conversely, given a  torsion pair $\tpair{t}{f}$ in $\tors{A}$, we obtain  a torsion pair $\Tpair{T}{F}= (\varinjlim\mathbf t,\varinjlim\mathbf f)$ in  $\Mod{A}$ by taking direct limits. The torsion pairs in $\Mod{A}$ arising in this way are characterised by the property that the torsionfree class $\F$ is closed under direct limits and are called \textbf{torsion pairs of finite type}. They form a complete lattice under the order given by inclusion of torsion classes that here\footnote{Note that this lattice is denoted by $\mathbf{Cosilt}(A)$ in \cite{ALS1,ALS2}.} we will denote by $\Tors{A}$.
  
\begin{theorem}\label{finite type}\cite{Crawley-Boevey:94,BreazZemlicka:18,ZhangWei:17} \cite[Thm.~3.8, Cor.~3.9]{Angeleri:18}\label{CBijection}
 
 (1) If $A$ is a left noetherian ring, there is a lattice isomorphism between $\tors{A}$ and $\Tors{A}$.
 
(2)  A torsion pair $\Tpair{T}{F}$ in $\Mod{A}$ is of finite type if and only if  every left $A$-module  admits an $\F$-cover  (or in other words, a minimal right $\F$-approximation). 
\end{theorem}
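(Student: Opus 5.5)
For part (1), I would use the two assignments described just before the statement and show they are mutually inverse and order-preserving. Define $\Phi\colon \tors{A}\to\Tors{A}$ by $\tpair{t}{f}\mapsto(\varinjlim\mathbf t,\varinjlim\mathbf f)$. Define $\Psi$ in the other direction by restriction, $\Tpair{T}{F}\mapsto(\T\cap\mod{A},\F\cap\mod{A})$. Since $A$ is left noetherian, $\mod{A}$ is abelian and closed under submodules. So the torsion part $tM$ of a finitely generated $M$ is finitely generated, and the restriction is a torsion pair in $\mod{A}$.

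The first task is to check that $\Phi$ lands in $\Tors{A}$. By construction $\varinjlim\mathbf f$ is closed under direct limits. Using Crawley-Boevey's results on locally finitely presented categories, $\varinjlim\mathbf f$ is torsionfree, being closed under products, submodules and extensions. I would then identify $\varinjlim\mathbf t$ with ${}^{\perp_0}(\varinjlim\mathbf f)$. For this I would write an arbitrary module $M$ as the direct union of its finitely generated submodules $M_i$. The union $\bigcup tM_i$ of their $\mathbf t$-torsion parts lies in $\varinjlim\mathbf t$, and I expect the quotient $M/\bigcup tM_i=\varinjlim M_i/tM_i$ to lie in $\varinjlim\mathbf f$. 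This yields the torsion decomposition of $M$, and $\operatorname{Hom}(\varinjlim\mathbf t,\varinjlim\mathbf f)=0$ follows because objects of $\mathbf t$ are finitely presented.

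Next, $\Psi\Phi=\mathrm{id}$ holds because a finitely presented module in $\varinjlim\mathbf f$ is a direct summand of an object of $\mathbf f$ (the standard factorization argument), hence lies in $\mathbf f$. Conversely, $\Phi\Psi=\mathrm{id}$ on pairs of finite type amounts to showing $\F=\varinjlim(\F\cap\mod{A})$. The inclusion $\supseteq$ is exactly the finite type hypothesis. For $\subseteq$, every $F\in\F$ is the direct union of its finitely generated submodules, which lie in $\F\cap\mod{A}$ since torsionfree classes are closed under submodules. Both maps preserve inclusion of torsion classes, because inclusion of torsionfree classes is reversed and is preserved by restriction and by $\varinjlim$. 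Being order-isomorphisms between complete lattices, they are lattice isomorphisms.

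For part (2), first suppose $\F$ is closed under direct limits. Then $\F$ is closed under products and direct limits, so it is a definable-type precovering class. Every module has an $\F$-precover, for instance $M\to M/tM$, which is even a reflection. The existence of covers then follows from Enochs' theorem that a precovering class closed under direct limits is covering. Conversely, assume every module has an $\F$-cover. Here I would invoke the known result that a covering torsionfree class is closed under direct limits, via the argument of Breaz–\v{Z}emli\v{c}ka. Concretely, one shows that the cover of a direct limit of modules in $\F$ forces the canonical map from $\F$ to be split. This converse is the main obstacle. I would try first the route through the Wakamatsu-type lemma: the kernel of an $\F$-cover lies in $\F^{\perp_1}$. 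Combined with closure of $\F$ under pure submodules, this yields closure under pure epimorphic images, and hence under direct limits.
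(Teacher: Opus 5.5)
Your part (1) is correct and follows the standard restriction/direct-limit argument behind the cited result. Two remarks. First, your decomposition of $M$ via $\bigcup tM_i$ and $M/\bigcup tM_i\cong\varinjlim M_i/tM_i$, together with $\Hom{A}(\varinjlim\mathbf t,\varinjlim\mathbf f)=0$, already shows that $(\varinjlim\mathbf t,\varinjlim\mathbf f)$ is a torsion pair, so the appeal to Crawley-Boevey is not needed. Second, closure of $\varinjlim\mathbf f$ under direct limits is not literally ``by construction''. It follows from $\varinjlim\mathbf f=\mathbf t^{\perp_0}$ and the fact that the modules in $\mathbf t$ are finitely presented.

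Part (2) has a genuine gap in the forward direction. The map $M\to M/tM$ is a \emph{left} $\F$-approximation, namely the reflection into $\F$. An $\F$-precover goes the other way: it is a map $F\to M$ with $F\in\F$ through which every map from a module in $\F$ to $M$ factors. So you have not produced any precovers, and their existence is exactly the content of this implication; Enochs' theorem only upgrades precovers to covers. You need an approximation theorem that actually uses closure under direct limits. One option is El Bashir's theorem: a class closed under direct sums and direct limits, in which every object is a direct limit of objects from a fixed set, is covering. Here every $F\in\F$ is the directed union of its finitely generated submodules, and these lie in $\F$. Another option is the fact that definable classes are covering. $\F$ is definable, being closed under products, submodules and direct limits.

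For the converse, citing Breaz--\v{Z}emli\v{c}ka is in effect what the paper does, but the route you sketch does not work. Let $0\to K\to F\xrightarrow{p}M\to 0$ be pure exact with $F\in\F$, and let $g\colon C_0\to E$ be the $\F$-cover of an injective cogenerator, with kernel $C_1$.
\begin{itemize}
\item Then $\F=\Cogen{C_0}$, so $M\in\F$ if and only if every $\phi\colon M\to E$ factors through $g$.
\item Write $\phi p=g\psi$. To factor $\phi$ you must modify $\psi$ so that it vanishes on $K$, i.e.\ extend $\psi|_K\colon K\to C_1$ to $F$.
\item The obstruction to that extension lies in $\Ext{A}^1(M,C_1)$.
\end{itemize}
Wakamatsu's lemma gives $C_1\in\F^{\perp_1}$, but that is useless here because $M$ is not yet known to lie in $\F$. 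Closure of $\F$ under pure submodules does not help either. The missing ingredient is pure-injectivity of $C_1$. One first shows, as in the construction recalled in Section~3, that $C=C_0\oplus C_1$ is a cosilting module with $\F=\Cogen{C}=\C_\sigma$; that construction uses only the $\F$-cover of $E$. Then one needs the deep theorem that cosilting modules are pure-injective (Bazzoni's argument, extended by Breaz--Pop and Zhang--Wei). This is what yields closure of $\Cogen{C}$ under direct limits.
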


\begin{remark}\cite[Cor.~3.8]{BreazZemlicka:18}\label{envel} {\rm
In fact, when $\Tpair{T}{F}$ is a torsion pair of finite type,
every $M\in\Mod{A}$ also admits a short exact sequence 
$\xymatrix{0\ar[r]& M\ar[r]^{f}& N\ar[r]& \bar{N}\ar[r]& 0}$ 
where $f$ is an $\F^{\perp_1}$-envelope (i.~e.~a minimal left  $\F^{\perp_1}$-approximation), and  $\bar{N}\in\F$.
} 
\end{remark}

Torsion pairs in $\mod{A}$, even if not functorially finite,  thus give rise to approximations when we pass to the category of all modules $\Mod{A}$. This suggests that  silting theory can still provide a tool to study the lattice $\tors{A}$ when dropping   finiteness assumptions.  

\section{Cosilting complexes}\label{cosilt}
Recall that a torsion pair $\tstr{}=\Tpair{X}{Y}$ in $\Der{A}$ is a \textbf{t-structure} if $\X$ is closed under positive shifts. We  call $ \mathcal{X} $ the \textbf{aisle} and $ \mathcal{Y} $ the \textbf{coaisle} of the t-structure, and  $\heart{\tstr{}} := \Y \cap \X[-1]$ is its \textbf{heart}. It is an abelian category with a cohomological functor $\H{\tstr{}} \colon \Der{A} \to \heart{\tstr{}}$. 

The \textbf{standard t-structure}  in $\Der{A}$ will be denoted by $\ststr{} := (\D^{<0}, \D^{\geq 0})$, where 
$\D^{<0} = \{X \in \Der{A} \mid \mathrm{H}^i(X) = 0 \text{ for all } i\geq 0\}$ and  $\D^{\geq0} = \{X \in \Der{A} \mid \mathrm{H}^i(X) = 0 \text{ for all } i<0\}$.

We will also need the following construction due to Happel, Reiten and Smal{\o}.
We start with a   t-structure $\tstr{} = (\X, \Y)$  in $\Der{A}$ which is non-degenerate, i.e.~$\bigcap_{i\in\mathbb{Z}}\X[i] = \{0\} = \bigcap_{i\in\mathbb{Z}}\Y[i]$, together with
 a torsion pair  $\mathfrak t = (\T, \F)$ in the heart $\heart{\tstr{}}$.  The \textbf{(right) HRS-tilt} of $\tstr{}$ at $\mathfrak t$ is the t-structure $\tstr{\mathfrak t^-} = (\X_{\mathfrak t^-}, \Y_{\mathfrak t^-})$ given by
	\[\X_{\mathfrak t^-} = \{ X \in \Der{A} \mid \H{{\tstr{}}}(X) \in \T \text{ and } \mathrm{H}^k_{\tstr{}}(X) = 0 \text{ for all } k>0\},\]
	\[\Y_{\mathfrak t^-} = \{ X \in \Der{A} \mid \H{{\tstr{}}}(X) \in \F \text{ and } \mathrm{H}^k_{\tstr{}}(X) = 0 \text{ for all } k<0\}.\]

We are now ready to introduce cosilting complexes.  

\begin{proposition}\cite{ZhangWei:17,PsaroudakisVitoria:18,NSZ,MarksVitoria:18}
The following statements are equivalent for   $\sigma\in\Kb{A}$.
\begin{enumerate}
\item[(i)] The pair $\mathbb T_\sigma=({}^{\perp_{\leq 0}}\sigma, {}^{\perp_{> 0}}\sigma)$ is a t-structure in $\Der{A}$.
\item[(ii)]  $\Hom{\Der{A}}(\sigma^I, \sigma[1]) = 0$ for all sets $I$, and $\Kb{A}$ agrees with the smallest thick (i.e.~closed under  direct summands, extensions, and shifts)
 subcategory of $\Der{A}$ that contains $\Prod\sigma$\end{enumerate}
\end{proposition}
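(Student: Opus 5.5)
The plan is to prove both implications separately. The main tools are the standard facts about bounded complexes of injectives: $\sigma\in\Kb{A}$ is pure-injective, $\Prod{\sigma}$ lies in $\Kb{A}$ with uniformly bounded amplitude (a product of copies of $\sigma$ is again a bounded complex of injectives, since $A$ is left noetherian), and $\Der{A}$ satisfies Brown representability for the dual. I expect to need the existence of products in $\Der{A}$ and that ${}^{\perp_{\le 0}}\sigma$ and ${}^{\perp_{>0}}\sigma$ are closed under products of copies of $\sigma$ in the second variable, i.e. that $\Hom{\Der{A}}(X,\sigma^I[i])=\Hom{\Der{A}}(X,\sigma[i])^I$.

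\textbf{(i)$\Rightarrow$(ii).} Assume $\mathbb T_\sigma$ is a t-structure. I first show that $\sigma$ lies in the coaisle ${}^{\perp_{>0}}\sigma$. For this I use that $\sigma[-1]$ lies in the coaisle, since coaisles are closed under negative shifts and the coaisle equals $(\mathcal X)^{\perp_0}$. More directly, $\sigma[-1]\in ({}^{\perp_{\le0}}\sigma)^{\perp_0}$ by definition, so $\sigma[-1]\in{}^{\perp_{>0}}\sigma$, which gives $\Hom{}(\sigma[-1],\sigma[i])=0$ for $i>0$; in particular $\Hom{}(\sigma,\sigma[1])=0$. Since the coaisle is closed under products, $\sigma^I[-1]$ is in it as well, and so $\Hom{}(\sigma^I,\sigma[1])=0$. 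For generation, let $\mathcal S$ be the thick closure of $\Prod{\sigma}$; clearly $\mathcal S\subseteq\Kb{A}$. For the converse, I use non-degeneracy: $\bigcap_n\,{}^{\perp_{\le0}}\sigma[n]=0$ because $\sigma$ is bounded and cogenerates. I would then show that every injective module $E$ lies in $\mathcal S$ by truncating $E$ with respect to $\mathbb T_\sigma$ and its shifts. Here the heart $\heart{\mathbb T_\sigma}$ has injective cogenerator $\sigma$ (Prod-closed), and $E$ has finite $\Prod{\sigma}$-coresolution length, bounded by the amplitude of $\sigma$ because $\sigma\in\Kb{A}$ gives finite "injective dimension" relative to the t-structure. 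This is the step I expect to be hardest. I would attack it via the fact that the standard t-structure and $\mathbb T_\sigma$ are within finite distance, $\D^{\ge n}\subseteq\Y\subseteq\D^{\ge m}$, so that $E$ has a finite resolution by objects in hearts, each of which is cogenerated by $\Prod{\sigma}$.

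\textbf{(ii)$\Rightarrow$(i).} Set $\mathcal U={}^{\perp_{\le0}}\sigma$ and $\mathcal V={}^{\perp_{>0}}\sigma$. Closure of $\mathcal U$ under $[1]$ is immediate. The condition $\Hom{}(\mathcal U,\mathcal V)=0$ follows from $\Hom{}(\mathcal U,\Prod{\sigma})$ vanishing in the right shifts, together with the generation hypothesis, which gives $\mathcal V=(\mathcal U)^{\perp_0}$ via dévissage. For existence of truncation triangles, I would construct approximations by iterating $\Prod{\sigma}$-approximations $X\to\sigma^{\Hom(X,\sigma)}$ (a countable cellular tower) and taking a homotopy limit, using Brown representability for $\Der{A}^{op}$-type arguments as in \cite{PsaroudakisVitoria:18,NSZ}. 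The vanishing $\Hom{}(\sigma^I,\sigma[1])=0$ ensures that the limit lands in $\mathcal V$, and the cone lands in $\mathcal U$.
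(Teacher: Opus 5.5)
The paper only cites this result, so your sketch has to stand on its own. It has a genuine gap in each direction.

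\textbf{(i)$\Rightarrow$(ii).} First, a shift slip. From $\sigma[-1]\in\Y$ you only get $\Hom{\Der{A}}(\sigma,\sigma[j])=0$ for $j\geq 2$. Use instead that $\sigma\in\X^{\perp_0}=\Y$ directly, which is the case $i=0$ in the definition of $\X$. Since $\Y$ is closed under products, this gives $\Hom{\Der{A}}(\sigma^I,\sigma[i])=0$ for all $i>0$.

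The real gap is generation. The object $\sigma$ need not lie in the heart; it is $\mathrm{H}^0_{\mathbb T_\sigma}(\sigma)$ that is an injective cogenerator there. More seriously, a finite resolution of $E$ by heart objects ``cogenerated by $\Prod{\sigma}$'' does not place $E$ in $\mathrm{thick}(\Prod{\sigma})$, because heart objects are usually not in $\Kb{A}$. For a two-term $\sigma$ with torsion pair $(\T,\F)$, the $\mathbb T_\sigma$-truncation of an injective $E$ is $tE\to E\to E/tE$. Over a selfinjective algebra, $E/tE$ has finite injective dimension only if it is injective.

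Intermediacy is the right input, but it should be used differently:
\begin{enumerate}
\item If $\sigma$ lives in degrees $[a,b]$, then $\D^{<a}\subseteq\X$, so $\Y=\X^{\perp_0}\subseteq\D^{\geq a}$; also $\D^{\geq b}\subseteq\Y$.
\item For $Z\in\Y$, the universal map $Z\to\sigma^{\Hom{\Der{A}}(Z,\sigma)}$ has its cone in $\Y$. This uses surjectivity on $\Hom{\Der{A}}(-,\sigma)$ together with rigidity in all positive shifts.
\item Start from $Z_0=E[-b]$, with $E$ any injective module. Iterating gives triangles $Z_k\to\sigma_k\to Z_{k+1}\to Z_k[1]$ with $Z_k\in\Y$ and $\sigma_k\in\Prod{\sigma}$.
\item Since $\Hom{\Der{A}}(\Y,\sigma_k[i])=0$ for $i>0$, dimension shifting gives $\Hom{\Der{A}}(Z_n,Z_{n-1}[1])\cong\Hom{\Der{A}}(Z_n,E[n-b])\cong\Hom{A}(\mathrm{H}^{b-n}(Z_n),E)$.
\item This vanishes for $n>b-a$, because $Z_n\in\D^{\geq a}$. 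So that triangle splits and $Z_{n-1}\in\Prod{\sigma}$.
\item Unwinding the triangles gives $E\in\mathrm{thick}(\Prod{\sigma})$, hence $\Kb{A}\subseteq\mathrm{thick}(\Prod{\sigma})$.
\end{enumerate}
So the bound by the amplitude that you anticipated is correct. However, it comes from $E$ being injective against $\Y\subseteq\D^{\geq a}$, not from a relative injective dimension of $\sigma$.

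\textbf{(ii)$\Rightarrow$(i).} As literally written, (ii) is too weak once $\sigma$ has more than two terms. Take an injective cogenerator $E$ and $\sigma=E\oplus E[-2]$. It satisfies $\Hom{\Der{A}}(\sigma^I,\sigma[1])=0$ and generates $\Kb{A}$. But ${}^{\perp_{\leq 0}}\sigma=\D^{<0}$ and ${}^{\perp_{>0}}\sigma=\D^{\geq 2}$, which is not a t-structure. So you must assume $\Hom{\Der{A}}(\sigma^I,\sigma[i])=0$ for all $i>0$; this is automatic for $\sigma\in\K{A}$, the case used in the paper. In particular, vanishing for the shift $[1]$ alone cannot be what puts your limit in $\mathcal V$.

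Next, $\Hom{\Der{A}}(\mathcal U,\mathcal V)=0$ cannot come from d\'evissage. Objects of $\mathcal V$ are arbitrary unbounded complexes, and $\mathrm{thick}(\Prod{\sigma})=\Kb{A}$ says nothing about them. What generation does give is ${}^{\perp_{\mathbb Z}}\sigma=0$: if $\Hom{\Der{A}}(X,\sigma[i])=0$ for all $i$, then $\Hom{\Der{A}}(X,E[i])=0$ for all $i$, so $X=0$.

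Orthogonality then follows once you have, for every $X$, a triangle $U\to X\to V\to U[1]$ with $U\in\mathcal U$ and $V\in\mathcal V\cap\mathcal U^{\perp_0}$. Indeed, for $X\in\mathcal V$ one gets $U\in\mathcal U\cap\mathcal V={}^{\perp_{\mathbb Z}}\sigma=0$, since $\mathcal V$ is closed under extensions and $[-1]$.

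Producing these triangles is the non-formal content of the cited results, and your sketch does not do it. To show that a homotopy limit and its fibre lie in the \emph{left} perpendicular classes ${}^{\perp_{>0}}\sigma$ and ${}^{\perp_{\leq 0}}\sigma$, you need control of $\Hom{\Der{A}}(\mathrm{holim}(-),\sigma[i])$. There is no Milnor sequence in that variable, and left perpendicular classes need not even be closed under products. Some genuine additional input must be brought in here, for example boundedness of $\sigma$, intermediacy of the t-structure, or pure-injectivity of $\sigma$.
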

A complex  $\sigma$
 with these properties is called a \textbf{cosilting complex}. We say that two cosilting complexes $\sigma,\sigma'$ are \textbf{equivalent} if they give rise to the same t-structure. This amounts to the condition $\Prod{\sigma}=\Prod{\sigma'}$.

We are  going to see that torsion pairs of finite type correspond bijectively to 2-term cosilting complexes, up to equivalence. Let us denote by $\K{A}$  the subcategory of $\Kb{A}$ consisting of objects isomorphic to complexes that are non-zero in degrees $0$ and $1$ only, and by $\Cosilt{A}$  the collection of equivalence classes of cosilting complexes in $\K{A}$. 
Note that every complex $\sigma\in\K{A}$, when viewed as a morphism between injective modules,  induces a subcategory  of $\Mod{A}$ $$\C_{\sigma}:=\{M\in \Mod{A}\mid \Hom{A}(M, \sigma) \text{ is surjective}\}$$  which is closed under subobjects and extensions and can be regarded as a ``shadow'' of  ${}^{\perp_{> 0}}\sigma$.
  
\begin{lemma}\label{csigma} A complex $\mu \in \K{A}$ satisfies $\Hom{\Der A}(\mu, \sigma[1]) = 0$
   if and only $\H{}(\mu)\in\C_{\sigma}$. \end{lemma}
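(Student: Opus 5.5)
The plan is to reduce the vanishing of $\Hom{\Der A}(\mu,\sigma[1])$ to a statement about the single module $\H{}(\mu)$. I will use the canonical truncation triangle of $\mu$ together with the fact that $\sigma$ is a bounded complex of injectives concentrated in degrees $0$ and $1$.

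\emph{Morphisms into $\sigma$.} Since $\sigma$ is a bounded (hence bounded below) complex of injective modules, it is K-injective. So for every complex $N$ and every $i\in\mathbb Z$ we have $\Hom{\Der A}(N,\sigma[i])=\mathrm H^i\bigl(\Hom{A}^\bullet(N,\sigma)\bigr)$. Two cases of this formula are needed.

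\emph{Step 1: a module in degree $0$.} Let $N$ be a module viewed as a stalk complex in degree $0$. Then $\Hom{A}^\bullet(N,\sigma)$ is the two-term complex $\Hom{A}(N,\sigma^0)\to\Hom{A}(N,\sigma^1)$, placed in degrees $0$ and $1$. Consequently $\Hom{\Der A}(N,\sigma[i])=0$ for all $i\ge 2$. Moreover $\Hom{\Der A}(N,\sigma[1])$ is the cokernel of the map $\Hom{A}(N,\sigma^0)\to\Hom{A}(N,\sigma^1)$ induced by the differential of $\sigma$. In particular,
\[\Hom{\Der A}(N,\sigma[1])=0 \iff N\in\C_\sigma .\]

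\emph{Step 2: the truncation triangle.} The complex $\mu$ has cohomology only in degrees $0$ and $1$. Its canonical truncation therefore gives a triangle
\[\H{}(\mu)\to\mu\to \mathrm H^1(\mu)[-1]\to \H{}(\mu)[1].\]
Apply $\Hom{\Der A}(-,\sigma[1])$ to it. The two outer terms are $\Hom{\Der A}(\mathrm H^1(\mu)[-1],\sigma[1])=\Hom{\Der A}(\mathrm H^1(\mu),\sigma[2])$ and $\Hom{\Der A}(\mathrm H^1(\mu)[-2],\sigma[1])=\Hom{\Der A}(\mathrm H^1(\mu),\sigma[3])$. Both vanish by Step 1. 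The long exact sequence then yields an isomorphism
\[\Hom{\Der A}(\mu,\sigma[1])\cong\Hom{\Der A}(\H{}(\mu),\sigma[1]).\]
Combining this with the equivalence in Step 1 gives the lemma.

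The only point needing care is the identification of $\Hom{\Der A}$ with cohomology of the Hom complex, that is, the K-injectivity of $\sigma$. This is standard for bounded below complexes of injectives. Beyond that I expect no real obstacle; the argument is just this degree bookkeeping.
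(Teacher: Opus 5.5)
Your proof is correct. The paper states this lemma without proof and uses it as a standard fact, so there is no argument in the text to compare against. Your route is the natural one: the truncation triangle $\H{}(\mu)\to\mu\to\mathrm{H}^1(\mu)[-1]\to\H{}(\mu)[1]$, together with the identification $\Hom{\Der A}(N,\sigma[1])\cong\mathrm{Coker}\bigl(\Hom{A}(N,\sigma^0)\to\Hom{A}(N,\sigma^1)\bigr)$ for a module $N$. Both outer vanishings check out.

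An equally short alternative uses the fact that $\mu$ is itself a bounded complex of injectives. Then $\Hom{\Der A}(\mu,\sigma[1])=\Hom{\mathrm{K}(A)}(\mu,\sigma[1])$, which is $\Hom{A}(\mu^0,\sigma^1)$ modulo the maps of the form $d_\sigma h^0+h^1 d_\mu$. Injectivity of $\sigma^1$ absorbs every map vanishing on $\ker d_\mu=\H{}(\mu)$. Injectivity of $\sigma^0$ lets $h^0$ be any map defined on $\H{}(\mu)$. This yields the same cokernel.

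That version is an explicit chain-map computation and avoids truncation triangles and K-injective resolutions, but it needs the terms of $\mu$ to be injective. Yours never uses this. It therefore proves the statement for any $\mu\in\Der{A}$ with cohomology concentrated in degrees $0$ and $1$, and indeed for any $\mu\in\D^{\geq 0}$.
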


Let now
$\Tpair{T}{F}$ be a  torsion pair of finite type in $\Mod{A}$.
We fix an injective cogenerator $E$ of $\Mod{A}$ and take an $\F$-cover $g:C_0 {\longrightarrow} E$ of $E$. Consider   the exact sequence \begin{equation}0\longrightarrow C_1 \overset{}{\longrightarrow} C_0 \overset{g}{\longrightarrow} E\end{equation} and observe that  $C_1\in \F\cap\F^{\perp_1}$ by Wakamatsu's Lemma.  We lift this sequence to $\Der{A}$ by taking  a minimal injective copresentation $\mu_0$   of $C_0$  together with the triangle  \begin{equation}\label{triangle}\mu_1 \longrightarrow \mu_0 \overset{g'}{\longrightarrow} E \longrightarrow \mu_1[1]\end{equation}   completed from the morphism $g' \colon \mu_0 \longrightarrow E$ induced by  $g$. We claim  that the complex  $$\sigma:=\mu_1\oplus\mu_0\in \K{A}$$  is a cosilting complex whose zero cohomology $C:= {\H{}(\sigma)}=C_1\oplus C_0$ cogenerates  $\F$.
Indeed, one can check as in the proof of \cite[Prop.~3.5]{Angeleri:18} that $\C_\sigma=\C_{\mu_1}=\Cogen{C_0}$, and from the approximation property of $g$ it follows that $\F=\Cogen{C_0}=\Cogen C$.
Thus $\Tpair{T}{F}=\Tpair{\rm {{}^{\perp_0}C}}{{\C_\sigma}}$, and   arguments dual to \cite[Thm.~4.9]{AngeleriMarksVitoria:16} show that the HRS-tilt of the standard t-structure at the torsion pair  $\Tpair{T}{F}$ coincides with the t-structure $\mathbb T_\sigma=({}^{\perp_{\leq 0}}\sigma, {}^{\perp_{> 0}}\sigma)$. This proves that $\sigma$ is a cosilting complex with the stated property. 

The modules $C$ arising as zero cohomologies of a 2-term cosilting complex $\sigma$  are called \textbf{cosilting modules}. They are the large counterpart of the support $\tau^{-}$-tilting modules from \cite{AdachiIyamaReiten:14}.  Building on a fundamental result for cotilting modules in \cite{Bazzoni}, it is shown in \cite{BreazPop:17,ZhangWei:17} that cosilting modules are always pure-injective. This implies that the categories of the form $\C_\sigma=\Cogen C$  are always closed under direct limits, see \cite[Cor.4.8]{BreazPop:17}.

Now we can consider the assignment which takes  a 2-term cosilting complex $\sigma$ to the torsion pair of finite type $\Tpair{\rm {{}^{\perp_0}C}}{{\C_\sigma}}$ cogenerated by $C=\H{}(\sigma)$.  Since $\H{}$ commutes with direct products, we have that equivalent cosilting complexes are mapped to the same torsion pair, and by the discussion above we obtain the following result.

\begin{theorem} \cite[Thm.~2.17]{ALS1}\label{cosiltbij} {There is a bijection between $\Cosilt{A}$ and  $\Tors{A}$.} If $\mathfrak{t}=\Tpair{T}{F}$ is the torsion pair associated to a cosilting complex $\sigma$, then  the t-structure $\mathbb T_\sigma=({}^{\perp_{\leq 0}}\sigma, {}^{\perp_{> 0}}\sigma)$  coincides with $\ststr{\mathfrak{t}^-}$, the HRS-tilt of the standard t-structure $\ststr{}$ at the torsion pair  $\mathfrak t$.
 \end{theorem}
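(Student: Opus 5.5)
We define $\Phi\colon\Cosilt{A}\to\Tors{A}$ by $\sigma\mapsto\Tpair{\rm{}^{\perp_0}C}{\C_\sigma}$ with $C=\H{}(\sigma)$, and show that $\Phi$ is well defined, surjective and injective. We also check the identification of $\mathbb T_\sigma$ with the HRS-tilt along the way.

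\emph{Well-definedness.} We must show that $\C_\sigma$ is the torsionfree class of a torsion pair of finite type. First we prove $\C_\sigma=\Cogen C$.
\begin{itemize}
\item For $\Cogen C\subseteq\C_\sigma$: Lemma~\ref{csigma}, applied to a product $\sigma^I$, gives $C^I=\H{}(\sigma^I)\in\C_\sigma$. Closure of $\C_\sigma$ under subobjects then gives the inclusion.
\item For the reverse inclusion: for $M\in\C_\sigma$, take $\mu$ a minimal injective copresentation of $M$. Lemma~\ref{csigma} gives $\Hom{\Der A}(\mu,\sigma[1])=0$, so $\mu\in{}^{\perp_{>0}}\sigma$, the coaisle of $\mathbb T_\sigma$. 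Take the approximation triangle of $\mu$ with respect to $\Prod\sigma$, available because $\sigma$ is a cogenerator of the t-structure on its coaisle side. Its zero cohomology embeds $M$ into a product of copies of $C$.
\end{itemize}
I expect this cogeneration step to need care. An alternative is to quote that $\C_\sigma$ is closed under products and that $C$ is a cogenerator for it, dually to \cite{AngeleriMarksVitoria:16}.

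Once $\C_\sigma=\Cogen C$, we conclude as follows. $\C_\sigma$ is closed under subobjects, extensions and products, so it is a torsionfree class. It is closed under direct limits because $C$ is pure-injective \cite{BreazPop:17,ZhangWei:17}, so the pair is of finite type. Equivalent complexes have the same $\Prod$, hence the same $\Cogen C$ since $\H{}$ commutes with products. So $\Phi$ is well defined on $\Cosilt{A}$.

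\emph{Surjectivity.} This is the construction preceding the statement. Given $\Tpair{T}{F}$ of finite type, Theorem~\ref{finite type}(2) provides the $\F$-cover $g$ of $E$, and we form $\sigma=\mu_1\oplus\mu_0$. The verification that $\C_\sigma=\Cogen C_0=\F$ uses Wakamatsu's lemma for $C_1\in\F^{\perp_1}$ and the approximation property of $g$. The proof that $\sigma$ is cosilting comes from identifying $\mathbb T_\sigma$ with $\ststr{\mathfrak t^-}$.

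For that identification, I would compare coaisles. An object $X$ lies in ${}^{\perp_{>0}}\sigma$ if and only if $X\in\D^{\ge0}$ with $\mathrm{H}^0(X)\in\C_\sigma=\F$. To see this:
\begin{itemize}
\item Since $\sigma$ consists of injectives in degrees $0,1$, the condition $\Hom{}(X,\sigma[i])=0$ for $i\ge2$ forces vanishing of negative cohomology. This uses that $E$ is a cogenerator and that $\mu_1\oplus\mu_0$ contains $E$ in the cone.
\item The condition at $i=1$ is Lemma~\ref{csigma}, after truncation.
\end{itemize}
Since a t-structure is determined by its coaisle, this gives both that $\sigma$ is cosilting and the second assertion of the theorem.

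\emph{Injectivity and the second assertion in general.} For an arbitrary cosilting $\sigma$ with associated $\mathfrak t$, the same coaisle computation shows that ${}^{\perp_{>0}}\sigma=\Y_{\mathfrak t^-}$. This uses only $\C_\sigma=\F$ and the fact that $\sigma$ is a 2-term complex of injectives with $\Hom{}(\sigma^I,\sigma[1])=0$. The delicate point is getting $\D^{\ge0}$ from the cosilting generation condition. Hence $\mathbb T_\sigma=\ststr{\mathfrak t^-}$. Two cosilting complexes with the same torsion pair therefore define the same t-structure, so they are equivalent, which gives injectivity. The main obstacle is this coaisle identification, namely showing that ${}^{\perp_{>0}}\sigma\subseteq\D^{\ge0}$; I would argue it dually to \cite[Thm.~4.9]{AngeleriMarksVitoria:16}.
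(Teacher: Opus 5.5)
Your outline follows the same route as the paper: the $\F$-cover construction for surjectivity, $\C_\sigma=\Cogen{C}$, identifying $\mathbb T_\sigma$ with $\ststr{\mathfrak t^-}$ via the HRS-tilt, and injectivity because the t-structure determines the equivalence class. Two steps, however, are not justified as written.

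The first is the inclusion ${}^{\perp_{>0}}\sigma\subseteq\D^{\ge 0}$ for an arbitrary cosilting $\sigma\in\K{A}$. You leave it open, and your cogeneration step depends on it: to get $0\to M\to C^I$ from $\mu\to\sigma^I\to Y'\to\mu[1]$ you need $\mathrm{H}^{-1}(Y')=0$. It does not come from the generation condition; it follows at once from the t-structure property. For $i\le 0$ the complex $\sigma[i]$ is a bounded complex of injectives concentrated in degrees $\ge 0$, so $\D^{<0}\subseteq{}^{\perp_{\le 0}}\sigma$. Hence ${}^{\perp_{>0}}\sigma=({}^{\perp_{\le 0}}\sigma)^{\perp_0}\subseteq(\D^{<0})^{\perp_0}=\D^{\ge 0}$. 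With this, both your proof of $\C_\sigma\subseteq\Cogen{C}$ and your coaisle identification in the injectivity part go through.

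The second gap is in the surjectivity part. The fact that a t-structure is determined by its coaisle does not show that the constructed $\sigma$ is cosilting. Knowing that ${}^{\perp_{>0}}\sigma=\Y_{\mathfrak t^-}$ is a coaisle does not force ${}^{\perp_{\le 0}}\sigma$ to be its left orthogonal: for $\sigma=0$ the second class is a coaisle, but the pair is not a t-structure. The inclusion $\X_{\mathfrak t^-}\subseteq{}^{\perp_{\le 0}}\sigma$ is fine, because $\sigma[i]\in\Y_{\mathfrak t^-}$ for $i\le 0$. You must also show ${}^{\perp_{\le 0}}\sigma\subseteq\X_{\mathfrak t^-}$, which follows from the triangle $\mu_1\to\mu_0\to E\to\mu_1[1]$:
\begin{itemize}
\item for $i<0$ it gives $\Hom{A}(\mathrm{H}^{-i}(X),E)=0$, so $X\in\D^{\le 0}$;
\item then $0=\Hom{\Der{A}}(X,\sigma)\cong\Hom{A}(\H{}(X),C)$ gives $\H{}(X)\in{}^{\perp_0}\F=\T$.
\end{itemize}
Alternatively, check condition (ii) directly. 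First, $\Hom{\Der{A}}(\sigma^I,\sigma[1])=0$ by Lemma~\ref{csigma}, since $C^I\in\F=\C_\sigma$. Second, products of copies of the triangle put $E^I$ into the thick closure of $\Prod{\sigma}$, hence every injective module and so all of $\Kb{A}$. Then (i) holds, and two t-structures with the same coaisle coincide.

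Finally, your derivation of $X\in\D^{\ge 0}$ needs the condition at $i=1$ as well, not only $i\ge 2$: the vanishing of $\mathrm{H}^{-1}(X)$ uses $\Hom{\Der{A}}(X,\mu_0[1])=0$.
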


\section{Maximal rigid sets}\label{zg}

We have already mentioned above that zero cohomologies of 2-term cosilting complexes are pure-injective. In fact, all cosilting complexes are pure-injective by \cite{MarksVitoria:18}, and this allows us to use the theory of purity in order to study the lattice $\tors{A}$. Let us recall some notions from \cite{Krause:00}.

A \textbf{pure triangle} is a triangle $ X\stackrel{f}{\to} Y\stackrel{g}{\to} Z\to X[1]$ in $\Der{A}$ such that $0\to \Hom{\Der{A}}(F, X)\to \Hom{\Der{A}}(F,Y)\to \Hom{\Der{A}}(F,Z)\to 0$ is an exact sequence of abelian groups  for every compact object $F\in\Der{A}$. 
In this case $f$ is called a \textbf{pure monomorphism} and $g$ a \textbf{pure epimorphism}. 
An object $X$ in 
$\Der{A}$ is \textbf{pure-injective} if every pure triangle  starting at $X$  splits. 
A subcategory $\C$ of $\Der{A}$ is \textbf{definable} if it is closed under direct products, pure monomorphisms and pure epimorphisms.

The \textbf{Ziegler spectrum} $\Zg{\Der{A}}$ of $\Der{A}$ 
is the topological space whose points are given by the isomorphism classes of indecomposable pure-injective objects in $\Der{A}$ and whose closed sets  are the sets
of the form $\C \cap\Zg{\Der{A}}$  where $\C$ is a definable subcategory of $\Der{A}$.
The subset formed by the isomorphism classes of indecomposable pure-injective complexes concentrated in degrees 0 and 1 will be denoted by
   $\ZgInt{A} := \K{A}\cap\Zg{\Der{A}}$.
   
These notions are triangulated versions of the more classical notions of purity in module categories. In particular, the Ziegler spectrum $\Zg{A}$ of $\Mod{A}$ is defined by taking as points the isomorphism classes of indecomposable pure-injective modules, and the closed sets are defined correspondingly.

We now collect some properties of cosilting complexes. We will say that a set $\M$ of indecomposable objects in $\Der{A}$ is \textbf{rigid} if $\Hom{\Der{A}}(\mu, \nu[1]) = 0$ for all $\mu, \nu \in \M$.

\begin{proposition}\label{propcosilt} The following statements hold true for  a cosilting complex $\sigma\in\K{A}$ with associated t-structure $\mathbb T_\sigma=({}^{\perp_{\leq 0}}\sigma, {}^{\perp_{> 0}}\sigma)$. 
\begin{enumerate}
\item 
\cite{MarksVitoria:18} The complex $\sigma$ is pure-injective in $\Der{A}$.

\item\cite[Lemma 4.8]{AngeleriMarksVitoria:17} The coaisle $\Y$  is a definable subcategory of $\Der{A}$ with $\Y\cap\Y^{\perp_1}=\Prod{\sigma}$.

\item \cite[Prop.~3.2]{ALS1} The set $\N:=\Prod{\sigma}\cap\Zg{\Der{A}}$   is  a closed set  in $\Zg{\Der{A}}$ such that $\Prod\N=\Prod\sigma$, and the t-structure $\mathbb T_\sigma$  coincides with  $({}^{\perp_{\leq 0}} \N, {}^{\perp_{> 0}} \N)$.

\item \cite[Thm.~3.8]{ALS1} The set $\N$ is  rigid, and it is maximal among all rigid sets in $\ZgInt{A}$.
\end{enumerate}
\end{proposition}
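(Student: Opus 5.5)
The four statements are cited results, but here is how I would prove them in order, using the t-structure $\mathbb T_\sigma=(\X,\Y)$ throughout.

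\emph{Step (1): pure-injectivity.} I would show that $\sigma$ is a summand of $\sigma^I$ in a way compatible with purity. The plan is to use the characterisation of pure-injective objects in compactly generated triangulated categories from \cite{Krause:00}: $X$ is pure-injective if and only if the summation map $X^{(I)}\to X$ factors through the canonical map $X^{(I)}\to X^I$, for every set $I$. The factorisation comes from the t-structure. The map $X^{(I)}\to X^I$ has cone in $\X$, because $\Hom{\Der A}(\sigma^{(I)},\sigma[i])=\prod\Hom{\Der A}(\sigma,\sigma[i])$ and products are compatible. Since $\Hom{\Der A}(\X,\sigma)=0$, the factorisation then follows. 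I expect this step to be the main obstacle. The cited argument in \cite{MarksVitoria:18} goes through homotopy colimits and definability of the coaisle, so I would really prove (2) first and then deduce (1): a definable coaisle is closed under pure monomorphisms, and $\sigma$ is an Ext-injective object in it, hence split off along pure monomorphisms.

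\emph{Step (2): definability of the coaisle and $\Y\cap\Y^{\perp_1}=\Prod\sigma$.} Since $\sigma$ is 2-term, $\Y={}^{\perp_{>0}}\sigma$. By Theorem~\ref{cosiltbij} this coaisle is the HRS-tilted coaisle: complexes $Y$ with $\mathrm H^{k}(Y)=0$ for $k<0$ and $\H{}(Y)\in\F$. Definability then follows from two facts. First, $\F$ is definable in $\Mod A$, being closed under products, subobjects and direct limits by the finite type condition. Second, cohomology functors send pure triangles to exact sequences compatible with purity.

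For the equality, $\sigma\in\Y$ by the first condition of (ii), and $\Hom{\Der A}(\Y,\sigma[1])=0$ by the definition of $\Y$. Both properties pass to products and summands, so $\Prod\sigma\subseteq\Y\cap\Y^{\perp_1}$. Conversely, take $Y\in\Y\cap\Y^{\perp_1}$ and use the approximation triangle $Y\to\sigma^{I}\to Z\to Y[1]$, where $I=\Hom{\Der A}(Y,\sigma)$. One checks that $Z\in\Y$, so the triangle splits and $Y\in\Prod\sigma$.

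\emph{Steps (3) and (4): the closed set $\N$ and maximal rigidity.} Here I would use the fact that $\Prod\sigma$ is the class of Ext-injectives of a definable category. That makes it definable, and by Krause's theory every object of $\Prod\sigma$ is a pure-injective hull of a product of indecomposables in its Ziegler closed set. Hence $\Prod\N=\Prod\sigma$, and $\N$ is closed. The description of the t-structure by $\N$ is then immediate. Rigidity of $\N$ is inherited from that of $\sigma$. For maximality, let $\mu\in\ZgInt A$ be rigid with $\N$. Then $\mu\in\Y$ and $\Hom{\Der A}(\Y,\mu[1])=0$; the latter I would check via Lemma~\ref{csigma} and the cohomology description of $\Y$ for 2-term complexes. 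So $\mu\in\Y\cap\Y^{\perp_1}=\Prod\sigma$, and therefore $\mu\in\N$.
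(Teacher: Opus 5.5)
Your argument for (2) is sound, and it is not circular, because Theorem~\ref{cosiltbij} rests only on the pure-injectivity of cosilting modules. The HRS description of $\Y$, the definability of $\F$, and the fact that each $\mathrm{H}^k$ turns pure triangles into pure-exact sequences give definability of $\Y$ directly in the 2-term case. Your approximation-triangle argument for $\Y\cap\Y^{\perp_1}=\Prod\sigma$ is the standard one.

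The genuine gap is in (3): $\Prod\sigma=\Y\cap\Y^{\perp_1}$ is \emph{not} definable in general. Definable subcategories of $\Der{A}$ are closed under coproducts, since $X^{(I)}\to X^I$ is always a pure monomorphism. So definability of $\Prod\sigma$ would make $\sigma$, and every point of $\N$, $\Sigma$-pure-injective. This already fails in Example~\ref{kron2}: there $\N_{\mathfrak t}$ contains the adic modules, which, like the $p$-adic integers over $\mathbb Z$, are not $\Sigma$-pure-injective. Hence you cannot apply to $\Prod\sigma$ the fact that a pure-injective object of a definable class is a summand of a product of indecomposable pure-injectives from that class. That fact has to be applied to the definable closure $\langle\sigma\rangle$ of $\sigma$. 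The real content of (3) is then that every indecomposable pure-injective in $\langle\sigma\rangle$ is still Ext-injective in $\Y$, i.e.\ that the condition $\Hom{\Der{A}}(\Y,-[1])=0$ is Ziegler-closed on the relevant points. Granting this, $\N=\langle\sigma\rangle\cap\Zg{\Der{A}}$ is closed and $\sigma\in\Prod\N$. Proving it needs a finiteness argument your proposal does not contain. For instance, for $W\in\K{A}$ pure-injective one can reduce in three steps:
first, $\Hom{\Der{A}}(Y,W[1])\cong\Hom{\Der{A}}(\H{}(Y),W[1])$;
then, using pure-injectivity of $W$ and $\F=\varinjlim\mathbf f$, to modules $F\in\mathbf f=\F\cap\mod{A}$;
finally, $\Hom{\Der{A}}(F,-[1])$ agrees on $\D^{\geq 0}$ with the coherent functor of a truncated projective resolution of $F$, so its vanishing is a closed condition.

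The maximality part of (4) has a second gap. Rigidity of $\N\cup\{\mu\}$ and Lemma~\ref{csigma} give $\H{}(\nu)\in\C_\mu$ only for each single $\nu\in\N$. By contrast, $\Hom{\Der{A}}(\Y,\mu[1])=0$ amounts to $\F=\Cogen{\H{}(\N)}\subseteq\C_\mu$, which involves arbitrary products of the $\H{}(\nu)$. Since $\Hom{\Der{A}}(-,\mu[1])$ does not turn products into products, this does not follow formally. This is where a finiteness hypothesis such as $A$ left artinian, as in Theorem~\ref{bijcpxmaxrigid}, enters:
each $F\in\mathbf f$ has finite length, hence embeds in a finite direct sum of modules $\H{}(\nu)$, so $\mathbf f\subseteq\C_\mu$;
pure-injectivity of $\mu$ makes $\C_\mu$ closed under direct limits;
and $\F=\varinjlim\mathbf f$.

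Finally, in (1) your first sketch should be dropped. Asking $\Hom{\Der{A}}(\sigma^I,\sigma[i])$ to be compatible with $\prod_I\Hom{\Der{A}}(\sigma,\sigma[i])$ presupposes the factorisation you want. Moreover, the cone $C$ of $\sigma^{(I)}\to\sigma^I$ actually lies in $\Y$, so it lies in $\X$ only if it is zero, and the obstruction sits in $\Hom{\Der{A}}(C,\sigma[1])$ in any case. Your fallback works once the closure property is stated correctly. $C$ is a pure-epimorphic image of $\sigma^I\in\Y$, so $C\in\Y$ by closure under products and pure epimorphisms (not pure monomorphisms). Hence $\Hom{\Der{A}}(C,\sigma[1])=0$, which yields Krause's factorisation criterion.
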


Let us denote by $\CosiltZg{A}$ the collection of all maximal rigid sets in $\ZgInt{A}$ with the partial order  $\M \leq\N$  when $\Hom{\Der{A}}(\N, \M[1])=0$.

\begin{theorem}\cite[Thm.~3.8]{ALS1}\label{bijcpxmaxrigid} 
If $A$ is a left artinian ring, the assignment that  takes  a cosilting complex $\sigma$ to the maximal rigid set $\N:=\Prod{\sigma}\cap\Zg{\Der{A}}$ defines a bijection between $\Cosilt{A}$ and $\CosiltZg{A}$.
\end{theorem}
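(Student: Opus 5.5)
The plan is to show that the assignment $\sigma\mapsto\N_\sigma:=\Prod{\sigma}\cap\Zg{\Der{A}}$ is well defined, lands in $\CosiltZg{A}$, and is injective and surjective. \emph{Well-definedness}: equivalent cosilting complexes satisfy $\Prod{\sigma}=\Prod{\sigma'}$, so they give the same set $\N$. By Proposition~\ref{propcosilt}(3),(4), $\N_\sigma$ is a closed rigid set and is maximal among rigid subsets of $\ZgInt{A}$. We still need $\N_\sigma\subseteq\ZgInt{A}$. Since $\sigma\in\K{A}$ and $\K{A}$ is closed under products and summands of complexes of injectives, this should hold, at least over a left noetherian ring where products of injectives are injective. \emph{Injectivity}: by Proposition~\ref{propcosilt}(3), $\Prod\N_\sigma=\Prod\sigma$, so $\N_\sigma$ determines $\Prod\sigma$ and hence the equivalence class of $\sigma$.

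\emph{Surjectivity} is the main step. Let $\M$ be a maximal rigid set in $\ZgInt{A}$. I would produce a cosilting complex $\sigma$ with $\M\subseteq\N_\sigma$; maximality of $\M$ together with rigidity of $\N_\sigma$ then forces $\M=\N_\sigma$. To find $\sigma$, I would work through torsion pairs and use Theorem~\ref{cosiltbij}.
\begin{enumerate}
\item Set $\C:=\bigcap_{\mu\in\M}\C_\mu$. Each $\C_\mu$ is closed under subobjects and extensions, so $\C$ is too.
\item Let $\mathbf f:=\C\cap\mod{A}$. Since $A$ is left artinian (in particular left noetherian), $\mathbf f$ is closed under submodules and extensions in $\mod{A}$, hence is a torsionfree class there.
\item Form $\F:=\varinjlim\mathbf f$, the torsionfree class of finite type corresponding to $\mathbf f$ under Theorem~\ref{finite type}.
\item Take the $2$-term cosilting complex $\sigma$ attached to $\F$ by the construction before Theorem~\ref{cosiltbij}. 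It satisfies $\C_\sigma=\F$.
\end{enumerate}

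It remains to show $\M\subseteq\Prod\sigma$. By Proposition~\ref{propcosilt}(2), $\Prod\sigma=\Y\cap\Y^{\perp_1}$, where $\Y={}^{\perp_{>0}}\sigma$. So for each $\mu\in\M$ I need two things.
\begin{itemize}
\item $\Hom{\Der A}(\mu,\sigma[1])=0$, which by Lemma~\ref{csigma} amounts to $\H{}(\mu)\in\F$.
\item $\Hom{\Der A}(\Y,\mu[1])=0$.
\end{itemize}

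For the first condition, rigidity of $\M$ gives $\H{}(\mu)\in\C_\nu$ for all $\nu\in\M$ by Lemma~\ref{csigma}, so $\H{}(\mu)\in\C$. The gap is showing $\C\subseteq\F$. A pure-injective module in $\C$ need not obviously be a direct limit of modules in $\mathbf f$, and this is where the Ziegler topology should enter. I would argue that $\C$ is definable, because each $\C_\mu$ is defined by a surjectivity condition on Hom into a pure-injective complex. Over an artinian ring a definable subcategory closed under submodules is determined by its finitely presented objects, so $\C=\varinjlim(\C\cap\mod{A})=\F$. I expect this identification of $\C$ with $\F$, i.e.\ that $\C$ is of finite type, to be the main obstacle.

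For the second condition, every $Y\in\Y$ is an extension of objects from $\F$ in degree $0$ and from $\Mod{A}$ shifted. I would reduce $\Hom(\Y,\mu[1])=0$ to the vanishing $\Hom(F,\mu[1])=0$ for $F\in\F$, which is the statement $\F\subseteq\C_\mu$. This follows from $\C\subseteq\C_\mu$ and $\F=\C$. The terms from the lower cohomology vanish for degree reasons, because $\mu$ is a $2$-term complex of injectives.

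Once $\M\subseteq\N_\sigma$ is established, maximality of $\M$ gives $\M=\N_\sigma$, which completes the proof.
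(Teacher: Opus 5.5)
Your route is sound and genuinely different from the paper's. The paper takes $\sigma=\prod_{\omega\in\N}\omega$ and notes that $\C_\sigma$ is a torsionfree class in $\Mod{A}$; this $\C_\sigma$ equals $\bigcap_{\omega\in\N}\C_\omega$, which is your $\C$. It then uses Bongartz completion to realise $\sigma$ as a direct summand of a cosilting complex, and concludes by maximality.

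You bypass Bongartz completion. You build the cosilting complex directly from the finite-type torsion pair with torsionfree class $\F=\varinjlim(\C\cap\mod{A})$, and you check $\M\subseteq\Y\cap\Y^{\perp_1}=\Prod{\sigma}$ by hand. This makes the torsion pair attached to $\M$ explicit: its torsionfree class is $\bigcap_{\mu\in\M}\C_\mu$, as for $\mathfrak{u}_\M$ in Section 8. A posteriori, $\prod_{\omega\in\M}\omega$ is equivalent to your $\sigma$ by Proposition~\ref{propcosilt}(3). Both proofs rest on the same fact, that $\C$ is a torsionfree class of finite type, and your proposal does not actually prove it.

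Your justification is that $\C$ is definable ``because each $\C_\mu$ is defined by a surjectivity condition on Hom into a pure-injective complex''. This does not work as stated: definability requires closure under direct products, and nothing in that description gives it.

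You have also put the difficulty in the wrong direction. The inclusion $\C\subseteq\F$ is all your first condition needs, and it is trivial. Indeed, $\C$ is closed under submodules, and every module is the directed union of its finitely generated submodules. These are finitely presented because $A$ is left noetherian, so they lie in $\mathbf f$.

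What needs an argument is $\F\subseteq\C_\mu$ for each $\mu\in\M$, which is your second condition. This amounts to showing that $\C_\mu$ is closed under direct limits, and this is where pure-injectivity of $\mu$ in $\Der{A}$ enters.
\begin{itemize}
\item Let $M=\varinjlim M_i$ with all $M_i\in\C_\mu$. The canonical pure exact sequence $0\to K\to\bigoplus_i M_i\to M\to 0$ gives a pure triangle in $\Der{A}$.
\item Since $\mu$ is pure-injective, the map $\Hom{\Der{A}}(\bigoplus_i M_i,\mu)\to\Hom{\Der{A}}(K,\mu)$ is surjective.
\item Hence the connecting map $\Hom{\Der{A}}(K,\mu)\to\Hom{\Der{A}}(M,\mu[1])$ vanishes, and $\Hom{\Der{A}}(M,\mu[1])$ embeds into $\prod_i\Hom{\Der{A}}(M_i,\mu[1])=0$.
\end{itemize}
Therefore $\mathbf f\subseteq\C\subseteq\C_\mu$ yields $\F\subseteq\C_\mu$ for every $\mu\in\M$, so $\C=\F$. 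Closure of $\C$ under products then follows from Crawley-Boevey's theorem, rather than being an input.

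With this in place the rest of your argument goes through. For the second condition, use the truncation triangle $\H{}(Y)\to Y\to\tau^{\geq 1}Y\to$, since $Y$ need not be bounded above. The map from $\tau^{\geq 1}Y$ to $\mu[1]$ vanishes because $\mu[1]$ is a complex of injectives in degrees $-1,0$. In checking $\mu\in\Y$, the vanishing of $\Hom{\Der{A}}(\mu,\sigma[i])$ for $i\geq 2$ is automatic for degree reasons.

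Two small corrections. First, products of injective modules are injective over any ring, so no noetherian hypothesis is needed there. Second, it is the descending chain condition, not noetherianity, that makes $\C\cap\mod{A}$ (closed only under submodules and extensions) a torsionfree class in $\mod{A}$. Over $\mathbb{Z}$, the finite $p$-groups are closed under submodules and extensions but do not form a torsionfree class.
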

 The reverse assignment associates to $\N\in\CosiltZg{A}$ the complex $\sigma := \prod_{\omega \in \mathcal{N}}\omega$ in $\K{A}$. We need the assumption that $A$ is left artinian to ensure that the rigid object  $\sigma$ is indeed a cosilting complex. In fact, the artinian property allows us to argue that the class $\C_\sigma\cap\mod{A}$, being closed under submodules and extensions,  is a torsionfree class in $\mod{A}$. Now we infer as in \cite[Lemma 4.2]{AngeleriSentieri:23+} that $\C_\sigma$ is a torsionfree class in $\Mod{A}$, and we use Bongartz completion (cf.~\cite[Prop.~3.10]{ZhangWei:17}) to see that $\sigma$ is a direct summand of a cosilting complex $\mu\in\K{A}$. The maximality of  $\N$ then implies that the maximal rigid set associated to $\mu$ coincides with $\N$, and $\sigma$ is a cosilting complex equivalent to $\mu$. 
 
 Combining Theorems~\ref{CBijection}, \ref{cosiltbij}, and \ref{bijcpxmaxrigid}, we obtain the following result.
 
 \begin{corollary}\cite[Cor.~3.11]{ALS1}\label{fundbij} If $A$ is a left artinian ring, there is a lattice isomorphism between $\tors{A}$ and $\CosiltZg{A}$. \end{corollary}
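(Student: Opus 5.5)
The plan is to compose the three bijections already established and then check that the composite respects the two partial orders. By Theorem~\ref{CBijection}(1), $\tors{A}\cong\Tors{A}$ via $\tpair{t}{f}\mapsto(\varinjlim\mathbf t,\varinjlim\mathbf f)$, and this is a lattice isomorphism. Theorem~\ref{cosiltbij} gives a bijection $\Cosilt{A}\to\Tors{A}$, sending $\sigma$ to $({}^{\perp_0}C,\C_\sigma)$ with $C=\H{}(\sigma)$. Theorem~\ref{bijcpxmaxrigid} gives a bijection $\Cosilt{A}\to\CosiltZg{A}$, sending $\sigma$ to $\N=\Prod\sigma\cap\Zg{\Der{A}}$. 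Composing these yields a bijection $\tors{A}\to\CosiltZg{A}$. Since $\CosiltZg{A}$ carries only a partial order, it suffices to show that this bijection and its inverse are order preserving. A lattice isomorphism is then automatic, because an order isomorphism between posets transports the lattice structure of $\tors{A}$.

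For the order comparison, take maximal rigid sets $\M,\N$ with cosilting complexes $\mu=\prod\M$ and $\sigma=\prod\N$. Let $\Tpair{T'}{F'}$ and $\Tpair{T}{F}$ be the corresponding torsion pairs of finite type, so that $\F'=\C_\mu$ and $\F=\C_\sigma$. Ordering by inclusion of torsion classes amounts to reverse inclusion of torsionfree classes, and $\M\le\N$ means $\Hom{\Der{A}}(\N,\M[1])=0$. I will prove
\[\Hom{\Der{A}}(\sigma,\mu[1])=0 \iff \F\subseteq\F',\]
which corresponds to the torsion class of $\M$ being contained in that of $\N$, the direction compatible with the stated order. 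The left-hand condition is equivalent to $\M\le\N$ because $\Hom(-,\mu[1])$ turns products in the second variable into products and $\sigma=\prod\N$. One should double-check that the products behave correctly in the first variable. Here $\Prod\N=\Prod\sigma$ by Proposition~\ref{propcosilt}(3), and the rigidity condition against a fixed object is closed under products in the first argument for these pure-injective complexes. If this causes trouble, I will instead argue with the t-structures directly, as in the next paragraph.

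The core step uses Lemma~\ref{csigma}. We have $\Hom{\Der{A}}(\sigma,\mu[1])=0$ if and only if $C=\H{}(\sigma)\in\C_\mu=\F'$. Since $\F=\Cogen C$ and $\F'$ is closed under products and subobjects, this holds if and only if $\F\subseteq\F'$. An alternative route phrases everything via Theorem~\ref{cosiltbij}: the coaisles satisfy ${}^{\perp_{>0}}\sigma=\Y_\sigma\subseteq\Y_\mu$ if and only if $\F\subseteq\F'$, by the HRS description, and $\sigma\in\Y_\mu$ is exactly the vanishing in question by Proposition~\ref{propcosilt}(2). Finally, restriction to $\mod{A}$ shows that $\F\subseteq\F'$ is equivalent to $\mathbf f\subseteq\mathbf f'$, since $\F=\varinjlim\mathbf f$ and $\mathbf f=\F\cap\mod{A}$. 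This is equivalent to $\mathbf t'\subseteq\mathbf t$.

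I expect the main obstacle to be only bookkeeping: checking that the order on $\CosiltZg{A}$ matches inclusion of torsion classes in the correct direction, and not its opposite. The mathematical content is already contained in Lemma~\ref{csigma} together with $\F=\Cogen C$.
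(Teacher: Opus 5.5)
Your proof is correct, but its key step differs from the paper's. The paper argues entirely with t-structures. By Proposition~\ref{propcosilt}(3) and Theorem~\ref{cosiltbij}, $\mathfrak u\le\mathfrak t$ is the reverse inclusion of coaisles ${}^{\perp_{>0}}\N_{\mathfrak u}\supseteq{}^{\perp_{>0}}\N_{\mathfrak t}$. This is equivalent to $\N_{\mathfrak t}\subseteq{}^{\perp_{>0}}\N_{\mathfrak u}$, because ${}^{\perp_{>0}}\N_{\mathfrak t}$ is the smallest coaisle containing $\N_{\mathfrak t}$ (Psaroudakis--Vit\'oria).

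You instead reduce to modules. Lemma~\ref{csigma} turns the vanishing of $\Hom{\Der{A}}(-,\mu[1])$ into membership of the zero cohomology in $\C_\mu=\F'$, and $\F=\Cogen{C}$ turns that into $\F\subseteq\F'$. This avoids the minimality of coaisles and uses only material from Section~\ref{cosilt}. The cost is that it is tied to the 2-term setting of Lemma~\ref{csigma}. The paper's argument, by contrast, works directly with the sets $\N$ and fits the HRS-tilt viewpoint used later.

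Your fallback route through coaisles is really the paper's proof. To pass from $\sigma\in\Y_\mu$ to $\Y_\sigma\subseteq\Y_\mu$ you would need exactly the Psaroudakis--Vit\'oria minimality, which you do not mention.

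One step is only loosely justified: the passage from $\Hom{\Der{A}}(\N,\M[1])=0$ to $\Hom{\Der{A}}(\sigma,\mu[1])=0$. Pure-injectivity is not what makes this work. It is cleaner to avoid products in the first variable altogether, as follows.
For each $\nu\in\N$ we have $\Hom{\Der{A}}(\nu,\mu[1])\cong\prod_{\lambda\in\M}\Hom{\Der{A}}(\nu,\lambda[1])$. By Lemma~\ref{csigma}, this vanishes if and only if $\H{}(\nu)\in\F'$. Since $\F'$ is closed under products and submodules, and $C=\prod_{\nu\in\N}\H{}(\nu)$, this holds for all $\nu$ if and only if $C\in\F'$. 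Alternatively, ${}^{\perp_{>0}}\mu$ is a coaisle, hence closed under products.
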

 \begin{proof} In order to show that the composition of the bijections established above is order preserving, we consider two torsion pairs $\tpair{u}{v}\le\tpair{t}{f}$ in $\tors{A}$. They are mapped via Theorem~\ref{CBijection} to torsion pairs  $\mathfrak{u}:=\Tpair{U}{V}$ and $\mathfrak{t}:=\tpair{T}{F}$ in $\Tors{A}$, where clearly $\mathfrak u\le \mathfrak t$. Applying Theorems~\ref{cosiltbij}, and \ref{bijcpxmaxrigid}, we obtain two cosilting complexes $\sigma_{\mathfrak{u}}$ and  $\sigma_{\mathfrak{t}}$ with associated maximal rigid sets $\N_{\mathfrak{u}}$ and $\N_{\mathfrak{t}}$. Recall that the t-structures $\mathbb T_{\sigma_{\mathfrak{u}}}$ and $\mathbb T_{\sigma_{\mathfrak{t}}}$ coincide on one hand with the t-structures defined by $\N_{\mathfrak{u}}$ and $\N_{\mathfrak{t}}$, and on the other hand with the HRS-tilts of the standard t-structure at $\mathfrak u$ and $\mathfrak t$, respectively. So  $\mathfrak u\le \mathfrak t$ amounts to an inclusion of aisles, or equivalently, a reverse inclusion of  coaisles ${}^{\perp_{> 0}} \N_{\mathfrak{u}}\supseteq{}^{\perp_{> 0}} \N_{\mathfrak{t}}$. But this just means that  ${}^{\perp_{> 0}} \N_{\mathfrak{u}}$ contains $\N_{\mathfrak{t}}$, because  ${}^{\perp_{> 0}} \N_{\mathfrak{t}}$ is the smallest coaisle in $\Der{A}$ containing  $\N_{\mathfrak{t}}$  by \cite[Prop.~4.9]{PsaroudakisVitoria:18}. Now we have $ \N_{\mathfrak{u}}\le  \N_{\mathfrak{t}}$ by  definition.\end{proof}

 \begin{remark}\label{indecomp}
{\rm  (1) \cite[Lem.~4.4]{ALS1} Every complex in $\ZgInt{A}$   is isomorphic  to the  minimal injective copresentation  $\mu_M $ of a module $M\in\Zg{A}$, or to $I[-1]$ for an  indecomposable injective module $I$. 

Corollary~\ref{fundbij} can then be rephrased within the module category $\Mod{A}$, obtaining  a bijection between torsion pairs in $\mod{A}$ and certain pairs $(\Z,\mathcal I)$, called \textbf{cosilting pairs},  where $\Z$ is  a closed set  in the Ziegler spectrum  $\Zg{A}$  and $\mathcal I$ is a set of indecomposable injective modules, see \cite[\S 4]{ALS1}.}

\label{ff}{\rm (2)   Let $A$ be a finite dimensional algebra over a field. Recall from \cite {AdachiIyamaReiten:14} that the poset $\ftors{A}$ is parametrised  by  support $\tau$-tilting pairs, or dually, by  support $\tau^{-}$-tilting pairs. The bijection  between torsion pairs and cosilting pairs considered in (1) is  a natural generalisation of this parametrisation. Indeed, the functorially finite torsion pairs correspond  to the cosilting pairs  $(\Z,\mathcal I)$ where   $\Z$ is a finite set and consists of finitely generated  modules, see \cite[Prop.~4.11]{ALS1}. 
In other terms, the
 bijection in Corollary~\ref{fundbij} restricts to a bijection between functorially finite torsion pairs and maximal rigid sets formed by finitely many two-term complexes of finite dimensional injective modules.  }
\end{remark}

\section{Hearts}\label{hearts}

 Now that we have a one-one-correspondence between torsion pairs in $\mod{A}$ and maximal rigid sets, we want to understand how properties of the lattice $\tors{A}$ are reflected in $\CosiltZg{A}$. We will see in Section~\ref{negis} that every maximal rigid set $\N$ has some distinguished elements, called neg-isolated points, that  play an important role when investigating the shape of $\tors{A}$. This will become more clear once we uncover the information encoded by   the  associated t-structure
$({}^{\perp_{\leq 0}}\N, {}^{\perp_{> 0}}\N)$. Recall that this is the t-structure induced by  the corresponding cosilting complex.

It will be convenient to investigate this t-structure from the perspective of localisation theory in Grothendieck categories. To this end, we consider the functor category $\Mod{(\D^\c)}$ of   $\D^\c$. Here $\D^\c$ denotes the subcategory of $\Der{A}$ formed by the compact objects,
and  $\Mod{(\D^\c)}$ is the category of additive contravariant functors from $\Mod{(\D^\c)}$ to the category of abelian groups. It is well known that $\Mod{(\D^\c)}$ is a locally coherent Grothendieck category.

When we have a locally coherent Grothendieck category $\G$, we can 
consider  the set $\Sp{\G}$ formed by the isomorphism classes of indecomposable injective objects in $\G$. It becomes a topological space, called the spectrum of $\G$,  
by taking as a basis of open sets 
the collection of sets $$\open{C} = \{E \in \Sp{\G} \mid \Hom{\G}(C, E) \neq 0 \}$$ where $C$ runs through the finitely presented objects in $\G$. There is  a bijection between
the closed sets of $\Sp{\G}$ and the hereditary torsion pairs (i.e.~torsion pairs whose torsion class is closed under subobjects)   of finite type in $\G$. This bijection takes a closed set $\E$ to the torsion pair $({}^{\perp_0}\E,\Cogen\E)$ in $\G$ cogenerated by $\E$. Then ${}^{\perp_0}\E$ is a localising subcategory and one can construct the quotient  $\G/{}^{\perp_0}\E$, which is again a locally coherent Grothendieck category. We refer to \cite{Herzog:97,Krause:97} for details. 

 Let us return to our t-structure. \textit{From now on $A$ denotes an artinian ring.} We fix  a  torsion pair  $\tpair{t}{f}$ in $\tors{A}$, and we use the following notation.
 \begin{itemize}  
\item $\mathfrak t=\Tpair{T}{F}$ is the associated torsion pair  in $\Tors{A}$,
\item  $\sigma_{\mathfrak t}$ is the associated  cosilting complex,
\item $\N_{\mathfrak t}$ is the associated maximal rigid set,
\item  $\heart{\mathfrak t}$ is  the  heart  of the t-structure $\mathbb T_{\mathfrak t}:=({}^{\perp_{\leq 0}}\sigma_{\mathfrak t}, {}^{\perp_{> 0}}\sigma_{\mathfrak t})$,
\item $\H{\mathfrak t}:\Der{A}\longrightarrow\heart{\mathfrak t}$ is the cohomological functor.
\end{itemize}

The Yoneda functor $\y:\Der{A}\longrightarrow \Mod{(\D^\c)}$  maps the pure-injective object $\sigma_{\mathfrak t}$ to an injective object $E:=\y\sigma_{\mathfrak t}$ in the functor category $\Mod{(\D^\c)}$ and induces a homeomorphism
$$ \y \colon \Zg{\Der{A}} \to \Sp{\Mod{(\D^\c)}}$$ which takes the closed set $\N_{\mathfrak t}$ to the set $\Prod E\cap\Sp{\Mod{(\D^\c)}}$. This means that $E$ cogenerates a hereditary torsion pair of finite type $({}^{\perp_0}E,\Cogen{E})$  in $\Mod{(\D^\c)}$, and we can consider the  localisation  $q:\Mod{(\D^\c)}\longrightarrow\Mod{(\D^\c)}/{}^{\perp_0}E$.
It is shown in \cite{AngeleriMarksVitoria:17} that 
$\Mod{(\D^\c)}/{}^{\perp_0}E$ is equivalent to the heart $\heart{\mathfrak t}$ of $\mathbb T_{\mathfrak t}$ via the functor $F$ in the following commutative diagram:
	\[\xymatrix{ \Der{A} \ar[r]^-\y \ar[d]_{\H{\mathfrak t}} & \Mod{(\D^\c)} \ar[r]^-q & \Mod{(\D^\c)}/{}^{\perp_0}E \ar[dll]^-F \\ \heart{\mathfrak t} & & }\]

 We thus obtain the following result.

\begin{proposition}  \cite[Prop.~3.2]{ALS1}\label{propheart} The   heart $\heart{\mathfrak t}$ of $\mathbb T_{\mathfrak t}$ is a locally coherent Grothendieck category, and the cohomological functor  $\H{\mathfrak t}:\Der{A}\longrightarrow\heart{\mathfrak t}$ induces a homeomorphism between $\N_{\mathfrak t}$ and  $\Sp{\heart{\mathfrak t}}$.
\end{proposition}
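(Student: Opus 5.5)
The plan is to deduce both assertions from the commutative diagram preceding the statement, i.e.\ from the equivalence $F\colon \Mod{(\D^\c)}/{}^{\perp_0}E \to \heart{\mathfrak t}$ of \cite{AngeleriMarksVitoria:17}, with $E=\y\sigma_{\mathfrak t}$, together with the general localisation theory for locally coherent Grothendieck categories recalled above.

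For the first assertion, $\Mod{(\D^\c)}$ is locally coherent Grothendieck. By Proposition~\ref{propcosilt}(1) the complex $\sigma_{\mathfrak t}$ is pure-injective, so $E$ is injective. By Proposition~\ref{propcosilt}(3) the set $\N_{\mathfrak t}$ is Ziegler closed with $\Prod\N_{\mathfrak t}=\Prod\sigma_{\mathfrak t}$. Transporting along the homeomorphism $\y\colon\Zg{\Der{A}}\to\Sp{\Mod{(\D^\c)}}$, the set $\E:=\Prod E\cap\Sp{\Mod{(\D^\c)}}=\y(\N_{\mathfrak t})$ is closed in the spectrum. I also need that $\Cogen E=\Cogen\E$, i.e.\ $\Prod E=\Prod\E$. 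This follows because $\y$ preserves products and $\y(\Prod\sigma_{\mathfrak t})=\Prod E$: $\y$ is fully faithful on pure-injectives and restricts to an equivalence between pure-injectives of $\Der{A}$ and injectives of $\Mod{(\D^\c)}$. By the Herzog--Krause bijection, $({}^{\perp_0}E,\Cogen E)$ is then a hereditary torsion pair of finite type. Hence the quotient $\Mod{(\D^\c)}/{}^{\perp_0}E$ is again locally coherent Grothendieck, and so is $\heart{\mathfrak t}$ via $F$.

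For the homeomorphism, I use the standard description of the spectrum of a Gabriel quotient by a localising subcategory of finite type. The section functor identifies the indecomposable injectives of $\G/{}^{\perp_0}\E$ with the torsionfree indecomposable injectives of $\G$, namely those in $\E$. Moreover, $q$ induces a homeomorphism $\E\cong\Sp{\G/{}^{\perp_0}\E}$, where $\E$ carries the subspace topology. The point to check is that basic opens correspond. Every finitely presented object of the quotient has the form $q(C)$ with $C$ finitely presented (finite type), and $\Hom{}(q(C),q(I))\cong\Hom{}(C,I)$ for $I\in\E$ injective torsionfree. Therefore $\open{q(C)}$ corresponds to $\open{C}\cap\E$.

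Composing $\y|_{\N_{\mathfrak t}}$, then $q$, then $F$ gives a homeomorphism $\N_{\mathfrak t}\to\Sp{\heart{\mathfrak t}}$. By commutativity of the diagram this composite equals $\omega\mapsto\H{\mathfrak t}(\omega)$. As a sanity check, $\H{\mathfrak t}(\omega)$ is injective in the heart for $\omega\in\Prod\sigma_{\mathfrak t}$, since $\Hom{}(X,\omega)\cong\Hom{\heart{\mathfrak t}}(\H{\mathfrak t}X,\H{\mathfrak t}\omega)$ for $X$ in the heart. I expect the main obstacle to be the rigorous identification of the quotient spectrum with $\E$ together with its topology, including the fact that finitely presented objects of the quotient lift. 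If needed, I would simply cite Herzog and Krause \cite{Herzog:97,Krause:97} for this step.
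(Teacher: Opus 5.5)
Your proposal is correct and takes essentially the paper's route. The paper also deduces the proposition from three ingredients: the Yoneda homeomorphism $\y\colon\Zg{\Der{A}}\to\Sp{\Mod{(\D^\c)}}$ carrying $\N_{\mathfrak t}$ to $\Prod E\cap\Sp{\Mod{(\D^\c)}}$; the hereditary torsion pair of finite type cogenerated by $E=\y\sigma_{\mathfrak t}$; and the equivalence $F$ of the Gabriel quotient with $\heart{\mathfrak t}$, via commutativity of the diagram. You additionally spell out the Herzog--Krause identification of the quotient's spectrum with the closed set $\y(\N_{\mathfrak t})$ in its subspace topology, which the paper leaves to the cited references.
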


Recall that $\mathbb T_{\mathfrak t}$ is the HRS-tilt of the standard t-structure at the torsion pair $\mathfrak{t}$. So   there is a tilted  torsion pair $(\F, \T[-1])$  in $\heart{\mathfrak t}$.
Moreover, by \cite{Saorin:2017}  the class of finitely presented objects in $\heart{\mathfrak t}$ is $$\fp{\heart{\mathfrak t}}=\heart{\mathfrak t}\cap D^b(\mod{A}).$$
In particular, if  $\mathbf f\not=0$,
the modules in   $\mathbf f$ become finitely presented torsion objects in $\heart{\mathfrak t}$. But every finitely generated object in $\heart{\mathfrak t}$ has a maximal subobject, hence a simple factor. So we conclude that $\heart{\mathfrak t}$ contains simple torsion objects.

Let us describe the simple objects in $\heart{\mathfrak t}$.
 We say that a non-zero module $ B $ is \textbf{torsionfree almost torsion} with respect to the torsion pair $\mathfrak t$ if the following conditions are satisfied:
\begin{enumerate}
\item[(i)] $ B \in \mathcal{F} $, but every proper quotient of $ B $ is contained in $ \mathcal{ T } $; 
\item[(ii)] in every short exact sequence $ 0 \to B \to F \to M \to 0 $ with $ F \in \mathcal{F} $ we have $ M \in \mathcal{F} $.
\end{enumerate} 
Dually, one defines \textbf{torsion, almost torsionfree} modules.
In both cases $B$ is a {brick}.

\begin{proposition}\label{simples in the heart}  
\cite[Theorem 3.6]{AHL}  The simple objects in  $\heart{\mathfrak t}$  are precisely\\
- the objects of the form $F$ where $F\in\Mod{A}$ is torsionfree almost torsion with respect to $\mathfrak{t}$, and\\  
- the objects of the form $T[-1]$   where $T\in\Mod{A}$ is torsion almost torsionfree with respect to $\mathfrak{t}$.
\end{proposition}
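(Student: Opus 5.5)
We use the tilted torsion pair $(\F,\T[-1])$ in $\heart{\mathfrak t}$, the HRS-tilt of $\ststr{}$ at $\mathfrak t$. The plan is to show that every simple $S$ of $\heart{\mathfrak t}$ lies in $\F$ or in $\T[-1]$, and then to translate simplicity into the module-theoretic conditions.

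Every object $X\in\heart{\mathfrak t}$ sits in a short exact sequence $0\to \mathrm{H}^0(X)\to X\to \mathrm{H}^1(X)[-1]\to 0$ in $\heart{\mathfrak t}$ (from the standard truncation triangle), with $\mathrm{H}^0(X)\in\F$ and $\mathrm{H}^1(X)\in\T$. So a simple $S$ is either $F\in\F$ or $T[-1]$ with $T\in\T$.

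Short exact sequences in $\heart{\mathfrak t}$ are triangles in $\Der{A}$ with all three terms in the heart. Taking standard cohomology of such a triangle gives the six-term exact sequence
$$0\to \mathrm{H}^0(X')\to \mathrm{H}^0(X)\to \mathrm{H}^0(X'')\to \mathrm{H}^1(X')\to \mathrm{H}^1(X)\to \mathrm{H}^1(X'')\to 0.$$

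\emph{Case $F\in\F$.} A subobject $X'\hookrightarrow F$ with quotient $X''$ has $\mathrm{H}^1(X')=0$ and $\mathrm{H}^1(X'')=0$, because $\mathrm{H}^1(F)=0$ and $\mathrm{H}^1(X'')$ is a quotient of it. So $X',X''$ are modules in $\F$, and the sequence reads $0\to X'\to F\to X''\to \mathrm{H}^1(X')=0$. Hence subobjects of $F$ in the heart are exactly submodules $F'\subseteq F$ with $F/F'\in\F$. Note $F'\in\F$ is automatic, since $\F$ is closed under submodules.

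A simple object must also admit no epimorphism from an object of type $T[-1]$. The fuller description is that the subobjects of $F$ in the heart are the objects $X$ with $\mathrm{H}^0(X)\subseteq F$ and $\mathrm{H}^1(X)\in\T$ arising as cokernels of $\mathrm{H}^0(X)\hookrightarrow F\to$ torsion. Concretely, taking $X'$ to be a general object, the sequence $0\to \mathrm{H}^0(X')\to F\to \mathrm{H}^0(X'')\to \mathrm{H}^1(X')\to 0$ lets us read off the conditions:
\begin{itemize}
\item[$\cdot$] $F$ admits no nonzero proper subobject $X'\in\F$. This amounts to condition (ii), applied through dual ``$\F$-closure" arguments.
\item[$\cdot$] $F$ admits no subobject with $\mathrm{H}^0(X')=0$, i.e.\ a monomorphism $T'[-1]\to F$ in the heart. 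Such a monomorphism corresponds to an extension $0\to F\to E\to T'\to0$ with $E=\mathrm{H}^0(X'')$. So $F$ is simple iff every such extension with $E\in\F$ forces \dots
\end{itemize}
I will organise this carefully, matching (i) with proper quotients $F\to F/F'$, and (ii) with sequences $0\to F\to E\to M\to0$. For $F/F'$ with $F'\ne0$, the torsion part of $F/F'$ gives a subobject, so (i) is necessary. Similarly, for $0\to F\to E\to M$ with $E\in\F$, the torsionfree quotient of $M$ produces a cokernel object in the heart. The case $T[-1]$ is dual, using $\T$ closed under quotients.

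The main obstacle is the bookkeeping in the heart: correctly identifying monomorphisms into $F$ (respectively epimorphisms from $T[-1]$) in terms of module extensions via the six-term sequence. Condition (ii) is exactly what excludes the subobjects of mixed type. The converse direction, showing that (i)+(ii) imply simplicity, uses the same sequence: any nonzero proper subobject $X'$ yields either a proper quotient of $F$ that is not torsion, violating (i), or an extension $0\to F\to \mathrm{H}^0(X'')\to \mathrm{H}^1(X')$ with non-torsionfree cokernel, violating (ii).
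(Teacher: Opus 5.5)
\textbf{Verdict.} Your overall route is the natural one and it does work, but the key translation step is wrong as written, and the necessity of (ii) is not actually argued. The route is: use the tilted torsion pair $(\F,\T[-1])$ to see that a simple object is some $F\in\F$ or some $T[-1]$ with $T\in\T$, then translate subobjects into module data via the six-term cohomology sequence. The paper itself only cites \cite[Thm.~3.6]{AHL}.

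\textbf{The false step.} For $0\to X'\to F\to X''\to 0$ in $\heart{\mathfrak t}$ with $F\in\F$, the six-term sequence gives $\mathrm{H}^1(X'')=0$. It only shows that $\mathrm{H}^1(X')$ is a quotient of $\mathrm{H}^0(X'')$, not that it vanishes. So the subobjects of $F$ are not just the submodules $F'$ with $F/F'\in\F$. Every extension $0\to F\to E\to T'\to 0$ with $E\in\F$ and $T'\in\T$ rotates to a short exact sequence $0\to T'[-1]\to F\to E\to 0$ in the heart. Subobjects with both $\mathrm{H}^0$ and $\mathrm{H}^1$ nonzero can occur too. These are exactly the subobjects that condition (ii) excludes, so with your first description (ii) would play no role. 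You notice this afterwards, but then assign the subobjects lying in $\F$ to condition (ii); they are governed by (i). The sentence about extensions also breaks off, so the equivalence is never stated or proved.

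\textbf{Necessity of (ii).} Your sketch points at the wrong object. The torsionfree quotient $fM$ is the cokernel in the heart of $F\to E$, and simplicity of $F$ constrains kernels of maps out of $F$, not cokernels. What you need is the following.
\begin{enumerate}
\item Given $0\to F\to E\to M\to 0$ with $E\in\F$, let $E'\subseteq E$ be the preimage of the torsion part $tM$.
\item Then $tM[-1]\to F\to E'\to tM$ is a triangle with all terms in $\heart{\mathfrak t}$. So $tM[-1]$ is a subobject of $F$; indeed it is the kernel of $F\to E$ in the heart.
\item It is not all of $F$, since its cokernel $E'\supseteq F$ is nonzero. Simplicity then forces $tM=0$.
\end{enumerate}

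\textbf{Necessity of (i).} Take the kernel of $F\to F/F'\to f(F/F')$. It lies in $\F$, contains $F'\neq 0$, and has cokernel $f(F/F')$ in the heart. Simplicity gives $f(F/F')=0$, so $F/F'\in\T$.

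\textbf{Converse.} Your argument is fine once you split according to $F'=\mathrm{H}^0(X')$.
\begin{enumerate}
\item If $F'=0$, the extension $0\to F\to \mathrm{H}^0(X'')\to \mathrm{H}^1(X')\to 0$ has nonzero torsion cokernel, contradicting (ii).
\item If $0\neq F'\subsetneq F$, then $0\neq F/F'\subseteq\mathrm{H}^0(X'')\in\F$, contradicting (i).
\item If $F'=F$ (a case you omit), then $\mathrm{H}^0(X'')\cong\mathrm{H}^1(X')\in\F\cap\T=0$, so $X'\cong F$.
\end{enumerate}

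\textbf{The $T[-1]$ case.} Calling it ``dual'' is legitimate only if the $F$-case is proved for an arbitrary abelian category and then applied to $(\Mod{A})^{\mathrm{op}}$. Directly: every subobject of $T[-1]$ has the form $T'[-1]$ with $T'\in\T$. The sequence $0\to\mathrm{H}^0(X'')\to T'\to T\to\mathrm{H}^1(X'')\to 0$ shows that (i$'$) controls the image of $T'\to T$ and (ii$'$) controls its kernel.
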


Observe that   torsionfree almost torsion modules always exist by the argument above, but they need not be finitely generated,  while torsion almost torsionfree modules are always finitely generated, but need not exist.
Moreover, in the category $\mod{A}$ over a finite dimensional algebra $A$, these modules coincide with  the  minimal extending and minimal coextending modules  considered in \cite{BarnardCarrollZhu:19}, and   they label minimal inclusions of torsion pairs. We will come back to this later.

\begin{example}\label{kron}
 Let  $ A $ be the Kronecker algebra, i.e.~the path algebra of the quiver $Q:\xymatrix{\bullet\ar@<-.7ex>[r]\ar@<.7ex>[r]&\bullet}$ over an algebraically closed field $k$. Let $\mathbf{p}$, $\mathbf{r}$, and $\mathbf q$  denote the classes of indecomposable  preprojective,  regular, and  preinjective modules, respectively.

Consider the torsion pair $\tpair{t}{f}:=\tpair{\add{\mathbf{r}\cup\mathbf{q}}}{\add{\mathbf p}}$ in $\mod{A}$, and let 
$\mathfrak{t}=\Tpair{T}{F}$ be the associated torsion pair in $\Tors{A}$. The torsion almost torsionfree modules with respect to $\mathfrak{t}$ are the simple regular modules, and the only  torsionfree almost torsion module is the (infinite dimensional) generic module $G$. 

Now consider the torsion pair $\tpair{u}{v}:=\tpair{\add{\mathbf{q}}}{\add{\mathbf p\cup\mathbf{r}}}$ in $\mod{A}$, and let 
$\mathfrak{u}=\Tpair{U}{V}$ be the associated torsion pair in $\Tors{A}$. The  torsionfree almost torsion modules  with respect to $\mathfrak{u}$ are the simple regular modules, and there are no torsion almost torsionfree modules. 
\end{example}

\section{Neg-isolated points}\label{negis}
 Our next aim is to determine the injective envelopes of simple objects in the heart $\heart{\mathfrak t}$. Notice that the injective envelope of a simple object $B$  gives rise to an object in $\Sp{\heart{\mathfrak t}}$ which, according to Proposition~\ref{propheart}, is the image of a point in the maximal rigid set $\N_{\mathfrak t}$ under the homeomorphism induced by $\H{\mathfrak t}$. We will call this point of $\N_{\mathfrak t}$  the \textbf{$\N_{\mathfrak t}$-injective envelope} of $B$.

We fix again  a torsion pair  $\tpair{t}{f}$ in $\tors{A}$ and adopt the  notation from \S~\ref{hearts}.

Recall that a morphism $ f : N \to \overline{N} $ in $ \mathcal{F} $ is \textbf{left almost split in $ \mathcal{F} $}  if it is not a split monomorphism and  every morphism $ g : N \to N' $ in $\F$ which is not a split monomorphism factors through $f$.
 We say that $N\in\F$ is \textbf{neg-isolated} in $\F$ if there exists a left almost split map $N\to\overline{N}$ in $\F$.
Moreover, 
 $N\in\F$ is called \textbf{critical} in $\F$ if there exists a left almost split map $N\to\overline{N}$ in $\F$ that is an epimorphism. This implies that $N\in\F \cap \F^{\perp_1}$.
Finally, a module $N\in\F\cap\F^{\perp_1}$ is called \textbf{special} in $\F$ if there exists a left almost split map  $N\to\overline{N}$ in $\F$ that is a monomorphism.
  It follows from \cite[Lem.~4.3]{AHL} that  every module $N\in \F\cap\F^{\perp_1}$ that is neg-isolated in $\F$ is either critical or special. 

There is a nice interplay between simple objects in  $\heart{\mathfrak t}$ and neg-isolated modules in $\F$. Recall from \S~\ref{tp} that every module admits an $\F$-cover and an $\F^{\perp_1}$-envelope. The latter is always injective, whereas the former need not be surjective. In fact, $M$ has a surjective $\F$-cover if and only if $M$ is a module over $A/\Ann{\F}$, where $\Ann{\F}$ denotes the annihilator of $\F$.  
 \begin{proposition}\label{las} \cite[Prop.~5.10]{ALS1}\cite[Prop.~4.6]{ALS2}
 Let the notation be as in \S~\ref{hearts}. 
 \begin{enumerate}
\item Given a short exact sequence
$
\begin{tikzcd}
0 \arrow[r] & F \arrow[r, "f"] & N \arrow[r, "a"] & \overline{N} \arrow[r] & 0
\end{tikzcd}
$ in $\Mod{A}$, we have that $ F $ is torsionfree, almost torsion with respect to $\mathfrak{t}$ and $ f $ is its $ \F^{\perp_1}-$envelope if and only if $N$ is critical in $\F$ and $a$ is a left almost split map in $\F$. 
\item  Given a short exact sequence  
$
\begin{tikzcd}
0 \arrow[r] & N \arrow[r, "b"] & \overline{N} \arrow[r, "g"] & T \arrow[r] & 0
\end{tikzcd}
$ in $\Mod{A}$, we have that $ T $ is torsion, almost torsionfree with respect to $\mathfrak{t}$ and $ g $ is its $ \F-$cover if and only if $N$ is special in $\F$ and $b$ is a  left almost split map in $\F$. 
\item If $ T $ is torsion, almost torsionfree with respect to $\mathfrak{t}$  and its $ \mathcal{F}-$cover 
is not surjective, 
then $ T $ has a unique maximal submodule $L$, and the embedding $g:L\hookrightarrow T$ 
 is an $\F$-cover.
 
\end{enumerate}
\end{proposition}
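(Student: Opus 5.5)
Parts (1) and (2) are dual in spirit; in each we check both implications directly from the definitions, using the approximation properties in $\Mod{A}$ from Theorem~\ref{finite type} and Remark~\ref{envel}. Part (3) is obtained by analysing a non-surjective $\F$-cover.

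For (1), forward direction: let $F$ be torsionfree almost torsion and $f\colon F\to N$ its $\F^{\perp_1}$-envelope. Remark~\ref{envel} gives $\overline N\in\F$. Since $F\in\F$ and $\F$ is closed under extensions, $N\in\F\cap\F^{\perp_1}$. We first show $a$ is not split. If it were, $f$ would be a split mono, so $F\in\F^{\perp_1}$ and $f$ is an isomorphism by minimality, forcing $\overline N=0$. I then expect to exclude this case using the hypothesis on $F$: the universal-extension argument below would make $F$ Ext-injective in $\F$, and condition (ii) (quotients of $F$ by nonzero submodules being torsion) leads to a contradiction. Next, given a non-split-mono $g\colon N\to N'$ in $\F$, consider $gf\colon F\to N'$.
\begin{itemize}
\item If $gf$ is not injective, its image is a proper quotient of $F$ lying in $\T\cap\F=0$, so $gf=0$ and $g$ factors through $a$.
\item If $gf$ is injective, condition (ii) gives $N'/F\in\F$. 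Using $N\in\F^{\perp_1}$ and the envelope property, one obtains a map $N'\to N$ over $F$. By left minimality of $f$, the composite $N\to N'\to N$ is an automorphism, so $g$ is a split mono, a contradiction.
\end{itemize}
Hence $a$ is left almost split and surjective, so $N$ is critical.

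Converse of (1): given $N$ critical with $a$ left almost split, set $F=\ker a$. Then $F\in\F$ since $\F$ is closed under subobjects. For a nonzero submodule $U\le F$, push out along $F\to F/U$ to get $0\to F/U\to N/U\to \overline N\to 0$. If $F/U$ had a nonzero torsionfree quotient $F/U'$, the induced map $N\to N/U'$ would lie in $\F$ after the appropriate argument, and it is not a split mono; so it factors through $a$ and hence kills $F$, a contradiction. Condition (ii) follows by the same pushout technique. The fact that $f$ is an $\F^{\perp_1}$-envelope is checked from $N\in\F^{\perp_1}$, $\overline N\in\F$ (so $f$ is an approximation, via Wakamatsu-type reasoning) together with minimality, which comes from $a$ being left almost split (an endomorphism of $N$ fixing $F$ cannot be a non-automorphism).

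Part (2) is treated the same way, with $\F$-covers replacing envelopes. Given $T$ torsion almost torsionfree and its $\F$-cover $g$, the kernel $N$ lies in $\F\cap\F^{\perp_1}$ by Wakamatsu's Lemma. One shows $b$ is left almost split as follows: a non-split-mono $h\colon N\to N'$ is pushed out along $b$, and the resulting extension of $T$ by $N'$ is analysed using the almost torsionfree property of $T$ (every proper submodule of $T$ is torsionfree). The converse reverses this, as in (1).

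For (3), write $g\colon \overline N\to T$ for the $\F$-cover. Its image is a proper submodule of $T$, hence torsionfree. Every proper submodule $L'$ of $T$ is torsionfree, so $L'\hookrightarrow T$ factors through $g$ and therefore $L'\subseteq \operatorname{im} g$. Thus $L:=\operatorname{im} g$ is the unique maximal submodule. Since $L\in\F$ and every map from $\F$ to $T$ lands in a proper submodule, hence in $L$, the inclusion $L\hookrightarrow T$ is an $\F$-approximation. It is minimal because it is injective, so it is the $\F$-cover.

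The main obstacle is the converse directions, specifically proving minimality of the approximations and condition (ii). I would try the pushout arguments first.
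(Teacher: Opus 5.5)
Your arguments for (1) and (3) are essentially correct, but (2) has a genuine gap, most seriously in the converse. The remark ``the converse reverses this, as in (1)'' overlooks that in (1) the epimorphism $a$ is automatically left minimal, whereas a left almost split monomorphism $b$ need not be. Without left minimality the converse of (2) is false. For $0\neq Z\in\F$, the map $\binom{b}{0}\colon N\to\overline N\oplus Z$ is again a left almost split monomorphism in $\F$, since a non-split mono $h=kb$ equals $(k,0)\binom{b}{0}$. But its cokernel $T\oplus Z$ is not torsion; in Example~\ref{kron2} take $N=Z=S[-\infty]$.

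So $b$ has to be assumed \emph{minimal} left almost split, and this minimality is the idea your converse is missing. With it, the argument runs as follows.
\begin{enumerate}
\item For $N\subseteq K\subseteq\overline N$, the inclusion $j\colon N\to K$ is either a split mono or $j=\psi b$. In the latter case $\iota\psi b=b$ for $\iota\colon K\hookrightarrow\overline N$, so minimality forces $K=\overline N$.
\item Taking $K$ to be the preimage of a proper submodule $T'$ of $T$ gives $K\cong N\oplus T'$, so $T'\in\F$.
\item Taking $K=\ker(\alpha g)$ for $\alpha\colon T\to Y\in\F$, a retraction of $j$ would extend to a retraction of $b$, because $\overline N/K\in\F$ and $N\in\F^{\perp_1}$. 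Hence $K=\overline N$, $\alpha=0$, and $T\in\T$.
\item Condition (ii) for $T$ needs one more step. Reduce to $0\to M\to T''\to T\to 0$ with $M\in\F$ and $T''\in\T$, and pull back along $g$ to obtain $0\to N\to P\to T''\to 0$ with $P\in\F$. The map $N\to P$ is not split (else $T''\in\T\cap\F=0$), so it factors through $b$, and minimality splits $P\to\overline N$. Thus $T''$ is the pushout of $0\to N\to\overline N\to T\to 0$ along some $\gamma\colon N\to M$. Either $\gamma$ factors through $b$, making $M$ a summand of $T''$ and hence zero, or $\gamma$ is a split mono, making $0\neq\overline N\in\F$ a summand of $T''$, which is absurd.
\item Finally, $g$ is an $\F$-precover since $N\in\F^{\perp_1}$, and it is right minimal because $\End{A}(N)$ is local.
\end{enumerate}

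The forward direction of (2) is also incomplete. After pushing out along a non-split mono $h\colon N\to N'$, you must show that $0\to N'\to P\to T\to 0$ splits, and this requires distinguishing whether the torsion part $t(P)$ maps onto $T$.
\begin{enumerate}
\item If it does, it is condition (ii) on $T$, not the condition on proper submodules, that gives $t(P)\cap N'\in\T\cap\F=0$ and hence a splitting.
\item If it does not, its image is a proper submodule of $T$, hence lies in $\T\cap\F=0$, so $P\in\F$. Then $P\to T$ factors through $g$, and right minimality of the $\F$-cover makes the induced endomorphism of $N$ an automorphism. Since that endomorphism factors through $h$, $h$ is a split mono after all.
\end{enumerate}
Your sketch uses neither condition (ii) nor the minimality of $g$.

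There are two smaller points in (1). First, $a$ fails to be a split mono simply because $\ker a=F\neq 0$. The case $\overline N=0$ cannot be excluded: it occurs whenever $F\in\F^{\perp_1}$, e.g.\ $F$ simple injective and $\T=0$. So the contradiction you plan to derive there does not exist. Second, in the converse, concluding from $\varphi f=f$ that $\varphi\in\End{A}(N)$ is invertible requires that split monic endomorphisms of $N$ be automorphisms. This holds because $N$ is indecomposable: two complementary nonzero idempotents would both factor through $a$, making $a$ a split mono.
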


We can now compute the   $\N_{\mathfrak t}$-injective envelopes. 
If $\mu\in\N_{\mathfrak t}$ is the $\N_{\mathfrak t}$-injective envelope of 
a simple object $B$ in $\heart{\mathfrak t}$, then $H^0_{\mathfrak t}(\mu)$
 is the unique element  of $\Sp{\heart{\mathfrak t}}$ with $\Hom{\heart{\mathfrak t}}(B, H^0_{\mathfrak t}(\mu))\not=0$.  
 Since $\Hom{\Der{A}}(B, \mu)\cong\Hom{\heart{\mathfrak t}}(B, H^0_{\mathfrak t}(\mu))$, we are thus looking for the unique element $\mu\in\N_{\mathfrak t}$ with $\Hom{\Der{A}}(B, \mu)\not=0$. 
  
Suppose that $B=F$ as in case (1) above. It is easy to see that the   minimal injective copresentation $\mu_N$ of the critical module $N$ associated to $F$   lies in $\N_{\mathfrak t}$ and is unique with $\Hom{\Der{A}}(F,\mu_N)\cong \Hom{A}(F,N)\not=0$.  Similarly, in case (2) the special module  $N$ gives rise to the unique element  $\mu_N\in\N_{\mathfrak t}$ such that $\Hom{\Der{A}}(T[-1],\mu_N)\not=0$.
Finally, in case (3),   the injective envelope $I$ of the simple $ A-$module $T/L$ 
induces the unique element $I[-1]\in\N_\mathfrak{t}$ such that $\Hom{\Der{A}}(T[-1],I[-1])\cong \Hom{A}(T, I) \ne 0 $. An indecomposable injective module that arises from a torsion, almost torsionfree module as in case (3) above  will be called \textbf{special}. For a characterisation of such injective modules we refer to \cite[\S 4]{ALS2}. Over a finite dimensional algebra they are precisely the indecomposable injective modules in $\F^{\perp_0}$.

\begin{proposition}\cite[Cor.~4.8]{ALS2} \mbox{The  $\N_{\mathfrak t}$-injective envelopes of simples in $\heart{\mathfrak t}$ are given as follows.} 
$$\begin{array}{c|c|c}
\text{brick} &\textrm{simple object in}\;\heart{\mathfrak t} & \textrm{$\N_{\mathfrak t}$-injective envelope}  \\
\hline\hline
\text{$F$ torsionfree almost torsion}  &F& \mu_N\; \text{with $N$  critical}\\
\hline
\text{$T$ torsion almost torsionfree, surj.~$\F$-cover}  & T[-1] &\mu_N\;\text{with $N$ special}\\
\hline
\text{$T$ torsion almost torsionfree, inj.~$\F$-cover} &T[-1]& I[-1]\;\text{with $I$  special }
 \end{array}$$
  
\end{proposition}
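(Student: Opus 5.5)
By Proposition~\ref{simples in the heart}, every simple object of $\heart{\mathfrak t}$ is of one of two shapes: $F$ with $F$ torsionfree almost torsion, or $T[-1]$ with $T$ torsion almost torsionfree. In the second case the $\F$-cover of $T$ is either surjective or not; when it is not, it is injective by Proposition~\ref{las}(3). This gives exactly the three rows of the table, so I would argue row by row.

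In each row the strategy is the same. By Proposition~\ref{propheart}, the $\N_{\mathfrak t}$-injective envelope of a simple $B$ is the unique $\mu\in\N_{\mathfrak t}$ with $\Hom{\Der{A}}(B,\mu)\cong\Hom{\heart{\mathfrak t}}(B,\H{\mathfrak t}(\mu))\neq 0$. To see why, note that $\H{\mathfrak t}(\mu)$ is an indecomposable injective of $\heart{\mathfrak t}$, and a nonzero map from a simple into it forces it to be the injective envelope. The adjunction-type isomorphism holds because $B\in\heart{\mathfrak t}$ and $\mu$ lies in the coaisle with $\Hom{\Der{A}}(\heart{\mathfrak t},\mu)=\Hom{}(\H{\mathfrak t}-,\H{\mathfrak t}\mu)$, since $\Hom(\X[-1]... ,\mu)$ vanishes appropriately. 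So in each row it suffices to produce one candidate $\mu$ in $\N_{\mathfrak t}$ with a nonzero map from $B$.

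The first step is to check that each candidate lies in $\N_{\mathfrak t}$, that is, in $\Prod\sigma_{\mathfrak t}$. Equivalently, by Proposition~\ref{propcosilt}(2), the candidate must lie in $\Y\cap\Y^{\perp_1}$ and be indecomposable pure-injective.

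\begin{itemize}
\item Rows 1 and 2. The candidate is $\mu_N$ with $N\in\F\cap\F^{\perp_1}$ neg-isolated. The cone of the minimal copresentation has $\H{}(\mu_N)=N\in\F$ and $H^1$ injective-cokernel data, so $\mu_N\in\Y$. Lemma~\ref{csigma} together with $N\in\F^{\perp_1}$ and the description $\C_\sigma=\F$ gives $\Hom{\Der{A}}(\Y,\mu_N[1])=0$. Indecomposability holds because a neg-isolated module has local endomorphism ring, as the source of a left almost split map. Pure-injectivity I would obtain from $N$ being a summand of a product of copies of the cosilting module $C$: the left almost split property, together with $\F=\Cogen C$, lets $N$ embed in some $C^I$ with the embedding split in $\F\cap\F^{\perp_1}$.
\item Row 3. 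For $I[-1]$, the object lies in $\Y$ because $I$ is special, hence in $\F^{\perp_0}$ (Hom from $\T$-part checked via $T\to T/L\to I$), and $\Hom{}(\Y,I)=0$ in degree $1$ is automatic.
\end{itemize}

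The second step is nonvanishing of the relevant Hom in each row.

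\begin{itemize}
\item Row 1. $\Hom{\Der{A}}(F,\mu_N)\cong\Hom{A}(F,N)$ since $\mu_N$ is a 2-term injective complex, and this Hom contains the envelope $f$ from Proposition~\ref{las}(1).
\item Row 2. $\Hom{\Der{A}}(T[-1],\mu_N)\cong\Ext{A}^1(T,N)$, computed via the copresentation. It contains the nonsplit class of $0\to N\to\overline N\to T\to 0$ from Proposition~\ref{las}(2).
\item Row 3. $\Hom{\Der{A}}(T[-1],I[-1])=\Hom{A}(T,I)$, which is nonzero via $T\to T/L\hookrightarrow I$.
\end{itemize}

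Uniqueness is then automatic from injective envelopes in the heart. The main obstacle is showing that the candidates genuinely lie in $\N_{\mathfrak t}$, particularly pure-injectivity of $\mu_N$ and membership of $I[-1]$. For this I would lean on the cited constructions in \cite{ALS1,ALS2} and the maximality of $\N_{\mathfrak t}$ from Proposition~\ref{propcosilt}(4). The idea is that a rigid indecomposable pure-injective compatible with all of $\N_{\mathfrak t}$ must already belong to it, so it would suffice to verify rigidity against $\sigma_{\mathfrak t}$ in both directions.
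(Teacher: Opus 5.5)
Your strategy matches the paper's. You identify the $\N_{\mathfrak t}$-injective envelope of a simple $B$ as the unique $\mu\in\N_{\mathfrak t}$ with $\Hom{\Der{A}}(B,\mu)\neq 0$. You take as candidates $\mu_N$ with $N$ critical, $\mu_N$ with $N$ special, and $I[-1]$ with $I$ the injective envelope of $T/L$, and you exhibit a nonzero map from $B$ in each case. The reduction and the nonvanishing computations are fine. In row 2 you only have an injection $\Ext{A}^1(T,N)\hookrightarrow\Hom{\Der{A}}(T[-1],\mu_N)$, not an isomorphism (the cokernel embeds in $\Hom{A}(T,\mathrm{H}^1(\mu_N))$), but that suffices. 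The real gap is in the step you yourself single out, membership of the candidates in $\N_{\mathfrak t}$. Two of your justifications there fail as written.

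First, take $\mu_N$ with $N\in\F\cap\F^{\perp_1}$. Clearly $\mu_N\in\Y$. Using the triangle $\mathrm{H}^0(Y)\to Y\to\tau^{\geq 1}Y\to\mathrm{H}^0(Y)[1]$ and $\Hom{\Der{A}}(\tau^{\geq 1}Y,\mu_N[1])=0$, the condition $\mu_N\in\Y^{\perp_1}$ reduces to $\F\subseteq\C_{\mu_N}$. Neither Lemma~\ref{csigma} nor $\C_\sigma=\F$ gives this inclusion. The latter concerns $\sigma_{\mathfrak t}$, and applying it to $\mu_N$ would presuppose $\mu_N\in\Prod{\sigma_{\mathfrak t}}$, which is circular. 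Nor is $N\in\F^{\perp_1}$ alone enough, since $\C_{\mu_N}$ can be strictly smaller than ${}^{\perp_1}N$. What works is minimality of $0\to N\to I^0\xrightarrow{d}I^1$ together with closure of $\F$ under submodules. Given $\varphi\colon M\to I^1$ with $M\in\F$, set $K=\mathrm{Im}\,d$ and $M_0=\varphi^{-1}(K)\in\F$. Since $\Ext{A}^1(M_0,N)=0$, the restriction $\varphi|_{M_0}$ lifts to $I^0$. Extending this lift to $\alpha\colon M\to I^0$ gives $(\varphi-d\alpha)(M)\cap K=0$, so $\varphi=d\alpha$ because $K$ is essential in $I^1$.

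Second, for $I[-1]$ you have interchanged the two conditions. $I[-1]\in\Y$ is trivial, since $\mathrm{H}^0(I[-1])=0$. By contrast, $\Hom{\Der{A}}(Y,I[-1][1])\cong\Hom{A}(\mathrm{H}^0(Y),I)$ is not automatically zero. It vanishes for all $Y\in\Y$ exactly when $\Hom{A}(\F,I)=0$, and ``$I$ is special, hence in $\F^{\perp_0}$'' is the statement to be proved, not a reason. It follows from $L\hookrightarrow T$ being an $\F$-cover. Suppose $0\neq h\colon M\to I$ with $M\in\F$. Then $T/L\subseteq h(M)$. Pulling back $T\to T/L$ along $h^{-1}(T/L)\to T/L$ produces $P\in\F$, an extension of $h^{-1}(T/L)$ by $L$, with a map $P\to T$ whose image is not contained in $L$. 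This contradicts the cover property.

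Finally, drop your separate pure-injectivity argument. Its splitting claim is unjustified, since the cokernel of $N\hookrightarrow C^I$ need not lie in $\F$. It is also unnecessary: once a candidate lies in $\Y\cap\Y^{\perp_1}=\Prod{\sigma_{\mathfrak t}}$, it is pure-injective because $\sigma_{\mathfrak t}$ is. With these repairs your argument is complete and agrees with the paper's.
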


\medskip

\begin{remark} \cite[Thm.~4.7(2)]{ALS2} {\rm 
We have seen in Remark~\ref{indecomp} that there   a cosilting pair $(\Z,\mathcal I)$ associated with $\tpair{t}{f}$. 
When $A$ is an artin algebra, one can use results from \cite{DemonetIyamaReadingReitenThomas:23,BarnardCarrollZhu:19} to show that the special injectives are precisely the modules  in $\mathcal I$. }
\end{remark}

We will say that a point  $\mu\in \N_{\mathfrak t}$ is \textbf{critical} in $\N_{\mathfrak t}$
if $\mu =\mu_N$ for a critical module in $\F$, and $\mu$ is \textbf{special} in $\N_{\mathfrak t}$
if $\mu =\mu_N$ for a special module in $\F$, or $\mu=I[-1]$ for a special injective module $I$. The points of $\N_{\mathfrak t}$ that are special or critical in $\N_{\mathfrak t}$ will be called  \textbf{neg-isolated} in $\N_{\mathfrak t}$. They are precisely the $\N_{\mathfrak t}$-injective envelopes of simples, and they can be detected as follows. \begin{proposition}\cite[Prop.~4.14]{ALS2} Let $E$ be an injective cogenerator of $\Mod{A}$, and let $$\mu_1 \to \mu_0 \to E \to \mu_1[1]$$ be the approximation triangle~(\ref{triangle}) constructed as in Section~\ref{cosilt} from an $\F$-cover of $E$.  
The critical elements of $\N_{\mathfrak t}$ are precisely the direct summands of $\mu_0$ that are neg-isolated in $\N_{\mathfrak t}$, and the  special elements of $\N_{\mathfrak t}$ are precisely the direct summands of $\mu_1$ that are neg-isolated in $\N_{\mathfrak t}$. \end{proposition}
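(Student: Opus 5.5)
The plan is to decompose the approximation triangle and match its summands against the three types of neg-isolated points from the table above. The triangle $\mu_1\to\mu_0\to E\to\mu_1[1]$ comes from the $\F$-cover $g\colon C_0\to E$ with kernel $C_1$. By construction, $\sigma=\mu_1\oplus\mu_0$ is a cosilting complex equivalent to $\sigma_{\mathfrak t}$. Hence every indecomposable direct summand of $\mu_0$ or $\mu_1$ lies in $\N_{\mathfrak t}$, and $\Prod\N_{\mathfrak t}=\Prod(\mu_0\oplus\mu_1)$. A neg-isolated point is therefore a summand of $\mu_0\oplus\mu_1$: it is a point of $\N_{\mathfrak t}$ and lies in $\Prod\sigma$, and indecomposable pure-injectives in $\Prod\sigma$ are summands of $\sigma$ up to the usual Ziegler-closure caveat. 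I would handle that caveat via the isolation property, since neg-isolated points are exactly the $\N_{\mathfrak t}$-injective envelopes of simples, which are isolated in the relevant sense. So the task reduces to deciding, for each neg-isolated point, whether it occurs in $\mu_0$ or in $\mu_1$.

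For a critical point $\mu_N$, take $F$ torsionfree almost torsion with $\F^{\perp_1}$-envelope $F\to N$ as in Proposition~\ref{las}(1). I would use the criterion that $\mu$ is a summand of $\mu_0$ exactly when the simple $F$ maps nontrivially into $\mu_0$ through its envelope. Concretely, $\Hom{\Der A}(F,\mu_N)\neq 0$ and $\mu_N$ is the unique point with this property. So $\mu_N$ occurs in $\mu_0$ if and only if $\Hom{\Der A}(F,\mu_0)=\Hom{A}(F,C_0)\neq0$, and occurs in $\mu_1$ if and only if $\Hom{A}(F,C_1)\ne0$. Since $F\in\F$ is nonzero and $g$ is an $\F$-cover of a cogenerator, a nonzero map $F\to E$ lifts to $C_0$, giving $\Hom{A}(F,C_0)\neq 0$. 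To exclude occurrence in $\mu_1$, I would apply $\Hom{\Der A}(F,-)$ to the triangle. Because $F$ lies in the heart and is simple there, the sequence $\Hom(F,\mu_1)\to\Hom(F,\mu_0)\to\Hom(F,E)$ is exact, and the minimality of the cover forces $\Hom(F,\mu_1)$ to contribute nothing in the simple-top multiplicity. This minimality step is the main obstacle: one must show that an $\F$-cover's kernel contains no copy of the critical $N$. I would try to do this by showing that a summand $N$ of $C_1$ would contradict that $F\to N$ is an envelope with epimorphic almost split cokernel, through a right-minimality argument.

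For special points, with $T$ torsion almost torsionfree, the simple object is $T[-1]$. Applying $\Hom(T[-1],-)$ to the triangle gives $\Hom(T[-1],E)=\Hom(T,E[1])=0$. Also $\Hom(T[-1],\mu_0)\cong\Hom(T,\mu_0[1])$, which vanishes because $\mu_0$ is the injective copresentation of $C_0$, an $\F$-cover. I would verify this via Lemma~\ref{csigma}, using that $T\in\T$ while $\C_{\mu_0}\supseteq\F$. Hence $\Hom(T[-1],\mu_1)\cong\Hom(T[-1],\mu_0)$ up to the connecting term, and the unique point receiving a nonzero map from $T[-1]$ must be a summand of $\mu_1$ and not of $\mu_0$. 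This covers both the $\mu_N$ with $N$ special and the $I[-1]$ with $I$ special.

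Combining the two cases with the preceding table gives the stated characterisation.
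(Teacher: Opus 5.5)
\paragraph{Overall assessment.} Your skeleton is sound, but the two steps that actually separate $\mu_0$ from $\mu_1$ are not established: in the critical case the exclusion is only announced, and in the special case the justification is wrong.

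\paragraph{What works.} A neg-isolated point is the injective envelope of a simple object of $\heart{\mathfrak t}$. It is therefore a summand of the injective cogenerator $\H{\mathfrak t}(\sigma)$. Since points have local endomorphism rings, it is then a summand of $\mu_0$ or of $\mu_1$. Whether it occurs in $\mu_i$ is detected by $\Hom{\Der{A}}(S,\mu_i)\neq 0$ for the corresponding simple $S$. Your proof that $\Hom{A}(F,C_0)\neq 0$ is also correct. The appeal to ``isolation'' is misplaced, however. In Example~\ref{kron2}, $\mu_G$ is critical in $\N_{\mathfrak t}$ but lies in the closure of the adic points. The cogenerator argument is what you need.

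\paragraph{Critical case.} Your exact sequence only restates $\Hom{A}(F,C_1)=\ker(\Hom{A}(F,C_0)\to\Hom{A}(F,E))$, and ``simple-top multiplicity'' is not an argument. The missing step is as follows.
\begin{itemize}
\item Suppose $0\neq h\colon F\to C_1$. Since $C_1\in\F^{\perp_1}$ and $f\colon F\to N$ is an $\F^{\perp_1}$-envelope, we can write $h=h'f$ with $h'\colon N\to C_1$.
\item Put $j=\iota h'\colon N\to C_0$, where $\iota$ is the inclusion of $C_1=\ker g$. Then $jf=\iota h\neq 0$.
\item If $j$ were not a split monomorphism, it would factor through the left almost split epimorphism $a$ in $\F$. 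Since $\ker a=f(F)$, this would give $jf=0$. So $j$ is a split monomorphism.
\item But $gj=0$, so $C_0$ has the nonzero summand $j(N)$ inside $\ker g$. This contradicts right minimality of the $\F$-cover $g$.
\end{itemize}

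\paragraph{Special case.} You argue that $T\in\T$ and $\C_{\mu_0}\supseteq\F$, but this does not give $T\in\C_{\mu_0}$. In fact $\Hom{\Der{A}}(\T,\mu_0[1])$ need not vanish. Counterexample: take the path algebra of $1\to 2$, with $S_2$ simple projective and $P_1$ projective-injective, and let $\mathbf{t}=\add{P_1\oplus S_1}$, $\mathbf{f}=\add{S_2}$. Then $C_0\in\mathrm{Add}(S_2)$ and $\mu_0$ is a sum of copies of $\mu_{S_2}=(P_1\to S_1)$. Hence $\Hom{\Der{A}}(S_1,\mu_0[1])\neq 0$, although $S_1\in\T$.

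The vanishing holds only for $T$ torsion almost torsionfree, and it needs an argument that uses this property.
\begin{itemize}
\item A point $I[-1]$ is never a summand of the minimal copresentation $\mu_0$ of a module.
\item Suppose a special module $N$, with left almost split monomorphism $b\colon N\to\overline N$ in $\F$, were a summand of $C_0=N\oplus C_0'$, with inclusions $\iota_N,\iota'$ and projections $p_N,p'$.
\item Extend $g\iota_N$ along $b$ using injectivity of $E$. Since $\overline N\in\F$, lift the extension through $g$. This gives $\ell\colon\overline N\to C_0$ with $g\ell b=g\iota_N$.
\item The endomorphism $e=\ell b p_N+\iota'p'$ of $C_0$ satisfies $ge=g$, so it is an automorphism by right minimality.
\item But $e$ is triangular with diagonal entries $p_N\ell b$ and $1_{C_0'}$. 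This forces $p_N\ell b$ to be invertible, so $b$ splits, a contradiction.
\end{itemize}

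With these two arguments supplied, your reduction gives the statement.
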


\section{Mutation}\label{muta}

We are now ready to discuss how the lattice structure of $\tors{A}$ is reflected in $\CosiltZg{A}$ via the isomorphism in Corollary~\ref{fundbij}. This will expressed by an operation on $\CosiltZg{A}$ that corresponds to the mutation of cosilting complexes studied in \cite{ALSV}, which in turn is inspired by the operation of silting mutation introduced in \cite{AiharaIyama}.

We say that two maximal rigid sets $\L$ and $\R$ in $\Der{A}$ are \textbf{related by mutation} (and $\R$ is a right mutation of $\L$, while $\L$ is a left mutation of $\R$) if there is a bijection $\tt m \colon \L \to \R$ such that $\tt m\mid_{\L\cap\R}$ is the identity map, and  $\lambda\in \L\setminus \R$ is taken  to an element $\rho:=\tt m(\lambda)$ given by the \mbox{exchange triangle} 
	\begin{equation}\label{eq: exchange triangle}\rho \overset{f}{\longrightarrow} \epsilon_\lambda \overset{g}{\longrightarrow} \lambda \to \Delta(\lambda)[1]\end{equation}
where $g \colon \epsilon_\lambda \to \lambda$ is a minimal right $\Prod{\L\cap\R}$-approximation of $\lambda$ and $f \colon \rho \to \epsilon_\lambda$ is a minimal left $\Prod{\L\cap\R}$-approximation of $\rho$. Equivalently, the  cosilting complexes $\sigma_{\L}$ and  $\sigma_{\R}$ associated to $\L$ and  $\R$, respectively,   are related by  cosilting mutation according to the definition in \cite{ALSV}, as shown in  \cite[Lemma 3.2]{ALS2}. 
We say that the mutation is \textbf{irreducible} if $|\L\setminus\R|= 1$.

Let us now  fix two torsion pairs $\tpair{u}{v} \leq\tpair{t}{f}$  in $\tors{A}$. They determine an  interval in $\tors{A}$ that we denote  by $[\mathbf{u}, \mathbf{t}]$.
Again, we adopt  the notation of \S~\ref{hearts}, together with the analogous notation $\mathfrak{u}, \sigma_{\mathfrak u}, \N_{\mathfrak u}$ etc. for the torsion pair $\tpair{u}{v}$. Recall that 
$\mathbb T_{\mathfrak t}=\mathbb{D}_{\mathfrak{t}^-}$ and $  \mathbb T_{\mathfrak u}=
 \mathbb{D}_{\mathfrak{u}^-}$ are HRS-tilts of the standard t-structure at $\mathfrak t$ and $\mathfrak u$, respectively. In fact, also $\mathbb T_{\mathfrak t}$ and $  \mathbb T_{\mathfrak u}$ are related by an HRS-tilt, as stated below and illustrated by the commutative diagram
$$\xymatrix{
  \mathbb{D}\ar@{~>}[d]_{\mathfrak{u}^-} \ar@{~>}[r]^{\mathfrak{t}^-} &  \; \mathbb T_{\mathfrak t}  \\
  \mathbb T_{\mathfrak u} \ar@{~>}[ur]_{\mathfrak{s}^-}
}$$
 
\begin{proposition} \cite[Prop.~7.5]{ALSV}  
The t-structure $\mathbb T_{\mathfrak t}$ in $\Der{A}$ is a right HRS-tilt of  the t-structure $\mathbb T_{\mathfrak u}$ at the torsion pair $\mathfrak s=(\mathcal S, \R)$ in the heart $\heart{\mathfrak{u}}$  with torsion class $\mathcal S = \T\cap \V$.
\end{proposition}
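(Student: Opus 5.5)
We need to show that $\mathbb T_{\mathfrak t}$ is the right HRS-tilt of $\mathbb T_{\mathfrak u}$ at a torsion pair $\mathfrak s$ in $\heart{\mathfrak u}$ whose torsion class is $\mathcal S=\T\cap\V$. The plan is to use the standard criterion for comparing nested t-structures: if $(\X,\Y)$ and $(\X',\Y')$ are t-structures with $\X[1]\subseteq\X'\subseteq\X$, then $(\X',\Y')$ is the right HRS-tilt of $(\X,\Y)$ at the torsion pair $(\X'\cap\heart{}[-1]\,[1]\ldots)$. Concretely, the tilt is at the pair $(\mathcal S,\R)$ with $\mathcal S=\heart{\mathfrak u}\cap\X'[-1]$ and $\R=\heart{\mathfrak u}\cap\Y'$. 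This criterion (a result of Polishchuk / Woolf) is standard, and I would reprove it briefly if needed. For $X\in\heart{}$ one truncates with respect to $(\X',\Y')$. Because the heart is squeezed between $\X'[-1]$ and $\Y'[1]$ up to shift, both truncation pieces stay in $\heart{}$, which produces the torsion pair. One then checks that the HRS aisle built from it coincides with $\X'$.

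So the first step is to verify the nesting. Write $\X_{\mathfrak u},\X_{\mathfrak t}$ for the aisles. From the explicit HRS description relative to $\ststr{}$, we have
$$\X_{\mathfrak t}=\{X\mid H^0(X)\in\T,\ H^k(X)=0 \text{ for } k>0\}.$$
Since $\U\subseteq\T$ (as $\mathfrak u\le\mathfrak t$, a direct consequence of Theorem~\ref{CBijection}), we get $\X_{\mathfrak u}\subseteq\X_{\mathfrak t}$. Moreover $\X_{\mathfrak t}\subseteq \D^{\le 0}\subseteq \X_{\mathfrak u}[-1]$, because $\D^{\le 0}$ consists of complexes with $H^k=0$ for $k>0$, and the shift condition on $H^1$ is vacuous in the form $H^{1}\in\U$ after the shift. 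Here I would be careful with the paper's convention $\X=\D^{<0}$-type aisles, for which $\X[1]\subseteq\X$. In either convention the chain $\X_{\mathfrak u}\subseteq\X_{\mathfrak t}\subseteq\X_{\mathfrak u}[-1]$ holds: both aisles sit between $\D^{<0}$ and $\D^{\le 0}$ in the appropriate indexing. This is exactly the hypothesis of the criterion, so the general lemma gives the HRS-tilt at $\mathcal S=\heart{\mathfrak u}\cap\X_{\mathfrak t}[\text{shift}]$.

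The second step is to identify $\mathcal S$. An object of $\heart{\mathfrak u}$ sits in a triangle $V\to X\to U'[-1]$ (tilted torsion pair $(\V,\U[-1])$), so it has $H^0(X)\in\V$, $H^1(X)\in\U$, and no other cohomology. Such an $X$ lies in the aisle $\X_{\mathfrak t}$ (suitably shifted) precisely when $H^1(X)=0$ and $H^0(X)\in\T$. Hence $X$ is a module in $\T\cap\V$, and conversely every module in $\T\cap\V$ satisfies both conditions. This gives $\mathcal S=\T\cap\V$, and $\R$ is then its right orthogonal in $\heart{\mathfrak u}$.

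The main obstacle I expect is bookkeeping of shift conventions, together with confirming that the general criterion yields literally the right tilt in the paper's sign convention ($\mathfrak s^-$ rather than the left tilt). I would settle this first by testing on the trivial case $\mathfrak u=\mathfrak t$, where $\mathcal S$ must be $0$, and on the case $\mathfrak u=(0,\Mod A)$, where $\mathcal S$ must be $\T$.
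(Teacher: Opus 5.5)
Your proposal is correct and is essentially the argument behind the cited \cite[Prop.~7.5]{ALSV}. The chain $\X_{\mathfrak u}\subseteq\X_{\mathfrak t}\subseteq\X_{\mathfrak u}[-1]$ that you verify makes $\mathbb T_{\mathfrak t}$ the right HRS-tilt of the t-structure $\mathbb T_{\mathfrak u}$ (non-degenerate, since $\D^{<0}\subseteq\X_{\mathfrak u}\subseteq\D^{<0}[-1]$) at $(\heart{\mathfrak u}\cap\X_{\mathfrak t},\,\heart{\mathfrak u}\cap\Y_{\mathfrak t})$, and your cohomology computation gives $\heart{\mathfrak u}\cap\X_{\mathfrak t}=\T\cap\V$.

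On the bookkeeping you flag: in the paper's convention the criterion should read $\X\subseteq\X'\subseteq\X[-1]$ (not $\X[1]\subseteq\X'\subseteq\X$), with torsion class $\heart{}\cap\X'$ and no shift. Your first-paragraph formula $\heart{\mathfrak u}\cap\X_{\mathfrak t}[-1]$ would instead be all of $\heart{\mathfrak u}$, because every $X\in\heart{\mathfrak u}$ has $H^1(X)\in\U\subseteq\T$. Your test case $\mathfrak u=\mathfrak t$ indeed excludes that version.
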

When $\N_\mathfrak{t}$ and $\N_\mathfrak{u}$ are related by mutation, and $\E = {\N_\mathfrak{t}}\cap {\N_\mathfrak{u}}$, one can use the exchange triangle (\ref{eq: exchange triangle}) to verify that $\mathbb T_{\mathfrak t}$ is a right HRS-tilt of $  \mathbb T_{\mathfrak u}$ at the torsion pair $({}^{\perp_0}\H{\mathfrak{u}}(\E), \Cogen{\H{\mathfrak{u}}(\E)})$  in  $\heart{\mathfrak{u}}$. Now recall that a torsion pair in a Grothendieck category is  hereditary  if and only if it is cogenerated by a class of injective objects, and $\H{\mathfrak u}(\E)$ is a class of injective objects, even a closed set in $\Sp{\heart{\mathfrak u}}$, by Proposition~\ref{propheart}. We obtain the following characterization of mutation. 
\begin{proposition}\label{heredtp} \cite[Thm.~3.5]{ALSV}  
The maximal rigid sets $\N_\mathfrak{t}$ and $\N_\mathfrak{u}$ are related by mutation if and only if the torsion pair $\mathfrak s=(\mathcal S, \R)$ in $\heart{\mathfrak{u}}$ is hereditary.
\end{proposition}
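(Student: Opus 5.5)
We prove both implications by comparing the coaisles of $\mathbb T_{\mathfrak u}$ and $\mathbb T_{\mathfrak t}$ with the HRS-tilt of $\mathbb T_{\mathfrak u}$ at a hereditary torsion pair in $\heart{\mathfrak u}$. Throughout, write $\E=\N_\mathfrak{t}\cap\N_\mathfrak{u}$. By Proposition~\ref{propheart}, $\H{\mathfrak u}$ restricts to a homeomorphism $\N_\mathfrak{u}\to\Sp{\heart{\mathfrak u}}$, so $\H{\mathfrak u}(\E)$ is a class of indecomposable injectives in $\heart{\mathfrak u}$.

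\emph{($\Rightarrow$)} Assume $\N_\mathfrak{t}$ and $\N_\mathfrak{u}$ are related by mutation ${\tt m}\colon \N_\mathfrak{u}\to\N_\mathfrak{t}$ (right mutation from $\N_\mathfrak{u}$, since $\N_\mathfrak{u}\le\N_\mathfrak{t}$). Let $\mathfrak h=({}^{\perp_0}\H{\mathfrak u}(\E),\Cogen{\H{\mathfrak u}(\E)})$, which is hereditary because it is cogenerated by injectives. The goal is to show that the right HRS-tilt of $\mathbb T_\mathfrak u$ at $\mathfrak h$ equals $\mathbb T_\mathfrak t$. Then uniqueness of the torsion pair in $\heart{\mathfrak u}$ inducing a given tilt (the torsion class is $\heart{\mathfrak u}\cap\X_{\mathfrak t}$) yields $\mathfrak s=\mathfrak h$, so $\mathfrak s$ is hereditary.

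To compare the tilts, note that both coaisles are of the form ${}^{\perp_{>0}}(\text{set})$. The coaisle of the tilt at $\mathfrak h$ consists of the objects $Y$ in the $\mathbb T_\mathfrak u$-coaisle with $\H{\mathfrak u}(Y)\in\Cogen{\H{\mathfrak u}(\E)}$. We show that its cosilting set is $\E\cup\{\rho\}_\rho$, where the $\rho={\tt m}(\lambda)$ come from the exchange triangles (\ref{eq: exchange triangle}). The key computation is this: for $\lambda\in\N_\mathfrak u\setminus\E$, the triangle $\rho\to\epsilon_\lambda\to\lambda$ with $\epsilon_\lambda\in\Prod\E$ shows that $\H{\mathfrak u}(\rho)$ is the kernel of an $\H{\mathfrak u}(\Prod\E)$-approximation of $\H{\mathfrak u}(\lambda)$. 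Hence $\rho[1]$ lies in the tilted heart, and this is exactly the shape of the injective cogenerator of an HRS-tilt at a hereditary pair.

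\emph{($\Leftarrow$)} Assume $\mathfrak s$ is hereditary. Then $\R=\Cogen{\mathcal E'}$ for a closed set $\mathcal E'\subseteq\Sp{\heart{\mathfrak u}}$ of injectives, namely the indecomposable injectives lying in $\R$. Set $\E'=\H{\mathfrak u}^{-1}(\mathcal E')\subseteq\N_\mathfrak u$. First we show $\E'\subseteq\N_\mathfrak t$. An injective of $\heart{\mathfrak u}$ lying in $\R$ lies in the coaisle of the tilt, and it is $\Hom{}(-,[1])$-orthogonal to the new coaisle, because $\Ext{}^1_{\heart{\mathfrak u}}$ into it vanishes. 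Therefore it lies in $\Y\cap\Y^{\perp_1}=\Prod\sigma_\mathfrak t$ by Proposition~\ref{propcosilt}(2). Next we show $\E'=\N_\mathfrak t\cap\N_\mathfrak u$, which is the easy reverse inclusion via Proposition~\ref{propheart}. For each $\lambda\in\N_\mathfrak u\setminus\E'$, take a minimal right $\Prod\E'$-approximation $\epsilon_\lambda\to\lambda$; it exists because $\Prod\E'$ is definable-closed and pure-injective objects admit such approximations. Complete it to a triangle and let $\rho$ be the cocone. Then $\H{\mathfrak t}$ identifies $\rho$ with an indecomposable injective of $\heart{\mathfrak t}$, built from the torsion part of $\H{\mathfrak u}(\lambda)$. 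This gives the bijection ${\tt m}$.

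The main obstacle is proving that $\rho$ is indecomposable, lies in $\N_\mathfrak t$, and that the map $\lambda\mapsto\rho$ is a bijection onto $\N_\mathfrak t\setminus\E'$. I would handle this inside the Grothendieck categories. Using the equivalences $\heart{\mathfrak t}\simeq\Mod{(\D^\c)}/{}^{\perp_0}\y\sigma_\mathfrak t$ and the analogous one for $\mathfrak u$, the statement reduces to the standard fact about hereditary torsion pairs in a locally coherent Grothendieck category: the indecomposable injectives of the tilted heart are the torsionfree injectives together with the shifts of the "torsion parts" of the torsion injectives. Minimality of the approximations then gives indecomposability and the bijection.
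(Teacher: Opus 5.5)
Your outline matches the paper's sketch of \cite[Thm.~3.5]{ALSV}. However, the substantive half, $(\Leftarrow)$, is not proved: you defer its whole content to a ``standard fact'' that is false as stated.

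\textbf{The $(\Leftarrow)$ direction.} The indecomposable injectives of the tilted heart are not the torsionfree injectives plus shifts of torsion parts of torsion injectives. In Example~\ref{kron2}, the Pr\"ufer module $S[\infty]$ is an indecomposable injective of $\heart{\mathfrak u}$ lying in $\mathcal S$. Its shift $S[\infty][-1]$ lies in $\heart{\mathfrak t}$ but is not injective there, since $\Ext{\heart{\mathfrak t}}^1(S[-\infty],S[\infty][-1])\cong\Hom{A}(S[-\infty],S[\infty])\neq 0$. The point replacing $\lambda=S[\infty]$ is the adic module $\rho=S[-\infty]$, and the exchange triangle gives a short exact sequence $0\to S[\infty][-1]\to S[-\infty]\to\epsilon_\lambda\to 0$ in $\heart{\mathfrak t}$. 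In general $\H{\mathfrak t}(\rho_\lambda)$ is an extension of $\H{\mathfrak t}(\epsilon_\lambda)$ by $\mathrm{H}^{-1}_{\mathfrak t}(\lambda)$, not a shifted torsion part.

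What hereditariness actually buys is that $\R$ is closed under injective envelopes. So the injectives in $\R$ form $\Prod{\H{\mathfrak u}(\E')}$, and every morphism from an object of $\R$ to $\H{\mathfrak u}(\lambda)$ factors through $\H{\mathfrak u}(\epsilon_\lambda)\to\H{\mathfrak u}(\lambda)$. Since $\Hom{\Der{A}}(Y,-)\cong\Hom{\heart{\mathfrak u}}(\H{\mathfrak u}(Y),\H{\mathfrak u}(-))$ on $\Prod{\sigma_{\mathfrak u}}$, this yields $\Hom{\Der{A}}(Y,\rho_\lambda[1])=0$ for all $Y\in\Y_{\mathfrak t}$. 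Also $\rho_\lambda$ is an extension of $\epsilon_\lambda$ by $\lambda[-1]\in\Y_{\mathfrak u}[-1]\subseteq\Y_{\mathfrak t}$, so $\rho_\lambda\in\Y_{\mathfrak t}\cap\Y_{\mathfrak t}^{\perp_1}=\Prod{\sigma_{\mathfrak t}}$. The same argument gives ${}^{\perp_{>0}}(\E'\cup\{\rho_\lambda\})=\Y_{\mathfrak t}$.

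Even then, three things still need the minimality arguments of \cite{ALSV}, and your proposal does not supply them:
\begin{enumerate}
\item each $\rho_\lambda$ is indecomposable;
\item $\rho_\lambda\to\epsilon_\lambda$ is a minimal left approximation;
\item $\lambda\mapsto\rho_\lambda$ is a bijection onto $\N_{\mathfrak t}\setminus\E'$. This is not automatic, because $\mathrm{Prod}$ of a set of points can contain further Ziegler points.
\end{enumerate}
Two smaller points. First, writing $\R$ as $\mathrm{Cogen}$ of a closed subset of $\Sp{\heart{\mathfrak u}}$ needs $\mathfrak s$ to be of finite type, not merely hereditary. Second, $\E'\subseteq\N_{\mathfrak t}$ should be argued for $e$ itself, not for $\H{\mathfrak u}(e)$, since $e$ need not lie in the heart. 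From $e\in\Y_{\mathfrak u}$ and $\H{\mathfrak u}(e)\in\R$ you get $e\in\Y_{\mathfrak t}$, and $\Y_{\mathfrak t}\subseteq\Y_{\mathfrak u}={}^{\perp_{>0}}\sigma_{\mathfrak u}$ gives $e\in\Y_{\mathfrak t}^{\perp_1}$.

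\textbf{The $(\Rightarrow)$ direction.} The claim that ``$\rho[1]$ lies in the tilted heart'' is false. We have $\rho\in\N_{\mathfrak t}\subseteq\Y_{\mathfrak t}$ and $\H{\mathfrak t}(\rho)\neq 0$ by Proposition~\ref{propheart}, so $\rho[1]$ has nonzero $\mathrm{H}^{-1}_{\mathfrak t}$. The sentence ``this is exactly the shape of the injective cogenerator'' appeals to the same unproved identification as above.

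You do not need that identification. By the preceding proposition (\cite[Prop.~7.5]{ALSV}), $\Y_{\mathfrak t}=\{Y\in\Y_{\mathfrak u}\mid \H{\mathfrak u}(Y)\in\R\}$, so it suffices to show $\R=\Cogen{\H{\mathfrak u}(\E)}$.
\begin{itemize}
\item For $\supseteq$: $\E\subseteq\Y_{\mathfrak t}$ forces $\H{\mathfrak u}(\E)\subseteq\R$, and $\R$ is closed under products and subobjects.
\item For $\subseteq$: take $M\in\R\subseteq\Y_{\mathfrak t}$. Then $\Hom{\Der{A}}(M,\rho[1])=0$, so applying $\Hom{\Der{A}}(M,-)$ to the exchange triangle shows that every morphism $M\to\lambda$ with $\lambda\in\N_{\mathfrak u}\setminus\E$ factors through $\epsilon_\lambda\in\Prod{\E}$. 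Now $M$ embeds in a product of copies of the injective cogenerator $\H{\mathfrak u}(\sigma_{\mathfrak u})$, and $\Hom{\Der{A}}(M,-)\cong\Hom{\heart{\mathfrak u}}(M,\H{\mathfrak u}(-))$ on $\Prod{\sigma_{\mathfrak u}}$. Hence that embedding factors through an object of $\Prod{\H{\mathfrak u}(\E)}$, so $M\in\Cogen{\H{\mathfrak u}(\E)}$.
\end{itemize}
Thus $\mathfrak s$ is cogenerated by injectives and is hereditary. This is what the paper means by verifying the tilt ``using the exchange triangle''.
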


In the situation above $\mathfrak s$
is a hereditary torsion pair of finite type, so by general results on localisation theory in locally coherent Grothendieck categories \cite{Herzog:97, Krause:97}, it is generated by a Serre subcategory  of $\fp{\heart{\mathfrak u}}$, i.~e.~a subcategory that is closed under subobjects, quotients, and extensions.  Proposition~\ref{heredtp} can then be rephrased by saying that $\N_\mathfrak{t}$ and $\N_\mathfrak{u}$ are related by mutation if and only if $\mathcal S\cap\fp{\heart{\mathfrak u}}=\mathbf{t}\cap\mathbf{v}$ is a Serre subcategory of $\fp{\heart{\mathfrak{u}}}$. 
Observe further that $\mathcal S\cap\fp{\heart{\mathfrak u}}$, being  also contained in the length category $\mod{A}$,  is a Serre subcategory if and only if it can be realised as the class $\filt\B$ of objects  that admit a finite filtration by a set $\B$ of finitely presented simple objects in $\heart{\mathfrak u}$. In other terms, $\B$ is the semibrick formed by the modules that are torsionfree almost torsion with respect to $\mathfrak u$ and belong to the torsion class $\mathbf t$. This means precisely that $\mathbf{t}\cap\mathbf{v}$ is a wide subcategory of $\mod{A}$, see \cite[Thm.~1.4]{AP} or \cite[Thm.~8.8]{ALSV}. An interval
 $[\mathbf u,\mathbf t]$  in $\tors{A}$ with this property is said to be a \textbf{wide  interval}. 

Finally, let us point out that the modules that are torsionfree almost torsion with respect to $\mathfrak u$ and belong to the torsion class $\mathbf t$ coincide with  the modules that are torsion almost torsionfree with respect to $\mathfrak t$ and belong to the torsionfree class $\mathbf v$, and also with the simple objects in the wide subcategory $\mathbf{t}\cap\mathbf{v}$, see \cite[Lem.~9.6]{ALSV}. Mutation is thus determined by a semibrick $\B$ which gives rise to two sets of finitely presented simple objects in the associated hearts, the set $\B$ in $\heart{\mathfrak u}$, and the set $\B[-1]$ in  $\heart{\mathfrak t}$. From this perspective, the HRS-tilt   at the hereditary torsion pair $\mathfrak{s}$ in $\heart{\mathfrak u}$ cogenerated by $\H{\mathfrak{u}}(\E)$  amounts to exchanging the injective envelopes of these simples. More precisely, the $\N_\mathfrak{u}$-injective envelopes of the simples in $\B$ are replaced by the
$\N_\mathfrak{t}$-injective envelopes of the simples in $\B[-1]$. In other words, we exchange critical points of $\N_\mathfrak{u}$
with special points  of $\N_\mathfrak{t}$. 
 This is summarised by the following result.
 
\begin{theorem}\cite[Thm.~8.8, Thm.~9.7, Lem.~9.11]{ALSV} \label{mut} The following statements are equivalent.
\begin{enumerate}
\item The maximal rigid sets $\N_\mathfrak{t}$ and $\N_\mathfrak{u}$ are related by mutation
\item $[\mathbf{u}, \mathbf{t}]$ is a wide interval in $\tors{A}$.

\item There exists a semibrick $\B_\mathfrak{u}$  formed by torsionfree almost torsion modules with respect to $\mathfrak{u}$ such that $\mathbf{t} \cap \mathbf{v}=\filt{\B_\mathfrak{u}}$.
\item There exists a semibrick $\B_\mathfrak{t}$ formed by torsion almost torsionfree modules with respect to $\mathfrak{t}$ \mbox{such that $(\mathbf{t}\cap \mathbf{v})[-1] =\filt{\B_\mathfrak{t}[-1]}$.}
\end{enumerate}
In this case $\B := \B_\mathfrak{u} = \B_\mathfrak{t}$, and the bijection $\tt m\colon \N_\mathfrak{u}\to \N_\mathfrak{t}$  witnessing the mutation takes the \mbox{$\N_\mathfrak{u}$-injective envelope} of $B$ to the $\N_\mathfrak{t}$-injective envelope of $B[-1]$ for each $B\in\B$  and fixes the remaining elements of $\N_\mathfrak{u}$.
\end{theorem}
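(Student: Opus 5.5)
The plan is to prove the equivalences as (1)$\Leftrightarrow$(2)$\Leftrightarrow$(3)$\Leftrightarrow$(4) and then to identify the mutation bijection. Throughout, I use the HRS-tilt picture: by \cite[Prop.~7.5]{ALSV}, $\mathbb T_{\mathfrak t}$ is the right HRS-tilt of $\mathbb T_{\mathfrak u}$ at $\mathfrak s=(\mathcal S,\R)$ with $\mathcal S=\T\cap\V$.

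For (1)$\Leftrightarrow$(2), Proposition~\ref{heredtp} says that mutation is equivalent to $\mathfrak s$ being hereditary. Any hereditary torsion pair of finite type in the locally coherent category $\heart{\mathfrak u}$ is determined by the Serre subcategory $\mathcal S\cap\fp{\heart{\mathfrak u}}$ \cite{Herzog:97,Krause:97}. Conversely, if $\mathcal S\cap \fp{\heart{\mathfrak u}}$ is Serre and $\mathcal S$ is its direct limit closure, then $\mathcal S$ is closed under subobjects. The claim I need is that $\mathcal S=\varinjlim(\mathbf t\cap\mathbf v)$; this should follow because $\T=\varinjlim\mathbf t$ and $\V=\varinjlim\mathbf v$, together with finite type of the relevant classes. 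I also need $\mathcal S\cap\fp{\heart{\mathfrak u}}=\mathbf t\cap\mathbf v$, which follows from $\fp{\heart{\mathfrak u}}=\heart{\mathfrak u}\cap D^b(\mod A)$ \cite{Saorin:2017}, since the objects of $\mathcal S$ are modules. It then remains to show that $\mathbf t\cap\mathbf v$ is Serre in $\fp{\heart{\mathfrak u}}$ if and only if it is wide in $\mod A$. In one direction, exact sequences in $\heart{\mathfrak u}$ whose terms lie in $\mathbf t\cap \mathbf v\subseteq \V\cap \mathbf{mod}A$ are exact sequences of modules, because $\V$ sits in degree $0$ of the heart. In the other direction, a subobject in the heart of a module in $\mathbf v$ is a module in $\mathbf v$ up to the $\U[-1]$ part; I need to control this carefully. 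I expect to cite \cite[Thm.~1.4]{AP} for the final identification.

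For (2)$\Leftrightarrow$(3), I would use that in a length category a Serre subcategory equals $\filt{\B}$, where $\B$ is its set of simple objects, and these are finitely presented simples of $\heart{\mathfrak u}$ lying in $\mathcal S$. By Proposition~\ref{simples in the heart}, the simples of $\heart{\mathfrak u}$ of the form $B$ (not shifted) are exactly the torsionfree almost torsion modules for $\mathfrak u$. For the converse, I would show directly that $\filt{\B_{\mathfrak u}}$, built from heart-simples, is closed under subobjects and quotients in $\heart{\mathfrak u}$ and is therefore Serre. Statement (4) is dual: I work in $\heart{\mathfrak t}$ with the left HRS-tilt at the torsion pair $(\mathcal S[-1],\ldots)$ and use the shifted simples $T[-1]$. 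The identification $\B_{\mathfrak u}=\B_{\mathfrak t}$ follows from \cite[Lem.~9.6]{ALSV}: both sets equal the simple objects of the wide category $\mathbf t\cap\mathbf v$.

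For the description of the bijection $\tt m$, I would note that $\E=\N_{\mathfrak u}\cap\N_{\mathfrak t}$ corresponds under $\H{\mathfrak u}$ to the closed set cogenerating $\mathfrak s$. The points of $\N_{\mathfrak u}\setminus\E$ are then exactly the indecomposable injectives $X$ with $\Hom{}(B,X)\neq0$ for some $B\in\B$, that is, the $\N_{\mathfrak u}$-injective envelopes of the $B\in\B$. Symmetrically, $\N_{\mathfrak t}\setminus\E$ consists of the envelopes of the $B[-1]$. Matching $\lambda$ to $\rho$ via the exchange triangle, applying $\Hom(B,-)$ and $\Hom(B[-1],-)$ to the triangle shows that the envelope of $B$ is paired with that of $B[-1]$: the approximation term $\epsilon_\lambda\in\Prod\E$ is orthogonal to $B$ in both shifts.

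The main obstacle is this last point. I must ensure that every point outside $\E$ is an injective envelope of a simple, which requires that the torsion class is generated by its finitely presented simples (finite type). I also need to verify, from the long exact sequences of the triangle, that $\Hom(B[-1],\rho)\neq0$ holds exactly when $\Hom(B,\lambda)\neq0$.
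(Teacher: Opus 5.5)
Your proposal is correct and follows essentially the same route as the paper. Both arguments reduce mutation to heredity of $\mathfrak s$ (Proposition~\ref{heredtp}). They then translate this into $\mathbf t\cap\mathbf v=\mathcal S\cap\fp{\heart{\mathfrak u}}$ being a Serre subcategory, hence equal to $\filt{\B}$ for finitely presented simples of $\heart{\mathfrak u}$, identify this with wideness via \cite[Thm.~1.4]{AP}, and match the two semibricks via \cite[Lem.~9.6]{ALSV}. Your exchange-triangle computation $\Hom{\Der{A}}(B,\lambda)\cong\Hom{\Der{A}}(B[-1],\rho)$, using $\Hom{\Der{A}}(B,\epsilon_\lambda)=0=\Hom{\Der{A}}(B,\epsilon_\lambda[1])$, combined with the finite-type argument that every point of $\N_{\mathfrak u}\setminus\E$ receives a nonzero map from some $B\in\B$, correctly makes explicit the paper's remark that the tilt at $\mathfrak s$ exchanges injective envelopes. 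One small correction: in $\heart{\mathfrak t}$ the class $\mathcal S[-1]$ is the torsionfree class of the tilted pair, not the torsion class, so (4) is most cleanly obtained from (3) via \cite[Lem.~9.6]{ALSV} rather than by formal duality.
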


Let us now turn to the Hasse quiver of $\tors{A}$.
There is an  arrow  $\mathbf t\to\mathbf u$ in the Hasse quiver if and only if the inclusion $\mathbf u\subset\mathbf t$ is  minimal. We will then say that $ \tpair{t}{f}$ \textbf{covers} $\tpair{u}{v}$  in $\tors{A}$.
This is the case when the wide subcategory $\mathbf{t} \cap \mathbf{v}$ has exactly one simple object, which is precisely  the brick label of the arrow $\mathbf t\to\mathbf u$ considered in  \cite{BarnardCarrollZhu:19, DemonetIyamaReadingReitenThomas:23}. 

\begin{corollary} \cite[Cor.~9.14]{ALSV}\label{cover}
The following statements are equivalent.
\begin{enumerate}
\item $ \tpair{t}{f}$ covers $\tpair{u}{v}$  in $\tors{A}$.
\item
 The maximal rigid sets $\N_\mathfrak{t}$ and $\N_\mathfrak{u}$ are related by irreducible mutation.
 \item  $\mathbf{t} \cap \mathbf{v} = \filt{B}$ where $B\in\mod{A}$ is torsionfree almost torsion with respect to $\mathfrak{u}$.
 \item $\mathbf{t} \cap \mathbf{v} = \filt{B}$ where $B\in\mod{A}$ is torsion almost torsionfree  with respect to $\mathfrak{t}$.
\end{enumerate}

Moreover, under the equivalent conditions above, if $\N_\mathfrak{u}\setminus\N_\mathfrak{t} = \{\lambda\}$ 
and $\N_\mathfrak{t}\setminus\N_\mathfrak{u} = \{\rho\}$, then 
 $\lambda$ is the $\N_\mathfrak{u}$-injective envelope of the simple object $B$ in $\heart{\mathfrak{u}}$  and $\rho$ is the $\N_\mathfrak{t}$-injective envelope of the simple object $B[-1]$ in $\heart{\mathfrak{t}}$.
\end{corollary}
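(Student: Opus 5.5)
The plan is to deduce Corollary~\ref{cover} from Theorem~\ref{mut} by specialising it to semibricks with exactly one element. Throughout, $\tpair{u}{v}\le\tpair{t}{f}$ is the given pair.

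First I show that (1) implies (3). If $\mathbf t$ covers $\mathbf u$, the interval $[\mathbf u,\mathbf t]$ contains only its two endpoints. Any interval with no intermediate elements is wide, either by \cite[Thm.~1.4]{AP} or by checking directly that $\mathbf t\cap\mathbf v$ is closed under kernels and cokernels. For the direct check, take a map $h\colon X\to Y$ in $\mathbf t\cap\mathbf v$. The torsion class generated by $\mathbf u$ together with $\operatorname{Coker} h$ lies between $\mathbf u$ and $\mathbf t$, and similarly for the image, so minimality forces each to equal $\mathbf u$ or $\mathbf t$. Theorem~\ref{mut} then gives a semibrick $\B$ with $\mathbf t\cap\mathbf v=\filt{\B}$. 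To see that $|\B|=1$, note that each subset $\B'\subseteq\B$ produces the torsion class $\tilde{\mathbf T}(\mathbf u\cup\B')$. This class lies in $[\mathbf u,\mathbf t]$, and distinct subsets give distinct classes, because the simples of the wide subcategory $\tilde{\mathbf T}(\mathbf u\cup\B')\cap\mathbf v$ are exactly $\B'$. Minimality therefore forces $|\B|=1$ (and $\B\neq\emptyset$ since $\mathbf u\ne\mathbf t$).

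Next I show that (3) implies (1). Suppose $\mathbf t\cap\mathbf v=\filt{B}$ and $\mathbf u\subseteq\mathbf w\subseteq\mathbf t$ is a torsion class strictly containing $\mathbf u$. Take $X\in\mathbf w\setminus\mathbf u$; its torsionfree part with respect to $\mathbf u$ lies in $\mathbf w\cap\mathbf v\subseteq\filt{B}$ and is nonzero. Any nonzero object of $\filt B$ has $B$ as a quotient, so $B\in\mathbf w$. Then $\mathbf t=\tilde{\mathbf T}(\mathbf u\cup\filt B)\subseteq\mathbf w$. The equality $\mathbf t=\tilde{\mathbf T}(\mathbf u\cup\filt B)$ holds because every $M\in\mathbf t$ is an extension of a module in $\mathbf u$ by its $\mathbf v$-part, which lies in $\mathbf t\cap\mathbf v$. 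Hence $\mathbf w=\mathbf t$ and the inclusion is minimal. The equivalence of (3) and (4) is the case $|\B|=1$ of the identification $\B_{\mathfrak u}=\B_{\mathfrak t}$ in Theorem~\ref{mut}.

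For (2), irreducible mutation means $|\N_{\mathfrak u}\setminus\N_{\mathfrak t}|=1$. By Theorem~\ref{mut} the bijection $\tt m$ moves exactly the $\N_{\mathfrak u}$-injective envelopes of the $B\in\B$ and fixes all other points. Distinct simples in $\heart{\mathfrak u}$ have non-isomorphic injective envelopes, so these envelopes are pairwise distinct. The number of moved points is therefore $|\B|$, and irreducibility is equivalent to $|\B|=1$, i.e.\ to (3). The final statement about $\lambda$ and $\rho$ is the description of $\tt m$ in Theorem~\ref{mut} applied with $\B=\{B\}$.

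The main obstacle I expect is the step in (1)$\Rightarrow$(3) showing that a minimal inclusion already yields a wide interval. If invoking \cite{AP} is not allowed, I will check directly that kernels and cokernels of maps in $\mathbf t\cap\mathbf v$ stay in $\mathbf t\cap\mathbf v$, using the minimality argument sketched above.
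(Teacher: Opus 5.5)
Your proof is correct, but it is organised differently from the paper's.

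\textbf{The paper's route.} The paper proves (1)$\Rightarrow$(2) directly in the heart $\heart{\mathfrak u}$. By the brick-labelling fact quoted just before the corollary, a cover means that the Serre subcategory $\mathcal S\cap\fp{\heart{\mathfrak u}}=\mathbf t\cap\mathbf v$ has a single simple object $B$. So the hereditary torsionfree class is $\R=B^{\perp_0}$, which contains every indecomposable injective except the injective envelope of $B$. Since $(\mathcal S,\R)$ is cogenerated by $\H{\mathfrak u}(\E)$ with $\E=\N_{\mathfrak t}\cap\N_{\mathfrak u}$, exactly one point of $\N_{\mathfrak u}$ lies outside $\E$. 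The converse is ``similar'', and the rest is read off from Theorem~\ref{mut}.

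\textbf{Your route.} You delegate all heart-level work to the description of $\mathtt m$ in Theorem~\ref{mut}, so (2)$\Leftrightarrow$(3) becomes a count of moved points. You then prove the torsion-theoretic equivalence (1)$\Leftrightarrow$(3) by hand: wideness of covering intervals from \cite{AP}, the subset argument forcing $|\B|=1$, and a self-contained proof of (3)$\Rightarrow$(1). Your version makes explicit the lattice-theoretic input that the paper quotes from \cite{BarnardCarrollZhu:19,DemonetIyamaReadingReitenThomas:23}. The paper's version explains intrinsically, through the hereditary pair $(\mathcal S,\R)$, why exactly one point is exchanged.

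\textbf{First small point.} Your count needs that the $\N_{\mathfrak u}$-injective envelope $\lambda$ of each $B\in\B$ is actually moved, i.e.\ $\lambda\notin\N_{\mathfrak t}$. The fixed points of $\mathtt m$ are exactly $\N_{\mathfrak u}\cap\N_{\mathfrak t}$, and Theorem~\ref{mut} as stated only says where $\lambda$ is sent. The fix is one line: $B\in\mathbf t\subseteq\T$ lies in the aisle ${}^{\perp_{\le 0}}\sigma_{\mathfrak t}$, so $\Hom{\Der{A}}(B,\nu)=0$ for all $\nu\in\N_{\mathfrak t}$, whereas $\Hom{\Der{A}}(B,\lambda)\neq 0$. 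This is the same fact the paper uses in the form $\Hom{\heart{\mathfrak u}}(B,\H{\mathfrak u}(\E))=0$.

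Similarly, for $|\B|=1$ you do not need the claim about the simples of $\tilde{\mathbf T}(\mathbf u\cup\B')\cap\mathbf v$. For $b\in\B\setminus\B'$ one has $\tilde{\mathbf T}(\mathbf u\cup\B')\subseteq{}^{\perp_0}b$, so $b\notin\tilde{\mathbf T}(\mathbf u\cup\B')$.

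\textbf{Second small point.} Your fallback ``direct check'' of wideness does not close as written. Knowing that $\tilde{\mathbf T}(\mathbf u\cup\{\mathrm{Coker}\,h\})$ equals $\mathbf u$ or $\mathbf t$ says nothing about whether $\mathrm{Coker}\,h\in\mathbf v$. If you want to avoid the citation, use the standard argument instead:
\begin{enumerate}
\item A nonzero $B\in\mathbf t\cap\mathbf v$ of minimal length is a brick.
\item The cover forces $\tilde{\mathbf T}(\mathbf u\cup\{B\})=\mathbf t$ and $\mathbf t\cap{}^{\perp_0}B=\mathbf u$.
\item Hence every nonzero $M\in\mathbf t\cap\mathbf v$ receives a monomorphism $B\hookrightarrow M$.
\item Let $M'\subseteq M$ be the preimage of the $\mathbf u$-torsion part of $M/B$. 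Then $M'\notin\mathbf u$ yields a nonzero map $M'\to B$, which must split $B\hookrightarrow M'$ and so forces $M'=B$.
\item Thus $M/B\in\mathbf t\cap\mathbf v$, and by induction $\mathbf t\cap\mathbf v=\filt{B}$.
\end{enumerate}
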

\begin{proof}
Condition (1) means that the  Serre subcategory $\mathcal S\cap\fp{\heart{\mathfrak u}}$ of $\fp{\heart{\mathfrak{u}}}$ is given by a single simple object $B$. The hereditary torsion pair $(\mathcal S,\R)$ in ${\heart{\mathfrak{u}}}$ is then generated by $B$, and all but one indecomposable injective objects are in the  torsionfree class  $\R=B^{\perp_0}$, namely all but the injective envelope of $B$. Now recall that $\Sp{\heart{\mathfrak{u}}}$ is in bijection with ${\N_\mathfrak{u}}$ via the cohomological functor $\H{\mathfrak{u}}$, and moreover,
$(\mathcal S,\R)$ is cogenerated by the closed set ${\H{\mathfrak{u}}(\E)}$ in $\Sp{\heart{\mathfrak{u}}}$, where $\E={\N_\mathfrak{t}}\cap {\N_\mathfrak{u}}$. We can then conclude that ${\N_\mathfrak{u}}\setminus {\N_\mathfrak{t}}={\N_\mathfrak{u}}\setminus\E$ contains precisely one object, namely the ${\N_\mathfrak{u}}$-injective envelope of $B$.
This proves (1)$\Rightarrow$(2), and the reverse implication is shown similarly. The remaining statements are immediate consequences of Theorem~\ref{mut}.
\end{proof}
\begin{example} \label{kron2}
Let $A$ be the Kronecker algebra, and let $\tpair{t}{f}=\tpair{\add{\mathbf{r}\cup\mathbf{q}}}{\add{\mathbf p}}$ and  $\tpair{u}{v}=\tpair{\add{\mathbf{q}}}{\add{\mathbf p\cup\mathbf{r}}}$ be the  two torsion pairs from  Example~\ref{kron}. They form a wide interval, as $\mathbf{t}\cap\mathbf{v}$ equals the wide subcategory $\add{\mathbf{r}}$ formed by the regular modules. 

Let us determine the maximal rigid sets associated to the two torsion pairs.
Since $A$ is hereditary,   $\Zg{\Der{A}}$ is determined, up to shift, by the Ziegler spectrum $\Zg{A}$ of $\Mod{A}$, which consists of \begin{itemize} 
\item the indecomposable finite dimensional modules, \item
a Pr\"ufer module $S[\infty]$ and
 an adic module $S[-\infty]$ for each simple regular module $S$, 
 \item 
 the generic module $G$.
 \end{itemize}
For details we refer e.g.~to \cite{CB5}.

The  torsion pair in $\Tors{A}$ associated to $\tpair{u}{v}$ is the torsion pair $\mathfrak u=\Tpair{U}{V}$  cogenerated by the collection of all Pr\"ufer modules,  and the associated maximal rigid set $\N_\mathfrak{u}$ is given by $G$ and all Pr\"ufer modules. 
Every torsionfree almost torsion module with respect to $\mathfrak u$, that is, every simple regular module $S$
admits a  $\V^{\perp_1}$-approximation sequence
$
\begin{tikzcd}
0 \arrow[r] & S \arrow[r, "f"] & S[\infty] \arrow[r, ""] & S[\infty] \arrow[r] & 0
\end{tikzcd}
$. This shows that the Pr\"ufer modules are the critical points of $\N_\mathfrak{u}$.

The  torsion pair in $\Tors{A}$ associated to $\tpair{t}{f}$ is the torsion pair $\mathfrak{t}=\Tpair{T}{F}$ cogenerated by the generic module $G$,  and the associated maximal rigid set $\N_\mathfrak{t}$ is given by $G$ and all adic modules. 
Every torsion almost torsionfree  module with respect to $\mathfrak t$, that is, every simple regular module $S$
admits an $\F$-approximation sequence
$
\begin{tikzcd}
0 \arrow[r] & S[-\infty] \arrow[r, ""] & S[-\infty] \arrow[r, "g"] & S \arrow[r] & 0
\end{tikzcd}
$. This shows that
 the adic modules are the special points of $\N_\mathfrak{t}$.

Now the mutation corresponding to the wide interval $[\mathbf u,\mathbf t]$ consists in replacing the Pr\"ufer modules by the adic modules.\end{example}

\section{Wide intervals}

To every wide interval $[\mathbf{u}, \mathbf{t}]$ in $\tors{A}$ we can associate the  set $\E={\N_\mathfrak{t}}\cap {\N_\mathfrak{u}}$ which is closed and  rigid in $\ZgInt{A}$. It consists of the points that are not touched by the mutation process, as  seen above. The aim of this section is to show that this assignment defines a bijection between wide intervals and  closed rigid sets in $\ZgInt{A}$.

Note that every closed rigid set $\M$ in $\ZgInt{A}$ induces two torsion pairs of finite type in $\Mod{A}$.  
The first torsion pair $\mathfrak{u}_\M=\Tpair{U}{V}$  has torsionfree class $\V=\bigcap_{\mu\in\M}\C_\mu$, and the second is the  torsion pair $\mathfrak{t}_\M=\Tpair{\T}{\F}$ cogenerated by the set ${\H{}(\M)}$. Since $\M$ is rigid, one can use  Lemma~\ref{csigma} to show that $\F=\Cogen{\H{}(\M)}\subseteq\V$, so $\U\subseteq\T$. 

Now we can again consider the HRS-tilts of the standard t-structure at $\mathfrak{u}_\M$ and $\mathfrak{t}_\M$, respectively, and remember that they are related by a HRS-tilt 
at the torsion pair $\mathfrak s=(\mathcal S, \R)$ in the heart of $\mathbb D_{\mathfrak u_\M^-}$ with torsion class $\mathcal S = \T\cap \V$. 
It turns out that  $\mathfrak{s}$ is a hereditary torsion pair of finite type,
so we conclude from Proposition~\ref{heredtp} and Theorem~\ref{mut} that $\mathfrak{u}_\M$ and $\mathfrak{t}_\M$ restrict to a wide interval which we will denote by $[\mathbf u_\M,\mathbf t_\M]$.
One verifies that, under the lattice isomorphism   between $\tors{A}$ and $\CosiltZg{A}$,  this interval is  taken to the set of completions of $\M$ to a maximal rigid set.

\begin{theorem}\cite[Thm.~5.1,Cor.~5.10]{ALS2}\label{wide} The assignment $[\mathbf{u}, \mathbf{t}]\mapsto\N_\mathfrak{t}\cap\N_\mathfrak{u}$ defines a bijection between the wide intervals in $\tors{A}$ and the closed rigid sets in $\ZgInt{A}$. The inverse map assigns to a closed rigid set $\M$  the wide  interval  $[\mathbf{u}_\M,\mathbf{t}_\M]$ in $\tors{A}$, which is formed by all torsion classes whose associated maximal rigid set  contains  $\M$.
\end{theorem}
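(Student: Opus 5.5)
The plan is to check that the two assignments are well defined and mutually inverse, using the lattice isomorphism of Corollary~\ref{fundbij} and the characterisation of wide intervals by mutation in Theorem~\ref{mut}.

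\emph{Well-definedness.} Let $[\mathbf u,\mathbf t]$ be a wide interval. By Theorem~\ref{mut}, $\N_\mathfrak{u}$ and $\N_\mathfrak{t}$ are related by mutation, and we set $\E=\N_\mathfrak{t}\cap\N_\mathfrak{u}$.
\begin{itemize}
\item $\E$ is rigid, being contained in the rigid set $\N_\mathfrak{t}$.
\item $\E$ is closed. Under the homeomorphism of Proposition~\ref{propheart}, $\H{\mathfrak u}(\E)$ is the set of indecomposable injectives lying in the torsionfree class $\R$ of the hereditary torsion pair $\mathfrak s$ of finite type in $\heart{\mathfrak u}$. Such a set is closed in $\Sp{\heart{\mathfrak u}}$ by the correspondence between closed sets and hereditary torsion pairs of finite type. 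Closed subsets of the closed set $\N_\mathfrak{u}$ are closed in $\Zg{\Der{A}}$.
\end{itemize}
Conversely, for a closed rigid set $\M$ the discussion preceding the theorem yields the wide interval $[\mathbf u_\M,\mathbf t_\M]$. So the main work is to prove the final claim: a torsion class $\mathbf s$ lies in $[\mathbf u_\M,\mathbf t_\M]$ if and only if $\M\subseteq\N_{\mathfrak s}$.

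\emph{The key claim.} Let $\mathfrak s=\Tpair{S}{W}$ be the corresponding torsion pair of finite type.
\begin{itemize}
\item If $\M\subseteq\N_{\mathfrak s}$, then rigidity gives $\Hom{\Der{A}}(\mu,\sigma_{\mathfrak s}[1])=0$ and $\Hom{\Der{A}}(\sigma_{\mathfrak s},\mu[1])=0$ for all $\mu\in\M$.
\item By Lemma~\ref{csigma}, the first vanishing gives $\H{}(\mu)\in\C_{\sigma_{\mathfrak s}}=\mathcal W$. Hence $\Cogen{\H{}(\M)}\subseteq\mathcal W$, that is, $\mathfrak s\le\mathfrak t_\M$.
\item The second vanishing gives $\H{}(\sigma_{\mathfrak s})\in\bigcap_{\mu\in\M}\C_\mu=\V$. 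Since $\mathcal W=\Cogen{\H{}(\sigma_{\mathfrak s})}$, we get $\mathcal W\subseteq\V$, that is, $\mathfrak u_\M\le\mathfrak s$.
\item Conversely, suppose $\mathfrak u_\M\le\mathfrak s\le\mathfrak t_\M$. I plan to show that $\M\subseteq{}^{\perp_{>0}}\N_{\mathfrak s}$ and $\N_{\mathfrak s}\subseteq{}^{\perp_{>0}}\M$, so that $\M\cup\N_{\mathfrak s}$ is rigid. Maximality of $\N_{\mathfrak s}$ (Proposition~\ref{propcosilt}(4)) then forces $\M\subseteq\N_{\mathfrak s}$.
\item For the 2-term complexes involved, vanishing of $\Hom{}(-,-[1])$ reduces via Lemma~\ref{csigma} to membership of zero cohomologies in the classes $\C$. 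The reversed inequalities above supply exactly these memberships. The shifted injectives $I[-1]$ need a separate small check.
\end{itemize}

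\emph{Injectivity.} With the claim in hand, take a wide interval $[\mathbf u,\mathbf t]$ and put $\E=\N_\mathfrak{t}\cap\N_\mathfrak{u}$.
\begin{itemize}
\item Both $\N_\mathfrak{u}$ and $\N_\mathfrak{t}$ contain $\E$, so $[\mathbf u,\mathbf t]\subseteq[\mathbf u_\E,\mathbf t_\E]$.
\item For equality, I would identify $\mathfrak u_\E$ and $\mathfrak t_\E$ directly. The torsionfree class of $\mathfrak t_\E$ is $\Cogen{\H{}(\E)}$, and I would compare it with $\F=\Cogen{\H{}(\N_\mathfrak{t})}$.
\item The points of $\N_\mathfrak{t}\setminus\E$ are the special points, namely the $\N_\mathfrak{t}$-injective envelopes of the simples $B[-1]$, $B\in\B$. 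I expect $\H{}$ of these to be cogenerated by $\H{}(\E)$: by Proposition~\ref{las}(2),(3), a special module $N$ embeds in $\overline N$, and $\overline N$ should lie in $\Prod{\H{}(\E)}$, while special injectives contribute zero cohomology.
\item Dually, I would show $\V=\bigcap_{\mu\in\E}\C_\mu$. This means proving that the conditions coming from the critical points of $\N_\mathfrak{u}$ are implied by those coming from $\E$, again using the exact sequences in Proposition~\ref{las}(1).
\end{itemize}

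\emph{Surjectivity.} Given a closed rigid set $\M$, let $\N_\mathfrak{u}$ and $\N_\mathfrak{t}$ be the maximal rigid sets attached to $\mathbf u_\M$ and $\mathbf t_\M$. By the claim, $\M\subseteq\N_\mathfrak{t}\cap\N_\mathfrak{u}$. For the reverse inclusion, take $\nu\in\N_\mathfrak{t}\cap\N_\mathfrak{u}$. I would argue that $\M\cup\{\nu\}$ is rigid and that $\N_{\mathfrak s}\cap\N_\mathfrak{u}\cap\N_\mathfrak{t}$ has the same completions as $\M$. The cleaner route is to use the homeomorphism of Proposition~\ref{propheart} in $\heart{\mathfrak u}$:
\begin{itemize}
\item the hereditary torsion pair $\mathfrak s$ attached to the interval is cogenerated by $\H{\mathfrak u}(\N_\mathfrak{t}\cap\N_\mathfrak{u})$;
\item it is also cogenerated by the closed set $\H{\mathfrak u}(\M)$, because $\mathcal S=\T\cap\V$ is determined by $\M$;
\item closed sets correspond bijectively to hereditary torsion pairs of finite type, so the two closed sets coincide.
\end{itemize}
This step, verifying that $\H{\mathfrak u}(\M)$ is closed in $\Sp{\heart{\mathfrak u}}$ and cogenerates exactly $\mathfrak s$, is the main obstacle. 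I would attack it by showing that ${}^{\perp_0}\H{\mathfrak u}(\M)\cap\fp{\heart{\mathfrak u}}=\mathbf t_\M\cap\mathbf v_\M$ using Lemma~\ref{csigma}.
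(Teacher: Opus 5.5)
Your key claim is correct as you argue it. Lemma~\ref{csigma}, rigidity and the maximality in Proposition~\ref{propcosilt}(4) show that $\mathbf s\in[\mathbf u_\M,\mathbf t_\M]$ if and only if $\M\subseteq\N_{\mathfrak s}$. This is the paper's ``one verifies'' step, and it uses only that $\M$ is rigid. Your surjectivity route through the homeomorphism $\N_{\mathfrak u}\cong\Sp{\heart{\mathfrak u}}$ is also sound.

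\textbf{The gap is in the injectivity step.} To get $\H{}(\N_{\mathfrak t}\setminus\E)\subseteq\Cogen{\H{}(\E)}$, you use the sequence $0\to N\to\overline N\to T\to 0$ of Proposition~\ref{las}(2) and assert $\overline N\in\Prod{\H{}(\E)}$. This is false in general. In Example~\ref{kron2} we have $\E=\{\mu_G\}$, and the special points of $\N_{\mathfrak t}$ are the $\mu_{S[-\infty]}$. The left almost split map is $S[-\infty]\hookrightarrow S[-\infty]$, so $\overline N$ is again an adic module. But the only indecomposable module in $\Prod{G}$ is $G$, since $G$ is endofinite. The dual claim for the critical points of $\N_{\mathfrak u}$ is not argued at all. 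So, as written, you have not shown $[\mathbf u_\E,\mathbf t_\E]\subseteq[\mathbf u,\mathbf t]$. (Separately, closedness of $\E$ is immediate, since it is an intersection of two closed sets.)

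\textbf{How to close it.} The missing inclusion follows by working in $\heart{\mathfrak u}$, as you already do for surjectivity. Write $\mathfrak t_\E=(\T_\E,\F_\E)$ and $\mathfrak u_\E=(\U_\E,\V_\E)$.

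Since $\N_{\mathfrak u}$ and $\N_{\mathfrak t}$ are related by mutation, $\mathbb T_{\mathfrak t}$ is the right HRS-tilt of $\mathbb T_{\mathfrak u}$ both at $\mathfrak s$ and at the torsion pair cogenerated by $\H{\mathfrak u}(\E)$. The tilting torsion class is recovered as the intersection of $\heart{\mathfrak u}$ with the new aisle, so $\T\cap\V={}^{\perp_0}\H{\mathfrak u}(\E)$.

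Now use the torsion pair $(\V,\U[-1])$ in $\heart{\mathfrak u}$, together with the isomorphism $\Hom{\Der{A}}(X,\mu)\cong\Hom{\heart{\mathfrak u}}(X,\H{\mathfrak u}(\mu))$ for $X\in\heart{\mathfrak u}$ and $\mu\in\N_{\mathfrak u}$.
\begin{enumerate}
\item Let $M\in\T_\E$ have $\mathfrak u$-torsionfree quotient $V'$. Then $V'\in\T_\E\cap\V$, and $\Hom{\Der{A}}(V',\mu)=\Hom{A}(V',\H{}(\mu))=0$ for all $\mu\in\E$. Hence $V'\in{}^{\perp_0}\H{\mathfrak u}(\E)=\T\cap\V$, and $M\in\T$ because $\U\subseteq\T$.
\item Let $M\in\V_\E$ have $\mathfrak u$-torsion submodule $U'$. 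Then $U'\in\V_\E$, so by the definition of $\C_\mu$ we get $\Hom{\Der{A}}(U'[-1],\mu)=0$ for all $\mu\in\E$. Thus $U'[-1]\in{}^{\perp_0}\H{\mathfrak u}(\E)\subseteq\V$, whence $U'[-1]\in\V\cap\U[-1]=0$.
\end{enumerate}
This gives $\T_\E\subseteq\T$ and $\V_\E\subseteq\V$. Combined with your key claim, $[\mathbf u_\E,\mathbf t_\E]=[\mathbf u,\mathbf t]$.

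\textbf{The surjectivity obstacle.} A similar torsion-sequence argument in $\heart{\mathfrak u_\M}$, using $\M\subseteq\N_{\mathfrak u_\M}$ from your key claim, shows that ${}^{\perp_0}\H{\mathfrak u_\M}(\M)$ equals the torsion class $\mathcal S$ of $\mathfrak s$ on the whole heart, not just on finitely presented objects. This disposes of what you call the main obstacle. Closedness of $\H{\mathfrak u_\M}(\M)$ is automatic, since $\M$ is closed in $\N_{\mathfrak u_\M}$. The argument also proves directly that $\mathfrak s$ is hereditary, which is the fact you imported from the discussion before the theorem.
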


Let $\ClRigid{A}$ be the collection  of all closed rigid sets in $\ZgInt{A}$. We say that $\M$ in $\ClRigid{A}$ is \textbf{almost complete} if $\M$ is not maximal, but every  set in $\ClRigid{A}$ properly containing $\M$ is maximal. This generalises the notion of an  almost complete silting subcategory from \cite{AiharaIyama}.

\begin{corollary}\cite[Thm.~6.2]{ALS2}. The following statements are equivalent for  $\M$ in $\ClRigid{A}$.
\begin{enumerate}
\item $\M$ is almost complete.
\item $\M$ can be completed in exactly two ways to a maximal rigid set $\N$.
\item There is an arrow  $\mathbf{t}_\M\to \mathbf{u}_\M$ in the Hasse quiver of $\tors{A}$. 
\end{enumerate}
\end{corollary}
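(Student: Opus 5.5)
The whole argument takes place inside $\ClRigid{A}$ and uses Theorem~\ref{wide} as a dictionary. Fix $\M\in\ClRigid{A}$. The maximal rigid sets containing $\M$ are exactly those corresponding, under the lattice isomorphism of Corollary~\ref{fundbij}, to the torsion classes in the wide interval $[\mathbf u_\M,\mathbf t_\M]$. Two things also need checking. First, $\M$ is maximal precisely when this interval is a single point: if $\M$ is maximal it is its own unique completion, and conversely, a maximal rigid set is closed by Proposition~\ref{propcosilt}, so Theorem~\ref{wide} applies to it. Second, every closed rigid set $\M'\supseteq\M$ corresponds to a wide subinterval $[\mathbf u_{\M'},\mathbf t_{\M'}]\subseteq[\mathbf u_\M,\mathbf t_\M]$, because any completion of $\M'$ is also a completion of $\M$. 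This containment is strict when $\M'\ne\M$, by the injectivity in Theorem~\ref{wide}.

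For (1)$\Leftrightarrow$(2), suppose first that $\M$ is not maximal. Then the interval has at least two distinct torsion classes, namely $\mathbf u_\M\ne\mathbf t_\M$. For any completion $\N$ of $\M$, the set $\N\cap\N_{\mathfrak u_\M}$ is closed and rigid and contains $\M$. If $\M$ is almost complete, this intersection is either $\M$ or maximal, and in the latter case it equals both $\N$ and $\N_{\mathfrak u_\M}$. The same reasoning works with $\N_{\mathfrak t_\M}$ in place of $\N_{\mathfrak u_\M}$. So if there were a third completion $\N$, we would get $\N\cap\N_{\mathfrak u_\M}=\M=\N\cap\N_{\mathfrak t_\M}$.

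The obstacle is to rule this out. For that I plan to use that $[\mathbf u_\M,\mathbf t]$, for $\mathbf t$ the torsion class of $\N$, is itself an interval whose endpoints have associated sets meeting in exactly $\M$. I would apply Theorem~\ref{wide} to $\N_{\mathfrak u_\M}\cap\N$, noting that it is closed and rigid. The bijection then forces $[\mathbf u_\M,\mathbf t]$, if wide, to equal $[\mathbf u_\M,\mathbf t_\M]$, so that $\mathbf t=\mathbf t_\M$. To obtain wideness I would argue instead via sub-intervals. Take a torsion class $\mathbf t$ strictly between the endpoints of the interval, and choose an arrow $\mathbf t'\to\mathbf u_\M$ with $\mathbf t'\subseteq\mathbf t$; a finite-length argument in the wide subcategory $\mathbf t_\M\cap\mathbf v_\M=\filt\B$ supplies it, since one can take $\mathbf t'=\tilde{\mathbf T}(\mathbf u_\M\cup\{B\})$ for a simple $B\in\B\cap\mathbf t$. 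By Corollary~\ref{cover}, the sets $\N_{\mathfrak t'}$ and $\N_{\mathfrak u_\M}$ differ by an irreducible mutation. Hence $\N_{\mathfrak t'}\cap\N_{\mathfrak u_\M}$ is a closed rigid set containing $\M$ that is not maximal. By almost completeness it must equal $\M$, and then Theorem~\ref{wide} gives $[\mathbf u_\M,\mathbf t']=[\mathbf u_\M,\mathbf t_\M]$, so $\mathbf t'=\mathbf t_\M$. This contradicts $\mathbf t'\subseteq\mathbf t\subsetneq\mathbf t_\M$. Therefore the interval has exactly two elements, which gives (2), and in fact also (3).

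Conversely, assume (2). The interval is then $\{\mathbf u_\M,\mathbf t_\M\}$ with $\mathbf u_\M\ne\mathbf t_\M$, so $\mathbf t_\M$ covers $\mathbf u_\M$, which is (3), and $\M$ is not maximal. If $\M\subsetneq\M'$ is closed and rigid, then $[\mathbf u_{\M'},\mathbf t_{\M'}]$ is a proper subinterval of a two-element interval, so it is a single point and $\M'$ is maximal. Hence $\M$ is almost complete. Finally, (3)$\Rightarrow$(2) holds because a covering relation means the interval $[\mathbf u_\M,\mathbf t_\M]$ has exactly two elements. By Theorem~\ref{wide} these correspond to exactly two completions of $\M$, and Corollary~\ref{cover} confirms that they are related by irreducible mutation.
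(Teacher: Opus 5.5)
Your argument is correct and is essentially the intended one: Theorem~\ref{wide} identifies completions of $\M$ with the interval $[\mathbf u_\M,\mathbf t_\M]$, and closed rigid supersets of $\M$ with wide subintervals. Almost completeness then forces the interval to be a single arrow, because any intermediate torsion class $\mathbf t$ lies above a cover $\tilde{\mathbf T}(\mathbf u_\M\cup\{B\})$ of $\mathbf u_\M$ with $B\in\B\cap\mathbf t$. The step you only sketch does go through: a simple quotient in $\B$ of the $\mathbf v_\M$-part of any module in $\mathbf t\setminus\mathbf u_\M$ provides $B$, and the same argument combined with $\Hom{A}(\mathbf u_\M\cup\{B\},B')=0$ for $B'\in\B\setminus\{B\}$ gives minimality; your preliminary detour through $\N\cap\N_{\mathfrak u_\M}$ is not needed.
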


We have seen in Section~\ref{muta} that mutation consists in exchanging  $\N$-injective envelopes of finitely presented simple objects in hearts. 
Mutation  can thus only change neg-isolated points of $\N$, and in fact, only those  that are associated  to finitely presented simple objects. The next result aims at an intrinsic description of these points, using the Ziegler topology.
We will say that a point $\mu$ in a maximal rigid set $\N$ is \emph{mutable} if there exists a maximal rigid set 
$\N'$ that arises from $\N$ by removing $\mu$ and replacing it with a point $\nu$    related by mutation with $\mu$. 

\begin{corollary}\cite[Cor.~6.3]{ALS2} Let $\N$ be a maximal rigid set. A point  $\mu\in\N$ is mutable if and only if the set there exists a unique  maximal rigid set $\N'\not=\N$ containing  $\N \setminus \{\mu\}$. 
In this case, $\N \setminus \{\mu\}=\N\cap\N'$, and $\mu$ is an isolated point of $\N$ in the Ziegler subspace topology on $\N$.\end{corollary}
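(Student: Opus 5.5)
The plan is to reduce everything to Theorem~\ref{wide}, the preceding corollary on almost complete sets, and Corollary~\ref{cover}.

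\emph{Forward direction.} Suppose $\mu$ is mutable, so there is a maximal rigid set $\N'=(\N\setminus\{\mu\})\cup\{\nu\}$ related to $\N$ by mutation. Since $|\N\setminus\N'|=1$, the mutation is irreducible. By Corollary~\ref{cover}, the two corresponding torsion pairs are related by a covering, say $\N=\N_\mathfrak{u}$ and $\N'=\N_\mathfrak{t}$ (or with the roles swapped). Moreover $\N\cap\N'=\N\setminus\{\mu\}$, and this set is closed and rigid, being the set $\E$ attached to a wide interval.

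By Theorem~\ref{wide}, the maximal rigid sets containing $\M:=\N\setminus\{\mu\}$ correspond exactly to the torsion classes in the interval $[\mathbf u_\M,\mathbf t_\M]$. This interval equals $[\mathbf u,\mathbf t]$, since $\M=\N_\mathfrak{t}\cap\N_\mathfrak{u}$ and the assignment in Theorem~\ref{wide} is a bijection. Because $\mathbf t$ covers $\mathbf u$, the interval contains only these two elements. Hence $\N'$ is the unique maximal rigid set other than $\N$ containing $\N\setminus\{\mu\}$.

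Isolatedness follows quickly. Since $\N\setminus\{\mu\}=\N\cap\N'$ is an intersection of closed sets, it is closed in $\Zg{\Der{A}}$. Therefore $\{\mu\}$ is open in the subspace $\N$, so $\mu$ is isolated there.

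\emph{Converse.} Suppose $\N'\neq\N$ is the unique other maximal rigid set containing $\N\setminus\{\mu\}$. The key step is to show that $\M:=\N\setminus\{\mu\}$ is closed. I expect this to be the main obstacle, since a priori we only know $\N$ is closed and have no topological information about $\mu$.

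I would first try to avoid the issue by replacing $\M$ with $\M_0:=\N\cap\N'$. This set is closed (as an intersection of closed sets) and rigid (as a subset of $\N$), and it contains $\M$; it is a proper subset of $\N$ because $\N'\neq\N$ and both are maximal. So $\M_0=\M$. Now Theorem~\ref{wide} applies to $\M$. Its interval $[\mathbf u_\M,\mathbf t_\M]$ corresponds to the maximal rigid sets containing $\M$, which are exactly $\N$ and $\N'$ by hypothesis. So the interval has two elements, which means a cover, i.e.\ an arrow in the Hasse quiver. Equivalently, $\M$ is almost complete by the preceding corollary.

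Corollary~\ref{cover} then says that $\N$ and $\N'$ are related by irreducible mutation. By the description of mutation, $\N\setminus\N'=\{\mu\}$ is exchanged for the single element of $\N'\setminus\N$. Hence $\mu$ is mutable, and the remaining assertions follow as in the forward direction.
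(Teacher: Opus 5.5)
Your proof is correct and is the natural derivation the paper has in mind; the statement is quoted without proof, and both directions reduce, exactly as you do, to Theorem~\ref{wide} and Corollary~\ref{cover} (equivalently, the characterisation of almost complete sets), with your observation $\N\setminus\{\mu\}=\N\cap\N'$ supplying closedness in the converse. The only tacit step is that two maximal rigid sets related by mutation are comparable, which is needed for Corollary~\ref{cover} to apply in the forward direction; it follows from the exchange triangle, since $\Hom{\Der{A}}(\epsilon_\lambda,\lambda'[1])=0$ by rigidity and $\Hom{\Der{A}}(\lambda,\lambda'[2])=0$ for two-term complexes.
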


\begin{remark}\cite[\S 6.1 and 6.2]{ALS2} {\rm For many finite dimensional algebras also the converse statement holds true, that is,  the mutable points in $\N$ are precisely the isolated ones. This holds for example when $A$ has   finite representation type, or is a tame hereditary algebra, or a domestic string algebra. More cases are listed in \cite[Rem.~6.17]{ALS2}.}\end{remark}

\begin{example} \label{kron3}
In Example~\ref{kron2}, the closed rigid set associated to the wide interval $[\mathbf u,\mathbf t]$ is the set $\{G\}$, and  $G$ is contained in all maximal rigid sets associated to  torsion pairs $\tpair{t'}{f'}$ with $\mathbf u\subseteq\mathbf{t'}\subseteq\mathbf t$.
We will see in Section~\ref{semistable} that this  is an instance of a much more general phenomenon.
\end{example}
 
\section{Applications} 
Throughout this section $A$ denotes a finite dimensional algebra. 
\subsection{Bricks and grains.}\label{grains}
It was shown in \cite{DemonetIyamaJasso:19} that there is a bijection between left finite bricks and  $\tau$-rigid modules, or dually, right finite bricks and $\tau^{-}$-rigid modules. Here a brick $B$ is said to be \textbf{right finite} if   the torsion pair
$\tpair{u_{\rm B}}{v_{\rm B}}=({}^{\perp_0}B\cap\mod{A},\tilde{F}(B))$    cogenerated by $B$ in $\mod{A}$ is functorially finite. 

In order to relax this finiteness assumption, we need to generalise the notion of a $\tau^{-}$-rigid module. We say that  an indecomposable pure-injective module $N$ is a \textbf{grain} if its  minimal injective copresentation $\mu_N$ is rigid, i.e. $\{\mu_N\}$ is a rigid set in $\Der{A}$, or equivalently, $N$ is the zero cohomology of a point in some maximal rigid set. Let us establish a relation between grains and bricks.
 
 Given a brick $B$, we know from \cite{BarnardCarrollZhu:19, DemonetIyamaReadingReitenThomas:23} that 
  the torsion class $\mathbf{u_{\rm B}}$  in the torsion pair $\tpair{u_{\rm B}}{v_{\rm B}}$  cogenerated by  $B$ is \textbf{completely meet irreducible}, i.e.~it cannot be written as an intersection of a family of strictly bigger torsion classes. Moreover,  every completely meet irreducible torsion class is of this form.  Notice that $B$ is the unique finite dimensional brick  which is  torsionfree almost torsion with respect to the corresponding torsion pair $\Tpair{U_{\rm B}}{V_{\rm B}}$ in $\Tors{A}$, and by Proposition~\ref{las} there is an  associated critical module $N$ in $\V_B$. By definition, $N$ is a grain. Furthermore, one checks that the torsion pair $(\t_N,\f_N)$ in $\mod{A}$ with torsion  class $\t_N:=\{X\in\mod{A}\mid \Hom{A}(X,N)=0\}$ agrees with $\tpair{u_{\rm B}}{v_{\rm B}}$, so $\t_N$ is completely meet irreducible.

We thus obtain a map from $\brick{A}$  to the class $\Ri(A)$  formed by the  \srigid s $N$ such that the  torsion class $\t_N$ is completely meet irreducible. It defines a bijection with inverse map taking a \srigid\  $N$ to the unique brick $B_N$ which cogenerates the torsion pair $(\t_N,\f_N)$.

 Observe that a finite dimensional module $N$ lies in $\Ri(A)$ if and only if it is $\tau^{-}$-rigid.
Moreover, in this case the brick $B_N$ is isomorphic to the socle of $N$ over its endomorphism ring $\End{A}(N)$. The assignment $N\mapsto B_N$ is thus dual to the map considered in \cite[Thm.~4.1]{DemonetIyamaJasso:19}.

\begin{theorem}\cite[Thm.~B]{ALS1}\label{B}
There is a bijection 
	$\brick{A}\longrightarrow \Ri(A)$ which restricts to a bijection between right finite bricks and $\tau^{-}$-rigid modules. 
\end{theorem}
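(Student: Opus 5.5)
The plan is to build the map $\Phi\colon\brick{A}\to\Ri(A)$ explicitly, as sketched before the statement, and then show that it is bijective and compatible with the finiteness conditions. Given a brick $B$, I take the cogenerated torsion pair $\tpair{u_{\rm B}}{v_{\rm B}}$ and its lift $\Tpair{U_B}{V_B}$ in $\Tors{A}$ (Theorem~\ref{finite type}). By \cite{BarnardCarrollZhu:19,DemonetIyamaReadingReitenThomas:23}, $\mathbf u_B$ is completely meet irreducible, and $B$ is the unique finite dimensional torsionfree almost torsion module. Proposition~\ref{las}(1) applied to the $\V_B^{\perp_1}$-envelope of $B$ then gives a critical module $N$ in $\V_B$. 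By the neg-isolated point discussion, $\mu_N\in\N_{\mathfrak u_B}$, so $N$ is a grain, and I set $\Phi(B)=N$. The key claim is $\t_N=\mathbf u_B$. For $X\in\mathbf u_B$ we have $\Hom{A}(X,N)=0$ because $N\in\V_B$. Conversely, if $X\in\mod{A}$ satisfies $\Hom{A}(X,N)=0$, the image of $X$ in $\mathbf v_B$ is a nonzero finite dimensional torsionfree module, which maps nontrivially to a $\V_B^{\perp_1}$-envelope built from $B$; I expect to use here that $\mathbf v_B=\tilde{\mathbf F}(B)$ and that $\Hom{A}(B,N)\neq 0$ with the envelope being injective. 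This shows $\Phi(B)\in\Ri(A)$.

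For the inverse, given $N\in\Ri(A)$, I note that $\t_N$ is completely meet irreducible, so by \cite{BarnardCarrollZhu:19,DemonetIyamaReadingReitenThomas:23} it equals $\mathbf u_{B_N}$ for a unique brick $B_N$. Then $\Psi(N)=B_N$, and $\Psi\Phi=\mathrm{id}$ follows from $\t_{\Phi(B)}=\mathbf u_B$ together with uniqueness. The main obstacle is $\Phi\Psi=\mathrm{id}$, i.e.\ showing that $N$ is the critical module attached to $B_N$. First I would show $\mu_N\in\N_{\mathfrak u}$ for $\mathfrak u$ the lift of $(\t_N,\f_N)$: $N\in\V$ and $\mu_N$ is rigid, so by maximality and Theorem~\ref{wide} the rigid set $\{\mu_N\}\cup\N_{\mathfrak u}$ should be contained in a maximal rigid set giving the same torsion pair. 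Second, $\N_{\mathfrak u}$-injective envelopes are unique, and the points $\nu\in\N_{\mathfrak u}$ with $\Hom{}(B_N,\nu)\ne0$ reduce to one, by Proposition~\ref{propheart} and simplicity of $B_N$ in $\heart{\mathfrak u}$. Since $\Hom{A}(B_N,N)\neq0$ (otherwise $B_N\in\t_N$, contradicting $B_N\in\mathbf v_B$), I get $\mu_N$ equal to the $\N_{\mathfrak u}$-injective envelope of $B_N$, which is $\mu_{\Phi(B_N)}$. Hence $N\cong\Phi(B_N)$.

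For the restriction, I use Remark~\ref{ff}(2). If $B$ is right finite, $\N_{\mathfrak u_B}$ consists of finitely many complexes of finite dimensional injectives, so $N$ is finite dimensional with $\mu_N$ rigid, i.e.\ $\tau^-$-rigid. Conversely, if $N$ is finite dimensional $\tau^-$-rigid, then $\t_N={}^{\perp_0}N$ is functorially finite by the dual of \cite{AdachiIyamaReiten:14}, so $B_N$ is right finite. This matches the map in \cite[Thm.~4.1]{DemonetIyamaJasso:19}.
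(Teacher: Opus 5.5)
Your construction of $\Phi$ and $\Psi$, the identity $\t_N=\mathbf u_B$, and the proof of $\Phi\Psi=\mathrm{id}$ via uniqueness of the $\N_{\mathfrak u}$-injective envelope of the simple object $B_N\in\heart{\mathfrak u}$ follow the paper's sketch and are essentially sound.

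There is, however, a genuine gap in the restriction to right finite bricks. For the restricted map to be onto the $\tau^{-}$-rigid modules, you must know that every indecomposable $\tau^{-}$-rigid module $N$ lies in $\Ri(A)$. That is, $\t_N={}^{\perp_0}N\cap\mod{A}$ must be completely meet irreducible. Your ``conversely'' step uses $B_N$ before this has been established. Functorial finiteness of $\t_N$ does not supply it, since a functorially finite torsion class can have several upper covers.

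The paper relies here on the duality with \cite[Thm.~4.1]{DemonetIyamaJasso:19}. If $B$ is the socle of $N$ over $\End{A}(N)$, then $\tilde{\mathbf F}(N)=\tilde{\mathbf F}(B)$. Hence $\t_N={}^{\perp_0}B\cap\mod{A}=\mathbf u_B$ is completely meet irreducible and $B_N\cong B$. You need to add this, or an equivalent input from $\tau$-tilting theory.

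Two further steps need an explicit argument.

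First, your justification of $\mu_N\in\N_{\mathfrak u}$ covers only half of the rigidity of $\N_{\mathfrak u}\cup\{\mu_N\}$. By Lemma~\ref{csigma}, $N\in\V$ gives $\Hom{\Der{A}}(\mu_N,\nu[1])=0$ for $\nu\in\N_{\mathfrak u}$. You also need $\Hom{\Der{A}}(\nu,\mu_N[1])=0$, i.e.\ $\mathrm{H}^0(\nu)\in\C_{\mu_N}$, so it suffices to show $\V\subseteq\C_{\mu_N}$. This holds for the following reason:
\begin{itemize}
\item $\C_{\mu_N}$ is a torsionfree class of finite type in $\Mod{A}$, as in the argument after Theorem~\ref{bijcpxmaxrigid}.
\item It contains $N$ because $\mu_N$ is rigid, so ${}^{\perp_0}\C_{\mu_N}\cap\mod{A}\subseteq\t_N$.
\item Therefore $\f_N\subseteq\C_{\mu_N}$, and so $\V=\varinjlim\f_N\subseteq\C_{\mu_N}$.
\end{itemize}
Maximality of $\N_{\mathfrak u}$ then gives $\mu_N\in\N_{\mathfrak u}$ directly, with no need for Theorem~\ref{wide}.

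Second, in proving $\t_N\subseteq\mathbf u_B$, state explicitly that every nonzero module in $\tilde{\mathbf F}(B)$ has a nonzero map to $B$. To see this, take the top factor of a filtration by submodules of finite direct sums of $B$. This map stays nonzero after composing with the monomorphism $B\hookrightarrow N$.
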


\subsection{Brick-finite algebras.}\label{brickfin}
Recall that the algebra $A$ is \textbf{brick-finite} if $\brick{A}$ is a finite set. It was proven by Demonet, Iyama and Jasso in \cite{DemonetIyamaJasso:19} that $A$ is brick-finite if and only if all torsion classes are functorially finite.   Sentieri then showed in \cite{Sentieri:22} that any  brick-infinite algebra admits a torsion pair $\tpair{u}{v}$ with a maximal non functorially finite torsion class $\mathbf{u}$, and such torsion pair is  \textbf{locally maximal}, i.e. there are no
torsion pairs $\tpair{t}{f}$ in $\tors{A}$ covering  $\tpair{u}{v}$. By Corollary~\ref{cover} this means that there is no $B \in\mod{A}$ which is torsionfree almost torsion with respect to $\tpair{u}{v}$, or in other words, there is no finitely presented simple torsion object in the associated heart. On the other hand, we have seen in \S~\ref{hearts} that the heart always admits simple torsion objects. Thus there must be a torsionfree almost torsion  module $B$ which is  infinite dimensional. This is analogous to Auslander's classical result proving the existence of large indecomposable modules over representation-infinite algebras.
\begin{theorem}\cite[Thm.~3.14]{Sentieri:22}\cite[Cor.~C]{ALS1} The following statements are equivalent.
\begin{enumerate}
\item  $A$ is brick finite.
\item All bricks in $\Mod{A}$ are finite dimensional.
\item The torsion class $\t_N$ is completely meet irreducible in $\tors{A}$ for every grain $N$.
\end{enumerate}
\end{theorem}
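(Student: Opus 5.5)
I would prove the cycle of implications (1)$\Rightarrow$(3)$\Rightarrow$(2)$\Rightarrow$(1), using the bijection of Theorem~\ref{B} together with the Demonet--Iyama--Jasso criterion and Sentieri's locally maximal torsion pairs.

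\emph{(1)$\Rightarrow$(3).} Let $A$ be brick-finite. By \cite{DemonetIyamaJasso:19} every torsion class in $\mod{A}$ is functorially finite, and $\tors{A}$ is finite. Let $N$ be a grain, so $\mu_N$ lies in some maximal rigid set $\N_{\mathfrak t}$. Since $\tpair{t}{f}$ is functorially finite, Remark~\ref{ff}(2) shows that $\N_{\mathfrak t}$ consists of finitely many complexes of finite dimensional injectives. Hence $N$ is finite dimensional and $\tau^{-}$-rigid. By the observation preceding Theorem~\ref{B}, a finite dimensional module lies in $\Ri(A)$ if and only if it is $\tau^-$-rigid. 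So $\t_N$ is completely meet irreducible.

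\emph{(3)$\Rightarrow$(2).} Suppose $B\in\Mod{A}$ is an infinite dimensional brick. I want a grain $N$ with $\t_N$ not completely meet irreducible. The natural attempt is to realise $B$ as a simple object in some heart. Let $\mathfrak u$ be the torsion pair of finite type cogenerated by $B$. If $B$ is torsionfree almost torsion with respect to $\mathfrak u$, Proposition~\ref{las}(1) gives a critical module $N$ with $B\hookrightarrow N$ an $\F^{\perp_1}$-envelope. This $N$ is a grain. If $\t_N$ were completely meet irreducible, Theorem~\ref{B} would give a finite dimensional brick $B_N$ cogenerating $(\t_N,\f_N)$, and $B_N$ would be the unique finite dimensional torsionfree almost torsion module for that pair. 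I would then argue that $B$ is also simple in the same heart with the same injective envelope $\mu_N$. Uniqueness of the simple with a given injective envelope in $\heart{\mathfrak u}$ would force $B\cong B_N$, a contradiction.

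This step is the main obstacle, because an arbitrary infinite dimensional brick need not be torsionfree almost torsion for the pair it cogenerates. If it is not, I would instead prove the contrapositive $\neg(1)\Rightarrow\neg(3)$ directly, which avoids handling arbitrary bricks. Take Sentieri's locally maximal torsion pair $\tpair{u}{v}$ for a brick-infinite $A$. As explained before the statement, there is an infinite dimensional torsionfree almost torsion module $B$ whose $\N_{\mathfrak u}$-injective envelope is a critical point $\mu_N$. Then $N$ is a grain, and the uniqueness-of-envelope argument above shows $\t_N$ is not completely meet irreducible. This gives (3)$\Rightarrow$(1), and the same construction gives (2)$\Rightarrow$(1), since $B$ is itself an infinite dimensional brick.

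\emph{(1)$\Rightarrow$(2).} Let $B$ be any brick in $\Mod{A}$. I would use the simple-object description. Pick the torsion pair of finite type $\mathfrak u$ cogenerated by $B$ and show that $B$ is torsionfree almost torsion for a suitable finite type pair. By Proposition~\ref{simples in the heart}, $B$ is then simple in a heart. When $A$ is brick-finite, all torsion pairs are functorially finite, so every simple torsion object comes from a finite dimensional module. The statement of Proposition~\ref{las} and Remark~\ref{ff}(2) show that critical modules are finite dimensional, and Proposition~\ref{las}(1) places $B$ inside such a module, so $B$ is finite dimensional. The delicate point here is the same reduction of an arbitrary brick to a simple in a heart, which I expect to require the results of \cite{Sentieri:22}.
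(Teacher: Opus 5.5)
Your (1)$\Rightarrow$(3), via Remark~\ref{ff}(2) and the fact that a finite dimensional grain lies in $\Ri(A)$ exactly when it is $\tau^-$-rigid, is fine. So is your (2)$\Rightarrow$(1), which is exactly the paper's argument with Sentieri's locally maximal pair.

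\textbf{The gap in (3)$\Rightarrow$(1).} Here $B$ is simple in $\heart{\mathfrak u}$ with envelope $\mu_N$. But if $\t_N$ is completely meet irreducible, $B_N$ is simple in the heart of the finite type pair attached to $\t_N$, and not a priori in $\heart{\mathfrak u}$. Uniqueness of the simple below an indecomposable injective only compares simples inside one heart. The only automatic relation is $\mathbf u\subseteq\t_N$, because $N\in\V$. In general this inclusion is strict: for $\tpair{u}{v}=\tpair{\add{\mathbf q}}{\add{\mathbf p\cup\mathbf r}}$ from Example~\ref{kron} and the critical Pr\"ufer module $N=S[\infty]$, $\t_N$ also contains the regular modules of the other tubes. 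To place $B_N$ in $\heart{\mathfrak u}$ you would need its proper quotients to lie in $\mathbf u$. To place $B$ in the other heart you would need condition (ii) for the larger pair. Nothing in your argument gives either.

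The missing idea is to use the stronger property of Sentieri's pair: $\mathbf u$ is maximal among non-functorially finite torsion classes. Suppose $\t_N$ were completely meet irreducible.
\begin{enumerate}
\item By Theorem~\ref{B}, $N$ is the critical module attached to $B_N$, so $\mu_N$ lies in the maximal rigid set of $\t_N$.
\item Since $N\supseteq B$ is infinite dimensional, Remark~\ref{ff}(2) shows that $\t_N$ is not functorially finite.
\item Maximality of $\mathbf u$ then forces $\t_N=\mathbf u$.
\item Hence $B_N\in\mod{A}$ is torsionfree almost torsion with respect to $\mathfrak u$, contradicting local maximality by Corollary~\ref{cover}. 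Equivalently, $\mathbf u$ would have an upper cover.
\end{enumerate}
Only after this identification does your envelope comparison become valid, since then $B$ and $B_N$ are both simple in $\heart{\mathfrak u}$ with envelope $\mu_N$. At that point it is no longer needed.

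\textbf{The open step in (1)$\Rightarrow$(2).} You leave this open at the step you flag; the paper itself only cites \cite{Sentieri:22} here. Your worry is misplaced, and the step is elementary. Every brick $B\in\Mod{A}$ is torsionfree almost torsion with respect to $\Tpair{T}{F}=({}^{\perp_0}B,\mathbf F(B))$.
\begin{itemize}
\item Condition (i): a proper quotient of $B$ lies in $\T$, because $\End{A}(B)$ is a division ring.
\item Condition (ii): first take a non-split sequence $0\to B\to Y\to T\to 0$ with $Y\in\F$ and $T\in\T$. If some $g\colon Y\to B$ were nonzero on $B$, its restriction would be an automorphism of $B$ and the sequence would split. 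So every such $g$ factors through $T$ and vanishes, giving $\Hom{A}(Y,B)=0$ and hence $Y\in\T\cap\F=0$, a contradiction. Thus every such sequence splits. Now, given $0\to B\to F\to M\to 0$ with $F\in\F$, pull back along the torsion part $tM\subseteq M$. The resulting sequence splits, so $tM\in\F$ and therefore $tM=0$.
\end{itemize}
What can fail in general is finite type, and this is where brick-finiteness enters. The class $\mathbf t=\T\cap\mod{A}$ is functorially finite, so $\varinjlim\mathbf f=\Cogen{C}$ for a finite dimensional cosilting module $C$. Since $C\in\mathbf f\subseteq\F$, we get $\Cogen{C}\subseteq\F$. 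Conversely, $\F\subseteq\varinjlim\mathbf f$, because the torsion class $\T$ is closed under direct limits and therefore contains $\varinjlim\mathbf t$. So $\F$ is of finite type, and your use of Proposition~\ref{las}(1) and Remark~\ref{ff}(2) then finishes the argument.
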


\subsection{Semistable torsion pairs.} \label{semistable}

Let  $\mathrm{inj}(A)$ denote the category of finitely generated injective $A$-modules and ${\rm K}_0(\mathrm{inj}(A))$ its Grothendieck group. We fix a g-vector  $\Theta\in{\rm K}_0(\mathrm{inj}(A))$ and regard it as a linear form  on the real Grothendieck group ${\rm K}_0(\mod{A})\otimes\mathbb R$. Consider the  torsion pairs $(\overline{\t_\Theta}, \f_\Theta)$ and $(\t_\Theta, \overline{\f_\Theta})$ in $\tors{A}$ given by

$\overline{\t_\Theta}:=\{M\in\mod{A}\mid \Theta(M')\le 0 \text{ for all nonzero factor modules } M' \text{ of } M\}$,

$\overline{\f_\Theta}:=\{M\in\mod{A}\mid \Theta(M')\ge 0 \text{ for all nonzero submodules } M' \text{ of } M\}$,

and $\t_\Theta$ and $\f_\Theta$ defined correspondingly by strict inequalities.
Notice that $[\t_\Theta,\overline{\t_\Theta}]$ is a wide interval, as $\overline{\t_\Theta}\cap\overline{\f_\Theta}$ is the category of $\Theta$-semistable modules in the sense of \cite{King}. We want to determine the associated closed rigid set $\M_\Theta$ under the bijection in Theorem~\ref{wide}.

 We can write $\Theta=[I_0]-[I_{1}]$, where $I_0,I_{1}\in\mathrm{inj}(A)$ have no common non-zero summands and are unique up to isomorphism, and consider the irreducible  variety 
$\Hom{}(\Theta):=\Hom{A}(I_0,I_1)$. 
By \cite{DF} we can assume without loss of generality that $\Theta$ is generically indecomposable, i.e.~all elements in a suitable dense open set of $\Hom{}(\Theta)$ are indecomposable. Moreover, if we assume that $A$ is of tame representation type, then it follows from \cite{PY,GLS} that there are two cases: either
(i) $\Theta$ is rigid,  or
(ii) there is a dense open subset  $U$ of $\Hom{}(\Theta)$ such that the kernels of the morphisms in $U$ form a one-parameter family of bricks.
We refer to \cite{DF,PY,GLS} for details.

\begin{example} \label{kron4} In Examples~\ref{kron2} and~\ref{kron3} we have $[\mathbf{u},\mathbf{t}]=[\t_\Theta,\overline{\t_\Theta}]$ for $\Theta=(1,-1)^T$, and the one-parameter family of bricks mentioned above is given by the simple regular modules.\end{example}

In case (i), we know from \cite{AsaiIyama} that there is a cosilting complex $\mu$ with finite dimensional injective terms  such that $\M_\Theta=\{\mu\}$. 
In case (ii),  we obtain a family of tubes $\Pi$ in the Auslander-Reiten quiver of $A$, and by  \cite{Krause} the closure of $\Pi$ in the Ziegler topology contains a generic module $G$. Combining the methods outlined  in these notes with the main  result of \cite{Bautista} we obtain 
 
\begin{theorem}\cite{ALP} Let $A$ be a tame algebra and let $\Theta$ be generically indecomposable and not rigid.  Then  $\M_\Theta=\{\mu_G\}$, where $\mu_G$ is the minimal injective copresentation of $G$. Moreover, $\overline{\t_\Theta}$ is  locally maximal  and $\t_\Theta$ is  locally minimal in $\tors{A}$. \end{theorem}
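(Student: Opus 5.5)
The plan is to identify $\M_\Theta=\N_{\mathfrak t}\cap\N_{\mathfrak u}$ for $\mathbf u=\t_\Theta$, $\mathbf t=\overline{\t_\Theta}$ in two steps: first show that $\mu_G$ lies in every maximal rigid set attached to the interval, then show that nothing else is common to the two ends. Throughout I use Theorem~\ref{wide} and Theorem~\ref{mut}.

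\emph{Step 1.} The wide subcategory $\mathbf t\cap\mathbf v=\overline{\t_\Theta}\cap\overline{\f_\Theta}$ is the category of $\Theta$-semistable modules. In case (ii) its simple objects (the semibrick $\B$) should be the one-parameter family of bricks given as kernels of generic maps in $\Hom{}(\Theta)$. These are the quasi-simples at the mouths of the tubes in $\Pi$, as in the Kronecker Example~\ref{kron4}. I would use the main result of \cite{Bautista} to show that the homogeneous tubes in $\Pi$ are $\Theta$-semistable, and that $\mathbf t\cap\mathbf v$ is exactly the additive closure of $\Pi$ together with the finitely many semistable bricks in non-homogeneous tubes.

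\emph{Step 2.} Next I show $\mu_G\in\N_{\mathfrak t}\cap\N_{\mathfrak u}$. For $\mathfrak t$, the torsionfree class $\F=\varinjlim\overline{\f_\Theta}$ should contain $G$, since $G$ is a direct limit of modules from $\Pi$, and $\Theta$ vanishes on the finite dimensional modules from $\Pi$. Then I would check $G\in\F^{\perp_1}$, in analogy with $\mathfrak t$ being cogenerated by $G$ in Example~\ref{kron2}. For this I use that $G$ is generic: it is endofinite, hence $\Sigma$-pure-injective, and $\Ext{A}^1(\F,G)$ can be tested on finite dimensional modules $X\in\overline{\f_\Theta}$ via the Auslander–Reiten formula against $\Pi$. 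For $\mathfrak u$, the module $G$ lies in the Ziegler closure of $\Pi\subseteq\mathbf v$, so $G\in\V$ because $\V$ is definable. Rigidity of $\mu_G$ then gives $G$ as a summand of the cosilting module, by maximality as in Proposition~\ref{propcosilt}(4).

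\emph{Step 3.} For the converse inclusion $\M_\Theta\subseteq\{\mu_G\}$, I use Theorem~\ref{mut}. All points of $\N_{\mathfrak u}$ not fixed by the mutation are injective envelopes of the simples $B\in\B$, namely the Prüfer modules of the tubes. It therefore suffices to show that every other point of $\N_{\mathfrak u}$ is such an envelope or is excluded, i.e. $\N_{\mathfrak u}=\{\mu_G\}\cup\{\text{critical points from }\B\}$.

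\emph{Expected obstacle.} This last step is the main difficulty, since the tame-type hypothesis must control all indecomposable pure-injectives in $\V\cap\V^{\perp_1}$. I would argue that any such indecomposable $N\neq G$ has $\Hom{A}(B,N)\neq0$ for some $B\in\B$, using that $\Theta$-semistable bricks generate together with $G$. Its image under $\H{\mathfrak u}$ is then an injective with nonzero socle from $\B$, hence it is the envelope of that simple. The same type of argument, applied to special injectives $I[-1]$, rules those out.

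\emph{Local extremality.} If $\mathbf t'$ covered $\mathbf t$, Corollary~\ref{cover} would give a finite dimensional torsion almost torsionfree... rather, a finite dimensional torsionfree almost torsion brick $B'$ for $\mathfrak t$. Its $\N_{\mathfrak t}$-envelope would then have to be mutable and isolated. Step 3 forces $\N_{\mathfrak t}=\{\mu_G\}\cup\{\text{adics}\}$, with $\mu_G$ the only critical point, and $\mu_G$ is not isolated since it lies in the closure of the adics. Hence $\overline{\t_\Theta}$ is locally maximal, and the dual argument for $\N_{\mathfrak u}$ shows that $\t_\Theta$ is locally minimal.
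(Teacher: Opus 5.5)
There are two genuine gaps. First, in Step~2 you have the torsionfree classes swapped. The torsion pair attached to $\mathbf t=\overline{\t_\Theta}$ is $(\overline{\t_\Theta},\f_\Theta)$, so $\F=\varinjlim\f_\Theta$, whereas $\varinjlim\overline{\f_\Theta}$ is $\V$. Your argument therefore only yields $G\in\V$ (fine, granting semistability of the tube modules and definability of $\V$). It never yields $G\in\F$, which is what $\mu_G\in\N_{\mathfrak t}$ requires. In fact the reasoning points the wrong way. A nonzero module with $\Theta(M)=0$ is never in $\f_\Theta$. The modules of the homogeneous tubes, being $\Theta$-semistable as you claim in Step~1, lie in the torsion class $\mathbf t$, and $\T=\varinjlim\mathbf t$ is closed under direct limits. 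So if $G$ were a direct limit of such modules it would be torsion for $\mathfrak t$ and could not lie in $\F$. In Example~\ref{kron2}, $G$ lies in the Ziegler closure of the tubes but is a direct limit of preprojectives, not of regular modules. What actually has to be shown is $\Hom{A}(\overline{\t_\Theta},G)=0$, i.e.\ $\Theta(M)>0$ for every nonzero finitely generated submodule $M\subseteq G$. One also needs rigidity of $\mu_G$ and the vanishing of $\Hom{\Der{A}}(\sigma_{\mathfrak t},\mu_G[1])$ and $\Hom{\Der{A}}(\sigma_{\mathfrak u},\mu_G[1])$. All of this needs real information tying the generic module $G$ to $\Theta$, presumably the input from \cite{Bautista}. ``Testing via the Auslander--Reiten formula against $\Pi$'' does not supply it.

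Second, Step~3 carries the actual content of the theorem and is only asserted. You claim every point of $\N_{\mathfrak u}$ other than $\mu_G$ receives a nonzero map from some $B\in\B$. That is equivalent to saying the quotient of $\heart{\mathfrak u}$ by the localising subcategory generated by $\B$ has a single indecomposable injective, which is exactly what must be proved; ``semistable bricks generate together with $G$'' is not an argument. Moreover, the strategy cannot handle points $I[-1]$ at all. Since $\Hom{\Der{A}}(B,I[-1])=0$ for every module $B$, such a point is never the $\N_{\mathfrak u}$-injective envelope of some $B\in\B$. By Theorem~\ref{mut}, every $I[-1]\in\N_{\mathfrak u}$, special or not, therefore lies in $\M_\Theta$. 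Excluding them needs a separate proof that $\Hom{A}(\V,I)\neq 0$ for all indecomposable injectives $I$. Your Step~1 description of $\filt{\B}$ is also too optimistic: semistable bricks need not lie in tubes (e.g.\ a simple projective-injective at an isolated vertex where $\Theta$ vanishes).

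A more economical route uses injectivity of the bijection in Theorem~\ref{wide}. Show that $\{\mu_G\}$ is a closed (via endofiniteness) rigid set with $\Cogen{G}\cap\mod{A}=\f_\Theta$ and $\C_{\mu_G}\cap\mod{A}=\overline{\f_\Theta}$. This comes down to comparing $\Theta(M)$ with $\Hom{\Der{A}}(M,\mu_G)$ and $\Hom{\Der{A}}(M,\mu_G[1])$ for finite dimensional $M$, and no exhaustion of $\N_{\mathfrak u}$ is then needed.

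Finally, in the local extremality part, ``$\mu_G$ lies in the closure of the adics'' needs proof. One way: each $B\in\B$ labels an arrow starting at $\mathbf t$, so the special points of $\N_{\mathfrak t}$ are mutable, hence isolated. A compactness argument then gives an accumulation point of this infinite set, and that point can only be $\mu_G$.
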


\bibliographystyle{plain}
\bibliography{refs}

\end{document}